\def\bR{\mathbb R}
\def\bE{\mathbb E}
\def\R{\mathbb R}
\newtheorem{Theorem}{Theorem}
\newtheorem{theorem}{Theorem}
\newtheorem{lemma}[Theorem]{Lemma}
\newtheorem{Corollary}[Theorem]{Corollary}
\newtheorem{Assumption}[Theorem]{Assumption}
\newtheorem{remark}{Remark}
\def\P{{\mathbb P}}
\def\AA{{\cal A} }
\def\argmin{\mathop{\rm arg\,min}}
\def\rank{{\rm rank}}
\def\Sum{\overset{n}{\underset{i=1}{\sum}}}
\def\mR{\mathcal{R}}
\def\bmR{\mathbb R^{m_1\times m_2}}
\def \mI{\mathcal{I}}
\begin{document}

\title{Robust Matrix Completion}
\author{\textbf{ Olga Klopp} \\ CREST and MODAL'X\\University Paris Ouest, 92001 Nanterre, France\\\\
\textbf{ Karim Lounici}\\School of Mathematics, Georgia Institute of Technology\\
Atlanta, GA 30332-0160, USA\\\\and\\\\
\textbf{ Alexandre B. Tsybakov} \\CREST-ENSAE, UMR CNRS 9194,\\
3, Av. Pierre Larousse, 92240 Malakoff, France}
\date{}
\maketitle


\begin{abstract}
This paper considers the problem of estimation of a low-rank matrix when most of its entries are not observed and some of the observed entries are corrupted. The observations are noisy realizations of a sum of a low-rank matrix, which we wish to estimate, and a second matrix having  a complementary sparse structure such as elementwise sparsity or columnwise sparsity. We analyze a class of estimators obtained as solutions of a constrained convex optimization problem combining the nuclear norm penalty and a convex relaxation penalty for the sparse constraint.  Our assumptions allow for  simultaneous  presence of random and deterministic patterns in the sampling scheme. We establish rates of convergence for the low-rank component from partial and corrupted observations in the presence of noise and we show that these rates are minimax optimal up to logarithmic factors.
\end{abstract}

%



\section{Introduction}\label{introduction}


In the recent years, there have been a considerable interest in statistical inference for high-dimensional matrices. One particular problem is matrix completion where one observes only a small number $N \ll m_1m_2$ of the entries of a high-dimensional $m_1\times m_2$ matrix $L_0$ of rank $r$ and aims at inferring the missing entries.  In general, recovery of a matrix from a small number of observed entries
 is impossible, but, if the unknown matrix has low rank, then accurate and even exact recovery is possible. In the  noiseless setting, \cite{Candes_Tao,Gross, recht}
 established the following remarkable result: assuming that the matrix $L_0$ satisfies some low coherence condition, this matrix  can be recovered exactly by a constrained nuclear norm minimization with high probability from only $N \gtrsim  r \max\{m_1,m_2\} \log^2(m_1+m_2)$ entries observed uniformly at random.  A more common situation in applications corresponds to
 the noisy setting in which the few available entries are corrupted by noise. Noisy matrix completion has
 been in the focus of several recent studies (see, e.g., \cite{Keshavan,rhode-tsybakov-estimation,KLT11, wainwright-weighted,foygel-serebro-concentration,klopp_general, cai_max}).
 

The matrix completion problem is motivated by a variety of applications. An important question in applications is whether or not  matrix completion procedures are robust to corruptions.  Suppose that we  observe noisy entries of $A_0 = L_0 + S_0$ where $L_0$ is an unknown low-rank matrix and $S_0$ corresponds to some gross/malicious corruptions. We wish to recover $L_0$ but we observe only few entries of $A_0$ and, among those, a fraction happens to be corrupted by~$S_0$. Of course, we do not know which entries are corrupted. It has been shown empirically that uncontrolled and potentially adversarial gross errors affecting only a small portion of observations can be particularly harmful. For example, Xu et al. \cite{Xu_Caramanis_Sanghavi} showed that a very popular matrix completion procedure using nuclear norm minimization can fail dramatically even if $S_0$ contains only a single nonzero column.   It is particularly relevant in applications to recommendation systems where malicious users try to manipulate the outcome of matrix completion algorithms by introducing spurious perturbations $S_0$.  Hence, there is a need for new matrix completion techniques that are robust to the presence of corruptions $S_0$.

With this motivation, we consider the following setting of \textit{robust matrix completion}. Let $A_0\in \mathbb{R}^{m_{1}\times m_{2}}$ be an unknown matrix that can be represented as a sum $A_0=L_0+S_0$ where $L_0$ is a low-rank matrix and $S_0$ is a matrix with some low complexity structure such as entrywise sparsity or columnwise sparsity. We consider the observations $(X_i,Y_i), i=1,\dots,N,$ satisfying the trace regression model
\begin{equation}\label{model}
Y_{i}=\mathrm{tr}(X_{i}^{T}A_{0})+\xi_{i}, \:i=1,\dots,N,
\end{equation}
where $\mathrm{tr}({\rm M})$ denotes the trace of matrix ${\rm M}$. Here, the noise variables $\xi_{i}$ are independent and centered, and $X_{i}$ are $m_1\times m_2$ matrices taking values in the set
\begin{equation}\label{basisUSR}
\mathcal{X} = \left \{e_j(m_1)e_k^{T}(m_2),1\leq j\leq m_1, 1\leq k\leq m_2\right \},
\end{equation}
where $e_l(m)$, $l=1,\dots,m,$ are the canonical basis vectors in $\bR^{m}$. Thus, we observe some entries of matrix $A_0 $ with random noise.  Based on the observations $(X_i,Y_i)$, we wish to obtain accurate estimates of the components $L_0$  and $S_0$ in the high-dimensional setting $N\ll m_1m_2$. Throughout the paper, we assume that $(X_1,\dots,X_n)$ is independent of $(\xi_1,\dots,\xi_n)$.

We assume  that the set of indices $i$ of our $N$ observations is the union of two disjoint components $\Omega$ and $ \tilde \Omega$.  The first component $\Omega$ corresponds to  the ``non-corrupted''  noisy entries of  $L_0$, i.e., to the
 observations, for which the entry of $S_0$ is zero. The second set $\tilde \Omega$ corresponds to the observations, for which the 
  entry of $S_0$ is nonzero.  Given an observation, we do not know whether it belongs to the corrupted or non-corrupted part of the observations and we have 
  $\vert  \Omega\vert+\vert \tilde \Omega\vert=N$, where $\vert  \Omega\vert$ and $\vert \tilde \Omega\vert$ are non-random numbers of non-corrupted and corrupted observations, respectively.

A particular case of this setting is  the matrix decomposition problem where  $N=m_1m_2$, i.e., we observe all entries of $A_0$. Several recent works consider the matrix decomposition problem, mostly in the noiseless setting, $\xi_i \equiv 0$. Chandrasekaran et al.  \cite{Chandra_Parillo_sujay_Willsky}  analyzed the case when the matrix $S_0$ is sparse,  with small number of non-zero entries. They proved that exact recovery of $(L_0,S_0)$ is possible with high probability under additional identifiability conditions. This model was further studied by Hsu et al. \cite{HsuKakadeZhang} who give milder conditions for the exact recovery of $(L_0,S_0)$. Also in the noiseless setting, Candes et al. \cite{Candes_robust} studied the same model but with  positions of  corruptions chosen uniformly at random.  Xu et al. \cite{Xu_Caramanis_Sanghavi} studied a  model, in which the matrix $S_0$ is columnwise sparse with sufficiently small number of non-zero columns. Their method guarantees approximate recovery for the non-corrupted columns of the low-rank component~$L_0$.  Agarwal et al.  \cite{AgarwalNegahbanWainwright} consider a general model, in which the observations are noisy realizations of a linear transformation of 
$A_0$. Their setup includes the matrix decomposition problem and some other statistical models of interest but does 
not cover the matrix completion problem. Agarwal et al. \cite{AgarwalNegahbanWainwright} state a general result on approximate recovery of the pair $(L_0,S_0)$ imposing a ``spikiness condition'' on the low-rank
component $L_0$. Their analysis includes as particular cases both the  entrywise corruptions and the columnwise corruptions.

The robust matrix completion setting, when $N<m_1m_2$, was first considered by   Candes et al. \cite{Candes_robust}  in the noiseless case for entrywise sparse $S_0$. Candes et al. \cite{Candes_robust} assumed that the support of $S_0$ is selected uniformly at random and that $N$ is equal to $0.1m_1m_2$ or to some other fixed fraction of $m_1m_2$. Chen et al. \cite{ChenXuCaramanisSanghavi} considered also the noiseless case but with columnwise sparse $S_0$.  They proved that the same procedure as in \cite{Chandra_Parillo_sujay_Willsky} can recover the non-corrupted columns of $L_0$  and identify the set of indices of the corrupted columns.  This was done under the following assumptions: the locations of the non-corrupted columns are chosen uniformly at random; $L_0$ satisfies some sparse/low-rank incoherence condition; the total number of corrupted columns is small and a sufficient number of non-corrupted entries is observed. More recently, Chen et al. \cite{chen_jalali} and Li \cite{li} considered noiseless robust matrix completion with  entrywise sparse $S_0$. They proved exact recovery of the low-rank component  under an  incoherence condition on $L_0$ and some additional assumptions on the number of corrupted observations.

To the best of our knowledge, the present paper is the first study of robust matrix completion with noise. Our analysis is general and covers in particular the cases of 
columnwise sparse corruptions and entrywise sparse corruptions.  It is important to note that we do not require strong assumptions on the unknown matrices, such as the incoherence condition, or additional restrictions on the number of corrupted observations as in the noiseless case. This is due to the fact that we do not aim at exact recovery of the unknown matrix.  We emphasize that we do not need to know the rank of $L_0$ nor the sparsity level of $S_0$. We do not need to observe all entries of $A_0$ either. We only need to know an upper bound on the maximum of the absolute values of the entries of $L_0$ and $S_0$. Such information is often available in applications; for example, in recommendation systems, this bound is just the maximum rating. Another important point is that our method allows us to consider quite general and unknown sampling distribution. All the previous works on noiseless robust matrix completion assume the uniform sampling distribution. However, in practice the observed entries 
 are not guaranteed to follow the uniform scheme and the sampling distribution is not exactly known.

 We establish oracle inequalities  for the cases of entrywise sparse and columnwise sparse $S_0$.  For example, in the case of columnwise corruptions,
  we prove the following bound on the normalized Frobenius error of our estimator $(\hat L, \hat S)$ of $(L_0, S_0)$: with high probability
$$
\frac{\|\hat L - L_0\|_2^2}{m_1m_2}+\dfrac{\Vert  S_0-\hat S\Vert_2^{2}}{m_1m_2} \lesssim \dfrac{r\,\max(m_1,m_2)+\vert \tilde \Omega\vert}{\vert \Omega\vert}+\dfrac{s}{m_2}
$$
 where the symbol $\lesssim$ means that the inequality holds up to a multiplicative absolute constant and a factor, which is logarithmic  in $m_1$ and $m_2$. Here, $r$ denotes the rank of $L_0$, and $s$ is 
 the number of corrupted columns.
  Note that, when the number of corrupted columns $s$ and the proportion of corrupted observations $\vert \tilde \Omega\vert/ \vert  \Omega\vert$ are small, this bound implies that $O(r\,\max(m_1,m_2))$ observations are enough for successful and robust to corruptions matrix completion.  We also show that, both under the columnwise corruptions and entrywise corruptions, the obtained rates of convergence are minimax optimal up to logarithmic factors.

This paper is organized as follows. Section \ref{notations} contains the notation and definitions. We introduce our estimator in Section \ref{method} and we state the assumptions on the sampling scheme in Section \ref{sampling}. Section \ref{upper bounds} presents a general upper bound for the estimation error. In Sections \ref{column-wise_sparsity} and \ref{element-wise_sparsity}, we specialize this bound to the settings with columnwise corruptions and entrywise corruptions, respectively. In Section \ref{lower_bounds}, we prove that our estimator is minimax rate optimal up to a logarithmic factor. The Appendix contains the proofs.

\section{Preliminaries}\label{Preliminaries}

\subsection{Notation and definitions}\label{notations}

 
 {\it General notation.} For any set $I$, $\vert I\vert$ denotes its cardinality and $\bar I$ its complement. We write $a\vee b=\max(a,b)$ and $a\wedge b=\min(a,b)$. 

For a matrix $A$, $A^{i}$ is its $i$th column and $A_{ij}$ is its $(i,j)-$th entry. Let $I\subset \{1,\dots m_1\}\times\{1,\dots m_2\}$ be a subset of indices. Given a matrix $A$, we denote by $A_{I}$ its restriction on $I$, that is, $\left (A_{I}\right )_{ij}=A_{ij}$ if $(i,j)\in I$ and $\left (A_{I}\right )_{ij}=0$ if $(i,j)\not\in I$. 
In what follows, $\mathbf{Id}$ denotes the matrix of ones, i.e., $\mathbf{Id}_{ij}=1$ for any $(i,j)$ and ${\bf 0}$ denotes the
zero matrix, i.e.,  ${\bf 0}_{ij}=0$ for any $(i,j)$.

 For any $p\geq 1$, we denote by $\Vert \cdot\Vert_{p}$ the usual $l_p-$norm.  Additionally, we use the following matrix norms: $\Vert A\Vert_{*}$ is the nuclear norm (the sum of singular values), $\Vert A\Vert$ is the operator norm (the largest singular value), $\Vert A\Vert_{\infty}$ is the largest absolute value of the entries:  
 $$\|A\|_{\infty} = \max_{1\leq j \leq m_1,1\leq k \leq m_2}|A_{jk}|,$$
 the norm $\Vert A\Vert_{2,1}$ is the sum of $l_2$ norms of the columns of $A$ and $\Vert A\Vert_{2,\infty}$ is the largest $l_2$ norm of the columns of $A$:
$$
\|A\|_{2,1} =\sum_{k=1}^{m_2} \|A^{k}\|_2\quad \text{and}\quad \|A\|_{2,\infty} =\max_{1\leq k\leq m_2} \|A^{k}\|_2.
$$ 
 The inner product of matrices $A$ and $B$ is defined by $\langle  A,B  \rangle = \mathrm{tr}(AB^\top)$. 
 
%
\medskip
\textit{Notation related to corruptions.}
We first introduce the index sets $\mI$ and $\tilde\mI$. These are subsets of $\{1,\dots,m_1\}\times \{1,\dots,m_2\}$ that  are defined differently for the settings with columnwise sparse and entrywise sparse corruption matrix $S_0$. 

For the columnwise sparse matrix $S_0$, we define  
\begin{equation}\label{index_set_columns_1}
  \tilde \mI=\{1,\dots,m_1\}\times J \end{equation}
   where $J\subset \{1,\dots,m_2\}$ is the set of indices of the non-zero columns of $S_0$. For the entrywise sparse matrix $S_0$, we denote by $\tilde \mI$ 
   the set of indices of the non-zero elements of $S_0$. In both settings, $\mI$ denotes the complement of $\tilde \mI$.
   
  Let $\mR\,:\,\mathbb{R}^{m_1\times m_2}\rightarrow\mathbb{R}_{+}$ be a norm that will be used as a regularizer relative to the corruption matrix $S_0$.
The associated dual norm is defined by the relation
\begin{equation}
\mR^{*}(A)=\underset{\mR(B)\leq 1}{\sup}\langle A,B\rangle.
\end{equation}
Let $\vert A\vert$ denote the matrix whose entries are the absolute values of the entries of matrix $A$. The norm $\mR(\cdot)$ is called \textit{absolute} if it depends only on the absolute values of the entries of $A$:
$$\mR(A)=\mR(\vert A\vert).$$
For instance,  the $l_p$-norm and the $\|\cdot\|_{2,1}$-norm are absolute.  We call $\mR(\cdot)$ \textit{monotonic} if $\vert A\vert\leq \vert B\vert$ implies $\mR(A)\leq \mR(B)$. Here and below, the inequalities between matrices are understood as entry-wise inequalities. Any absolute norm is monotonic and vice versa (see, e.g., \cite{bauer}). 

\medskip
\textit{Specific notation.}
\begin{itemize}
\item We set $d=m_1+m_2$, $m=m_1\wedge m_2$, and $M=m_1\vee m_2$.
\item  Let $\{\epsilon_i\}_{i=1}^{n}$ be a sequence of i.i.d. Rademacher random variables. 
We define the following random variables called the stochastic terms:
\begin{equation*}\label{stoch1}
\Sigma_R=\dfrac{1}{n} \sum_{i\in \Omega} \epsilon_i X_i,\quad\Sigma=\dfrac{1}{N} \sum_{i\in \Omega} \xi_iX_i,\quad\text{and}\qquad W=\dfrac{1}{N} \sum_{i\in \Omega} X_i.
\end{equation*}
\item We denote by $r$ the rank of matrix $L_0$.
\item We denote by $N$ the number of observations, and by $n=\vert \Omega\vert$  the number of non-corrupted observations. The number of corrupted observations is $\vert \tilde\Omega\vert = N-n$. We set $\ae=N/n$. 
\item We use the generic symbol $C$ for positive constants that
do not depend on $n, m_1, m_2,r,s$ and can take different values at different appearances.
\end{itemize}

 
\subsection{Convex relaxation for robust matrix completion}\label{method}

For the usual matrix completion, i.e., when the corruption matrix $S_0=\bf{0}$, one of the most popular methods of solving the problem is based on constrained nuclear norm minimization. For example,  the following constrained matrix Lasso estimator is introduced in \cite{klopp_general}: 
\begin{equation*} 
\hat{A}\in\underset{
\left\Vert A\right\Vert_{\infty}\leq \mathbf{a}}{\argmin}\left \{\dfrac{1}{n}\Sum \left (Y_i-\left\langle X_i,A\right\rangle\right )^{2}+\lambda \Vert A\Vert_*\right \},
\end{equation*}
where $\lambda>0$ is a regularization parameter and $\mathbf{a}$ is an upper bound on $\left\Vert L_0\right\Vert_{\infty}$.

To account for the presence of non-zero corruptions $S_0$, we introduce an additional norm-based penalty that should be chosen depending on the structure of $S_0$.  
We consider the following estimator $(\hat{L},\hat S)$ of the pair $(L_0,S_0)$:
\begin{equation}\label{estimator}
(\hat{L},\hat S)\in\underset{^{\left\Vert L\right\Vert_{\infty}\leq \mathbf{a}}_{\left\Vert S\right\Vert_{\infty}\leq \mathbf{a}}
}{\argmin}\left \{\dfrac{1}{N}\sum^{N}_{i=1} \left (Y_i-\left\langle X_i,L+S\right\rangle\right )^{2}+\lambda_{1} \Vert L\Vert_* +\lambda_{2}\mathcal{R}(S)\right \}.
\end{equation}
Here $\lambda_1>0$ and $\lambda_2>0$ are regularization parameters and $\mathbf{a}$ is an upper bound on $\left\Vert L_0\right\Vert_{\infty}$ and
 $\left\Vert S_0\right\Vert_{\infty}$.  Note that this definition and all the proofs
  can be easily adapted to the setting with two different upper bounds for   $\left\Vert L_0\right\Vert_{\infty}$ and
   $\left\Vert S_0\right\Vert_{\infty}$ as it can be the case in some applications. Thus, the results of the paper extend to this case as well.

For the following two key examples of sparsity structure of $S_0$, we consider specific regularizers  $\mR$.
 \begin{itemize}
 \item \textbf{Example 1.} Suppose that $S_0$ is {\it columnwise sparse}, that is, it has a small number $s<m_2$ of non-zero columns.
  We use the $\Vert \cdot\Vert_{2,1}$-norm regularizer for such a sparsity structure: $\mR(S)=\Vert S\Vert_{2,1}.$ The associated dual norm is $\mR^{*}(S)=\Vert S\Vert_{2,\infty}$. 

 \item \textbf{Example 2.} Suppose now that $S_0$ is {\it entrywise sparse}, that is, that it has $s\ll m_1m_2$ non-zero entries. The usual choice of regularizer for such a sparsity structure is the $l_1$ norm: $\mR(S)=\Vert S\Vert_{1}$. The associated dual norm is $\mR^{*}(S)=\Vert S \Vert_{\infty}$. 
 \end{itemize}

 In these two examples, the regularizer $\mR$  is \textit{decomposable} with respect to a properly chosen 
  set of indices $I$. That is, for any matrix $A\in \bmR$ we have
 \begin{equation}\label{decomposability}
 \mR(A)=\mR(A_{I})+\mR(A_{\bar I}).
 \end{equation}
   For instance, the $\Vert \cdot\Vert_{2,1}$-norm is decomposable with respect to any set $I$ such that
   \begin{equation}\label{index_set_columns}
    I=\{1,\dots,m_1\}\times J\end{equation}
    where $J\subset \{1,\dots,m_2\}$.
The usual $l_1$ norm is decomposable with respect to any subset of indices
$I$.


\subsection{Assumptions on the sampling scheme and on the noise}\label{sampling}

In the literature on the usual matrix completion ($S_0=\bf{0}$), it is commonly assumed that the observations  $X_i$ are i.i.d.  For robust matrix completion, it is more realistic to assume the presence of 
two subsets in the observed $X_i$. The first subset $\{X_{i},\,i\in \Omega\}$ is a collection of i.i.d. random matrices with some unknown distribution on 
 \begin{equation}\label{basisUSR_random}
 \mathcal{X}' = \left \{e_j(m_1)e_k^{T}(m_2),(j,k)\in \mathcal{I}\right \}. 
 \end{equation}
 These $X_i$'s are of the same type as in the usual matrix completion. They are the $X$-components of non-corrupted observations (recall that the entries of $S_0$ corresponding to indices in $\mathcal{I}$ are equal to zero).
On this non-corrupted part of observations, we require some  assumptions on the sampling distribution
 (see Assumptions \ref{assPi}, \ref{L}, \ref{marginal_columns}, and \ref{marginal_sparse} below). 
 
 The second subset $\{X_{i},\;i\in \tilde \Omega\}$  is a collection of matrices with values in
 \begin{equation*} 
 \mathcal{X}'' = \left \{e_j(m_1)e_k^{T}(m_2),(j,k)\in  \tilde\mI\right \}.
 \end{equation*}
 These are the $X$-components of corrupted observations. Importantly, we \textit{make no assumptions} on how they  are sampled.  Thus, for any $i\in \tilde\Omega$, we have that the index of the corresponding entry belongs to $\tilde \mI$ and we make no further assumption.
 If we take the example of recommendation systems, this partition into  $\{X_{i},\,i\in \Omega\}$ and $\{X_{i},\,i\in \tilde \Omega\}$ accounts for the difference in behavior of  normal and malicious users.

   As there is no hope for  recovering the unobserved entries of $S_0$, one should consider only the estimation of the restriction of $S_0$ to $\tilde \Omega$. This is equivalent to assume that we estimate the whole $S_0$ when all unobserved entries of $S_0$  are equal to zero, cf. \cite{ChenXuCaramanisSanghavi}. This assumption will be done throughout the paper. 
  
    For $i\in \Omega$, we suppose that
 $X_i$ are i.i.d realizations of a random matrix $X$ having distribution $\Pi$ on the set $ \mathcal{X}'$.
   Let $\pi_{jk}=\mathbb{P}\left (X=e_j(m_1)e_k^{T}(m_2)\right )$ be the probability to observe the $(j,k)$-th entry. One of the particular settings of this problem is the case of the uniform on $\mathcal{X}'$ distribution $\Pi$. It was previously considered in the context of noiseless robust matrix completion, see, e.g., \cite{ChenXuCaramanisSanghavi}.  We consider here a more general   sampling model. In particular, 
   we suppose that any non-corrupted element is sampled with positive probability:
\begin{Assumption}\label{assPi}
There exists a positive constant $\mu\geq 1$ such that, for any $(j,k)\in \mathcal{I}$, $$\pi_{jk}\geq (\mu \vert \mI\vert)^{-1}.$$
\end{Assumption}
If $\Pi$ is the of uniform distribution on $\mathcal{X}'$ we have $\mu=1$.
For $A\in\mathbb{R}^{m_1\times m_2}$  set 
$$ \Vert A\Vert _{L_2(\Pi)}^{2}=\bE\left(\langle A,X\rangle^{2}\right).
$$
 Assumption \ref{assPi} implies that
\begin{equation}\label{ass1}
 \Vert A\Vert^{2} _{L_2(\Pi)}\geq (\mu\,\vert \mI\vert)^{-1}\Vert A_{\mathcal{I}}\Vert^{2} _{2}.
 \end{equation}
Denote by $\pi_{\cdot k}=\underset{j=1}{\overset{m_1}{\Sigma}}\pi_{jk}$ the probability to observe an element from the $k$-th column and by $\pi_{j\cdot }=\underset{k=1}{\overset{m_2}{\Sigma}}\pi_{jk}$ the probability to observe an element from the $j$-th row. 
The following assumption requires that no column and no row is sampled with too high probability.
 \begin{Assumption}\label{L}
 There exists a positive constant  $L\geq 1$ such that
 \begin{equation*} 
 \underset{i,j}{\max}\left (\pi_{\cdot k},\pi_{j\cdot }\right )\leq L/m.
 \end{equation*}
 \end{Assumption}  
 
 This assumption will be used in Theorem \ref{thm2} below. In Sections \ref{column-wise_sparsity} and \ref{element-wise_sparsity}, we apply Theorem \ref{thm2} to the particular cases of columnwise sparse and entrywise sparse corruptions. There, we will need more restrictive assumptions on the sampling distribution (see Assumptions \ref{marginal_columns} and \ref{marginal_sparse}).
 
 We assume below that the noise variables $\xi_i$ are sub-gaussian:
     
   \begin{Assumption}\label{noise} There exist positive constants
   $\sigma$ and $c_1$ such that
      $$\underset{i=1,\dots,n}{\max}\bE\exp\left (\xi^{2}_i/\sigma^{2}\right )< c_1. $$
  \end{Assumption}


\section{Upper bounds for general regularizers}\label{upper bounds}

 In this section we state our main result which applies to a general convex program \eqref{estimator} where $\mR$ is an absolute norm and a decomposable regularizer.  In the next sections, we consider in detail two particular choices, $\mR(\cdot)=\|\cdot\|_{1}$ and $\mR(\cdot)=\|\cdot\|_{2,1}$. Introduce the notation:
\begin{equation}\label{psi}
\begin{split}
\Psi_{1}&= \mu^{2}\,m_1m_2\,r\,\left (\ae^{2}\lambda^{2}_1+\mathbf{a}^{2}\left ( \bE\left ( \Vert\Sigma_R\Vert\right )\right )^{2}\right )+\mathbf{a}^{2}\,\mu\,\sqrt{\dfrac{\log(d)}{n}},\\
\Psi_2&= \mu\,\mathbf{a}\,\mR(\mathbf{Id}_{\tilde{\Omega}})\left (\dfrac{\lambda_2\,\mathbf{a}}{\lambda_1}\bE\left ( \Vert\Sigma_R\Vert\right )+\ae\lambda_2+\mathbf a\,\bE\left ( \mR^{*}(\Sigma_R)\right )\right ),\\
\Psi_3&=\dfrac{\mu\,\vert \tilde\Omega\vert
\left (\mathbf a^{2}+\sigma^{2}\log(d)\right  )}{N}\left (\dfrac{\mathbf{a}\,\bE\left ( \Vert\Sigma_R\Vert\right )}{\lambda_1}+\dfrac{\mathbf{a}\,\bE \left (\mR^{*}(\Sigma_R)\right )}{\lambda_2}+\ae\right )+\dfrac{\mathbf{a}^{2}\vert \tilde{\mI}\vert}{m_1m_2},\\
\Psi_4&=\mu\,\mathbf{a}^{2}\,\sqrt{\dfrac{\log(d)}{n}}+\mu\,\mathbf{a}\,\mR(\mathbf{Id}_{\tilde{\Omega}})\left [\ae\,\lambda_{2}+ \mathbf a\,\bE\left ( \mR^{*}(\Sigma_R)\right )\right ]\\&\hskip 4 cm+\left [ \dfrac{\mathbf a\,\bE\left (\mR^{*}(\Sigma_R)\right )}{\lambda_2}+\ae\right ]\frac{\mu\,\vert \tilde\Omega\vert\left (\mathbf a^{2}+\sigma^{2}\log(d)\right  )}{N}
\end{split}
\end{equation}
where $d=m_1+m_2$.

\begin{theorem}\label{thm2}
Let $\mR$ be an absolute norm and a decomposable regularizer. Assume that $ \left\Vert L_0\right\Vert_{\infty}\leq \mathbf{a}$, $ \left\Vert S_0\right\Vert_{\infty}\leq \mathbf{a}$ for some constant $\mathbf{a}$ and let Assumptions \ref{assPi} - \ref{noise} be satisfied.
Let $\lambda_1>4\left\Vert \Sigma\right\Vert$, and $\lambda_2\geq 4\left (\mR^{*}(\Sigma)+2\mathbf{a}\mR^{*}(W)\right )$.  Then, with probability at least $1-4.5\,d^{-1}$,
\begin{equation}\label{upper_bound}
 \begin{split}
 \dfrac{\Vert  L_0-\hat L\Vert_2^{2}}{m_1m_2}+\dfrac{\Vert S_0-\hat S\Vert_2^{2}}{m_1m_2}&\leq C\,\left \{ \Psi_1+ \Psi_2+\Psi_3\right \}
 \end{split}
 \end{equation}
 where $C$ is an absolute constant. Moreover, with the same probability,
\begin{equation}\label{upper_bound1}
\begin{split}
\dfrac{\Vert \hat S_{\mI}\Vert^{2}_2}{\vert\mI\vert}\leq C\Psi_4.
\end{split}
\end{equation}
\end{theorem}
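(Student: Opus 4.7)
The plan is to follow the standard paradigm for penalized least-squares M-estimators with composite regularizers: derive a basic inequality from optimality, invoke decomposability to obtain a cone condition on the error, and convert the empirical quadratic form into a Frobenius error via a restricted strong convexity argument. The specificity of the present setting is the separate treatment of the corrupted block $\tilde\Omega$, whose contribution is absorbed into $\Psi_3$.

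\emph{Basic inequality and stochastic terms.} Optimality of $(\hat L,\hat S)$ tested against the feasible point $(L_0,S_0)$, combined with $Y_i=\langle X_i,L_0+S_0\rangle+\xi_i$, produces
\begin{equation*}
\frac{1}{N}\sum_{i=1}^N \langle X_i,\Delta\rangle^2 \;\le\; \frac{2}{N}\sum_{i=1}^N \xi_i \langle X_i,\Delta\rangle + \lambda_1\bigl(\|L_0\|_*-\|\hat L\|_*\bigr) + \lambda_2\bigl(\mR(S_0)-\mR(\hat S)\bigr),
\end{equation*}
where $\Delta=\Delta_L+\Delta_S$ with $\Delta_L=\hat L-L_0$ and $\Delta_S=\hat S-S_0$. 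I would split the noise sum over $\Omega\cup\tilde\Omega$. On $\Omega$, duality of $\|\cdot\|_*$ and of $\mR$ together with $\lambda_1\ge4\|\Sigma\|$ and $\lambda_2\ge4\mR^*(\Sigma)$ gives $\tfrac{2}{N}\sum_{i\in\Omega}\xi_i\langle X_i,\Delta_L\rangle\le\tfrac{\lambda_1}{2}\|\Delta_L\|_*$ and $\tfrac{2}{N}\sum_{i\in\Omega}\xi_i\langle X_i,\Delta_S\rangle\le\tfrac{\lambda_2}{2}\mR(\Delta_S)$. On $\tilde\Omega$, the box constraints force $|\langle X_i,\Delta\rangle|\le 4\mathbf{a}$, and a sub-Gaussian/Bernstein tail over the $|\tilde\Omega|$ indices produces the component $\mu|\tilde\Omega|(\mathbf{a}^2+\sigma^2\log d)/N$ of $\Psi_3$.

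\emph{Cone condition and restricted strong convexity.} The SVD-based decomposition of the nuclear norm around $L_0$ yields $\|L_0\|_*-\|\hat L\|_*\le \|P_\Lambda\Delta_L\|_*-\|P_{\Lambda^\perp}\Delta_L\|_*$ with $\mathrm{rank}(P_\Lambda\Delta_L)\le 2r$, while decomposability \eqref{decomposability} of $\mR$ combined with $(S_0)_\mI=0$ gives $\mR(S_0)-\mR(\hat S)\le \mR((\Delta_S)_{\tilde\mI})-\mR((\Delta_S)_\mI)$. Substituting into the basic inequality produces a cone inequality in which $\|P_{\Lambda^\perp}\Delta_L\|_*$ and $\mR((\Delta_S)_\mI)$ are dominated by $\|P_\Lambda\Delta_L\|_*\le\sqrt{2r}\|\Delta_L\|_2$ and $\mR((\Delta_S)_{\tilde\mI})\le 2\mathbf{a}\,\mR(\mathbf{Id}_{\tilde\Omega})$ (monotonicity of $\mR$), up to the corruption residual. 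To lower-bound the LHS of the basic inequality by a Frobenius norm, a symmetrization/contraction argument on the $\Omega$-sum (valid because $\|\Delta\|_\infty\le 4\mathbf{a}$) yields
\begin{equation*}
\frac{1}{n}\sum_{i\in\Omega}\langle X_i,\Delta\rangle^2 \;\ge\; \|\Delta\|_{L_2(\Pi)}^2 - C\mathbf{a}\bigl(\bE\|\Sigma_R\|\cdot\|\Delta_L\|_* + \bE\mR^*(\Sigma_R)\cdot\mR(\Delta_S)\bigr) - C\mathbf{a}^2\sqrt{\tfrac{\log d}{n}},
\end{equation*}
and Assumption \ref{assPi} converts $\|\Delta\|_{L_2(\Pi)}^2$ into $(\mu|\mI|)^{-1}\|\Delta_\mI\|_2^2$; the missing $\tilde\mI$-block is absorbed by the crude estimate $\|\Delta_{\tilde\mI}\|_2^2\le 4\mathbf{a}^2|\tilde\mI|$, producing the last term of $\Psi_3$. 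Combining this with the cone inequality and rearranging yields \eqref{upper_bound}.

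The bound \eqref{upper_bound1} follows by tracing the same argument but retaining, rather than discarding, the non-negative term $\tfrac{\lambda_2}{2}\mR((\Delta_S)_\mI)=\tfrac{\lambda_2}{2}\mR(\hat S_\mI)$ on the left-hand side of the cone inequality, and then converting $\mR(\hat S_\mI)$ into a Frobenius lower bound via $\|\hat S_\mI\|_\infty\le 2\mathbf{a}$ together with monotonicity of $\mR$. The most delicate step is the Talagrand-type concentration underlying the empirical-to-population conversion: its deviation must be small enough to be absorbed back into the Frobenius term on the LHS, which is precisely what forces the scalings $\lambda_1\ge 4\|\Sigma\|$ and $\lambda_2\ge 4(\mR^*(\Sigma)+2\mathbf{a}\mR^*(W))$ and produces the stochastic quantities $\bE\|\Sigma_R\|$ and $\bE\mR^*(\Sigma_R)$ entering $\Psi_1,\Psi_2,\Psi_4$.
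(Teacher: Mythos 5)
Your outline reproduces the standard single-test analysis, but it misses the central difficulty of this theorem: the data-fidelity term depends on $L$ and $S$ only through their sum, so the basic inequality obtained by testing optimality at $(L_0,S_0)$, combined with restricted strong convexity, controls only $\Vert \Delta L+\Delta S\Vert_{L_2(\Pi)}^{2}$. That quantity does not control $\Vert\Delta L\Vert_2^{2}+\Vert\Delta S\Vert_2^{2}$, since mass can be shifted between $\hat L$ and $\hat S$ without changing the fit. The paper resolves this by a \emph{second} optimality test, $\mathcal F(\hat L,\hat S)\leq\mathcal F(\hat L,S_0)$ (varying only $S$), which produces a quadratic form in $\Delta S$ alone; applying the restricted-strong-convexity lemma to $\Delta S\in\mathcal C(\delta)$ then gives $\Vert\Delta S\Vert_{L_2(\Pi)}^{2}\leq C\Psi_4/\mu$ (Lemma \ref{bound_delta_S}), and only after that can one write $\Vert\Delta L\Vert_{L_2(\Pi)}^{2}\leq 2\Vert\Delta L+\Delta S\Vert_{L_2(\Pi)}^{2}+2\Vert\Delta S\Vert_{L_2(\Pi)}^{2}$ to extract the low-rank error. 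Your proposal contains no such step, and without it neither the separate bound on $\Vert S_0-\hat S\Vert_2^2$ in \eqref{upper_bound} nor \eqref{upper_bound1} follows. A telltale symptom is that your argument never uses the $2\mathbf{a}\mR^{*}(W)$ part of the condition on $\lambda_2$: that term exists precisely to dominate the cross term $\frac1N\sum_{i\in\Omega}\langle X_i,\Delta L\rangle\langle X_i,\Delta S\rangle$ arising in the second test, via $\mR^{*}\bigl(\frac1N\sum_{i\in\Omega}\langle X_i,\Delta L\rangle X_i\bigr)\leq 2\mathbf{a}\mR^{*}(W)$ (Lemma \ref{lem-44}). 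Deriving the cone condition on $\Delta S$ only from the first test, as you do, leaves a residual proportional to $\lambda_1\sqrt{r}\Vert\Delta L\Vert_2/\lambda_2$ inside $\mR(\Delta S_{\mI})$, which contaminates the constraint set $\mathcal B(\delta_1,\delta_2)$ and breaks the bookkeeping that yields $\Psi_2$ and $\Psi_3$.

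Your route to \eqref{upper_bound1} is also flawed: retaining $\tfrac{\lambda_2}{2}\mR(\hat S_{\mI})$ and converting it to a Frobenius bound via $\Vert\hat S_{\mI}\Vert_\infty\leq 2\mathbf{a}$ works for $\mR=\Vert\cdot\Vert_1$ but not for $\mR=\Vert\cdot\Vert_{2,1}$, where $\Vert A\Vert_2^{2}\leq\Vert A\Vert_{2,\infty}\Vert A\Vert_{2,1}\leq 2\mathbf{a}\sqrt{m_1}\,\Vert A\Vert_{2,1}$ costs an extra factor $\sqrt{m_1}$ relative to $\Psi_4$. The paper instead obtains \eqref{upper_bound1} from the $L_2(\Pi)$ bound on $\Delta S$ together with Assumption \ref{assPi} and the identity $\Delta S_{\mI}=-\hat S_{\mI}$, which is norm-independent. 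Your restricted-strong-convexity display is also stated too loosely (a uniform multiplicative bound in $\Vert\Delta_L\Vert_*$ and $\mR(\Delta_S)$ requires first confining the error to the constrained sets $\mathcal D(\tau,\kappa)$ and $\mathcal B(\delta_1,\delta_2)$ and running a peeling argument over $\Vert\cdot\Vert_{L_2(\Pi)}$), but that is repairable; the missing second optimality test and the treatment of $W$ are not.
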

 The term $\Psi_{1}$ in \eqref{upper_bound} corresponds to the estimation error associated with matrix completion of a rank $r$ matrix. The second and the third terms account for the error induced by corruptions.
In the next two sections we apply Theorem~\ref{thm2} to the settings with the entrywise sparse and columnwise sparse corruption matrices  $S_0$.

\section{Columnwise sparse corruptions}\label{column-wise_sparsity}

In this section, we assume that that $S_0$ has at most $s$ non-zero columns,
 and $s\leq m_2/2$.  We use here the $\Vert \cdot\Vert_{2,1}$-norm regularizer $\mR$.  Then, the convex program~\eqref{estimator} takes form
\begin{equation}\label{estimator_column}
(\hat{L},\hat S)\in\underset{^{\left\Vert L\right\Vert_{\infty}\leq \mathbf{a}}_{\left\Vert S\right\Vert_{\infty}\leq \mathbf{a}}
}{\argmin}\left \{\dfrac{1}{N}\sum^{N}_{i=1} \left (Y_i-\left\langle X_i,L+S\right\rangle\right )^{2}+\lambda_{1} \Vert L\Vert_1 +\lambda_{2}\Vert S\Vert_{2,1}\right \}.
\end{equation}
 Since $S_0$ has at most $s$ non-zero columns, we have 
 $\vert \tilde{\mI}\vert=m_1s$. Furthermore, by the Cauchy-Schwarz  inequality,  $\Vert\mathbf{Id}_{\tilde{\Omega}}\Vert_{2,1}\leq \sqrt{s\vert \tilde \Omega\vert}$. Using these remarks we replace $\Psi_2$, $\Psi_3$ and $\Psi_4$  by the larger quantities
\begin{equation*}
\begin{split}
\Psi'_2&= \mu\,\mathbf{a}\,\sqrt{s\vert \tilde \Omega\vert}\left (\dfrac{\mathbf a\,\lambda_2}{\lambda_1}\bE\left ( \Vert\Sigma_R\Vert\right )+\ae\lambda_2+\mathbf a\,\bE \Vert\Sigma_R\Vert_{2,\infty}\right ),\\
\Psi'_3&=\dfrac{\mu\,\vert \tilde\Omega\vert
\left (\mathbf a^{2}+\sigma^{2}\log(d)\right  )}{N}\left (\dfrac{\mathbf a\bE\left ( \Vert\Sigma_R\Vert\right )}{\lambda_1}+\dfrac{\mathbf a\bE \Vert\Sigma_R\Vert_{2,\infty}}{\lambda_2}+\ae\right )+\dfrac{\mathbf{a}^{2}s}{m_2},
\\
\Psi'_4&=\mu\,\mathbf{a}^{2}\,\sqrt{\dfrac{\log(d)}{n}}+\mu\,\mathbf{a}\,\sqrt{s\vert \tilde \Omega\vert}\left [\ae\,\lambda_{2}+ \mathbf a\,\bE\Vert\Sigma_R\Vert_{2,\infty}\right ]\\&\hskip 5 cm+\left [ \dfrac{\mathbf a\,\bE\Vert\Sigma_R\Vert_{2,\infty}}{\lambda_2}+\ae\right ]\frac{\mu\,\vert \tilde\Omega\vert\left (\mathbf a^{2}+\sigma^{2}\log(d)\right  )}{N}.
\end{split}
\end{equation*}
 Specializing Theorem \ref{thm2} to this case yields the following corollary.
 
 
\begin{Corollary}\label{corollary_column_1}
Assume that $\left\Vert L_{0}\right\Vert_{\infty}\leq \mathbf{a}$ and $\left\Vert S_{0}\right\Vert_{\infty}\leq \mathbf{a}$. 
Let the regularization parameters $(\lambda_1,\lambda_2)$ satisfy 
$$\lambda_1>4\left\Vert \Sigma\right\Vert\;\text{and} \;\lambda_2\geq 4\left (\Vert\Sigma\Vert_{2,\infty}+2\mathbf{a}\Vert W\Vert_{2,\infty}\right ).
$$ 
Then, with probability at least $1-4.5\,d^{-1}$, for any solution $(\hat L,\hat S)$ of the convex program \eqref{estimator_column} with such regularization parameters $(\lambda_1,\lambda_2)$ we have
\begin{equation*}
 \begin{split}
 \dfrac{\Vert L_0-\hat L\Vert_2^{2}}{m_1m_2}+\dfrac{\Vert  S_0-\hat S\Vert_2^{2}}{m_1m_2}&\leq C\,\left \{ \Psi_1+ \Psi'_2+\Psi'_3\right \}.
 \end{split}
 \end{equation*}
 where $C$ is an absolute constant. Moreover, with the same probability,
\begin{equation*}
\begin{split}
\dfrac{\Vert \hat S_{\mI}\Vert^{2}_2}{\vert\mI\vert}\leq C\Psi'_4.
\end{split}
\end{equation*}
\end{Corollary}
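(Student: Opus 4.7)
The plan is to derive this as a direct specialization of Theorem~\ref{thm2} with $\mathcal{R}(\cdot) = \|\cdot\|_{2,1}$, checking the three structural prerequisites and then replacing the generic quantities in $\Psi_2, \Psi_3, \Psi_4$ by their $\|\cdot\|_{2,1}$-specific upper bounds.

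First I would verify that $\|\cdot\|_{2,1}$ satisfies the hypotheses of Theorem~\ref{thm2}. Absoluteness is immediate since the column $\ell_2$ norms, and hence their sum, depend only on the absolute values of the entries. Decomposability with respect to the index set $\tilde{\mathcal{I}} = \{1,\dots,m_1\}\times J$ from \eqref{index_set_columns_1} follows because for any $A$ the columns indexed by $J$ and by $\bar J$ are disjoint, so $\|A\|_{2,1} = \sum_{k \in J}\|A^{k}\|_2 + \sum_{k \in \bar J}\|A^{k}\|_2 = \|A_{\tilde{\mathcal{I}}}\|_{2,1} + \|A_{\mathcal{I}}\|_{2,1}$. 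The dual of $\|\cdot\|_{2,1}$ is $\|\cdot\|_{2,\infty}$, which handles the conditions $\lambda_2 \ge 4(\mathcal{R}^*(\Sigma) + 2\mathbf{a}\mathcal{R}^*(W))$ from Theorem~\ref{thm2}.

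Next I would compute the two geometric quantities that appear in $\Psi_2, \Psi_3, \Psi_4$. Since $S_0$ has at most $s$ nonzero columns, $\tilde{\mathcal{I}} = \{1,\dots,m_1\}\times J$ with $|J|\le s$, giving $|\tilde{\mathcal{I}}|=m_1 s$ and thus $\mathbf{a}^2|\tilde{\mathcal{I}}|/(m_1 m_2)=\mathbf{a}^2 s/m_2$. For $\mathcal{R}(\mathbf{Id}_{\tilde\Omega})$, write $n_k = |\tilde\Omega \cap (\{1,\dots,m_1\}\times\{k\})|$, so that the $k$-th column of $\mathbf{Id}_{\tilde\Omega}$ has $\ell_2$ norm $\sqrt{n_k}$ and is nonzero only for $k\in J$. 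Cauchy--Schwarz then gives
\begin{equation*}
\|\mathbf{Id}_{\tilde\Omega}\|_{2,1} = \sum_{k\in J}\sqrt{n_k} \le \sqrt{|J|}\,\sqrt{\sum_{k\in J} n_k} \le \sqrt{s\,|\tilde\Omega|},
\end{equation*}
which is exactly the bound used above the corollary.

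Finally I would substitute $\mathcal{R}^*(\Sigma_R) = \|\Sigma_R\|_{2,\infty}$, $\mathcal{R}(\mathbf{Id}_{\tilde\Omega}) \le \sqrt{s|\tilde\Omega|}$, and $|\tilde{\mathcal{I}}|/(m_1m_2) = s/m_2$ into the expressions \eqref{psi} for $\Psi_2, \Psi_3, \Psi_4$; this yields termwise upper bounds by $\Psi_2', \Psi_3', \Psi_4'$ respectively, while $\Psi_1$ is unchanged. Since the substitutions only enlarge the right-hand sides of \eqref{upper_bound} and \eqref{upper_bound1}, both conclusions of Theorem~\ref{thm2} continue to hold on the same high-probability event of measure at least $1 - 4.5 d^{-1}$. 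The main step that requires care is the decomposability check and the Cauchy--Schwarz estimate on $\|\mathbf{Id}_{\tilde\Omega}\|_{2,1}$; everything else is a mechanical substitution into a theorem already proved in full generality.
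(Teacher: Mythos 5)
Your proposal is correct and follows exactly the route the paper takes: the paper's (implicit) proof is precisely the specialization of Theorem~\ref{thm2} to $\mathcal{R}=\|\cdot\|_{2,1}$ via the identities $\mathcal{R}^*=\|\cdot\|_{2,\infty}$, $|\tilde{\mathcal{I}}|=m_1 s$, and the Cauchy--Schwarz bound $\|\mathbf{Id}_{\tilde\Omega}\|_{2,1}\leq\sqrt{s|\tilde\Omega|}$, all of which you verify. Your explicit check of absoluteness and decomposability and your derivation of the Cauchy--Schwarz step are slightly more detailed than the paper's one-line remarks, but the argument is the same.
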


In order to get a bound in a closed form, we need to obtain suitable upper bounds on the stochastic terms $\Sigma$, $\Sigma_{R}$ and $W$. We derive such bounds under an additional assumption on the column marginal sampling distribution. 
 Set $\pi_{\cdot,k}^{(2)} =\sum_{j=1}^{m_1} \pi_{jk}^2$.   
\begin{Assumption}\label{marginal_columns}
 There exists a positive constant $\gamma\geq 1$ such that 
$$
\underset{k}{\max}\;\pi_{\cdot,k}^{(2) } \leq \frac{\gamma^{2}}{\vert \mI\vert\,m_2}.
$$
\end{Assumption}
This condition prevents the columns from being sampled with too high probability and guarantees that the non-corrupted observations are well spread out among the columns.  Assumption \ref{marginal_columns} is clearly less restrictive than assuming that $\Pi$ is  uniform as it was done in the previous work on noiseless robust matrix completion. In particular, Assumption \ref{marginal_columns} is satisfied when the distribution $\Pi$ is approximately uniform, i.e., when $\pi_{jk} \asymp \frac{1}{m_1(m_2-s)}$. 
Note that Assumption \ref{marginal_columns} implies the following milder condition on the marginal sampling distribution: \begin{equation}\label{milder-marginal}
\underset{k}{\max}\;\pi_{\cdot k}\leq \frac{\sqrt{2}\,\gamma}{m_2}.
\end{equation}
 Condition \eqref{milder-marginal} is sufficient to control $\|\Sigma\|_{2,\infty}$ and $\|\Sigma_R\|_{2,\infty}$ while to we need a stronger Assumption \ref{marginal_columns} to control $\|W\|_{2,\infty}$.   
 
 The following lemma gives the order of magnitude of the stochastic terms driving the rates of convergence.
 
 
\begin{lemma}\label{lemma_stocastique} Let the distribution $\Pi$ on $\mathcal{X}'$ satisfy Assumptions  \ref{assPi}, \ref{L} and \ref{marginal_columns}. Let also Assumption \ref{noise} hold. Assume that $ N\leq m_1m_2$, $n\leq \vert \mI\vert$, and  $\log m_2 \geq 1$. Then, there exists an absolute constant $C>0$ such that, for any $t>0$, the following bounds on the norms of the stochastic terms hold with probability at least $1-e^{-t}$, as well as the associated bounds in expectation.
\begin{equation*}
\begin{split}
(i) \quad&\left\Vert \Sigma\right\Vert\leq C\sigma\max\left(\sqrt{\dfrac{L(t+\log d)}{\ae\,Nm}}, \frac{(\log m)(t+\log d)}{N}\right)\quad\text{and}\\
&\bE \left\Vert \Sigma_{R}\right\Vert\leq C\left (\sqrt{\dfrac{L\log(d)}{nm}}+\frac{\log^{2} d}{N}\right );\\
  (ii) \quad &\left\Vert \Sigma\right\Vert_{2,\infty}\leq C\sigma \left( \sqrt{\frac{\gamma(t+\log(d))}{\ae N m_2} }+ \frac{t+\log d}{N}\right)\quad\text{and}\\
    &\bE\left\Vert \Sigma\right\Vert_{2,\infty}\leq C\sigma\,\left( \sqrt{\frac{\gamma\log(d)}{\ae N m_2} }+ \frac{\log d}{N}\right);\\
    (iii) \quad &\left\Vert \Sigma_R\right\Vert_{2,\infty}\leq C\left( \sqrt{\frac{\gamma(t+\log(d))}{n m_2} }+ \frac{t+\log d}{n}\right)\quad\text{and}\\
         &\bE\left\Vert \Sigma_R\right\Vert_{2,\infty}\leq C \left( \sqrt{\frac{\gamma\log(d)}{n m_2} }+ \frac{\log d}{n}\right);\\
    \end{split}
    \end{equation*}
    \begin{equation*}
     \begin{split}
        (iv) \quad &\Vert W\Vert_{2,\infty}\leq C  \left(\frac{\gamma (t+\log m_2)^{1/4}}{\sqrt{\ae N m_2}}\left (1+\sqrt{\frac{m_2(t+\log m_2)}{n}}\right )^{1/2} + \frac{t+\log m_2}{N}   \right)\\
          & \bE \Vert W\Vert_{2,\infty}\leq C \left(\frac{\gamma \log^{1/4}( d)}{\sqrt{\ae N m_2}}\left(1+ \sqrt{\frac{m_2\log d}{n}}\right )^{1/2} + \frac{\log d}{N}   \right).
        \end{split}
    \end{equation*}
\end{lemma}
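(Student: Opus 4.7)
The plan is to establish each of the four items of the lemma using a concentration tool adapted to the structure of the random object in question: the matrix Bernstein inequality for the operator norms in (i), a vector Bernstein inequality together with a union bound over columns for the $\|\cdot\|_{2,\infty}$ norms of the centered objects $\Sigma$ and $\Sigma_R$ in (ii)--(iii), and a more delicate two-scale argument for the non-centered matrix $W$ in (iv).

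For (i), I would apply the noncommutative Bernstein inequality to the Rademacher average $\Sigma_R=\tfrac1n\sum_{i\in\Omega}\epsilon_iX_i$. Each summand has operator norm $1$ and matrix variance proxy $\max(\|\bE\epsilon_i^2 X_iX_i^\top\|,\|\bE\epsilon_i^2 X_i^\top X_i\|)\le L/m$ by Assumption \ref{L}. This yields $\|\Sigma_R\|\lesssim \sqrt{L\log(d)/(nm)}+\log(d)/n$ with high probability, from which the stated expectation bound follows by integrating the tail. For $\Sigma$ the scalar multiplier $\xi_i$ is only sub-Gaussian, not bounded; I would split it at threshold $\sigma\sqrt{\log m\,(t+\log d)}$, apply matrix Bernstein to the truncated part and absorb the residual using Assumption \ref{noise}. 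The truncation threshold accounts for the $\log m$ factor appearing in the bounded-range term of the $\|\Sigma\|$ bound.

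For (ii) and (iii), I would use the identity $\|\Sigma\|_{2,\infty}=\max_k\|\Sigma e_k\|_2$, bound each column separately, and take a union bound over $k\in\{1,\dots,m_2\}$, inflating the confidence parameter by $\log m_2\le\log d$. For fixed $k$, $\Sigma e_k=\tfrac1N\sum_{i\in\Omega}\xi_iX_ie_k$ is a sum of independent zero-mean vectors in $\bR^{m_1}$ with total second moment $\sum_{i\in\Omega}\bE\|\xi_i X_ie_k\|^2 = n\sigma^2\pi_{\cdot k}\lesssim n\sigma^2\gamma/m_2$ thanks to the marginal bound \eqref{milder-marginal} derived from Assumption \ref{marginal_columns}, each summand being sub-Gaussian with proxy $\sigma$. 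A vector Bernstein inequality (after truncating $\xi_i$) then yields the variance contribution $\sigma\sqrt{\gamma(t+\log d)/(\ae N m_2)}$ and the range contribution $\sigma(t+\log d)/N$, matching the stated bound. The argument for $\Sigma_R$ is identical with $\xi_i$ replaced by the bounded Rademacher $\epsilon_i$, avoiding truncation.

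Part (iv) is the most delicate because $W$ is not centered and the $\|\cdot\|_{2,\infty}$ norm involves a product of two distinct concentration phenomena. Writing $We_k=\tfrac1N n^{(k)}$ with $n^{(k)}=(n_{jk})_{j=1}^{m_1}$ the vector of column-$k$ counts, I would use the elementary bound $\|n^{(k)}\|_2\le (\|n^{(k)}\|_1\,\|n^{(k)}\|_\infty)^{1/2} = (N_k\,\max_j n_{jk})^{1/2}$. The total count $N_k$ is Binomial$(n,\pi_{\cdot k})$ and is controlled by a scalar Bernstein inequality, while $\max_j n_{jk}$ is controlled by a Bernstein bound combined with a union bound over $j\in\{1,\dots,m_1\}$, using $\pi_{jk}\le\pi_{\cdot k}\lesssim\gamma/m_2$ from Assumption \ref{marginal_columns} together with Assumption \ref{L}. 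Multiplying the two high-probability bounds and taking square roots produces the fourth-root factor $(t+\log m_2)^{1/4}$ and the compound factor $(1+\sqrt{m_2(t+\log m_2)/n})^{1/2}$ in the statement, while the Bernstein residuals contribute the $(t+\log m_2)/N$ term. The main obstacle is ensuring that the two-scale bookkeeping, after multiplication and simplification, reproduces exactly the compact closed form stated in the lemma, which requires tracking the Bernstein variance and range parameters carefully through the product and then inverting to obtain the expectation bound via integration of the tail.
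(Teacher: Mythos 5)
Your treatment of parts (i)--(iii) is sound. For (i) the paper simply invokes Lemmas 5 and 6 of \cite{klopp_general}, and your matrix-Bernstein-with-truncation sketch is the standard argument behind those results. For (ii)--(iii) you take a genuinely different route from the paper: the paper conditions on the design, writes $\sum_j(\frac1N\sum_{i\in\Omega}\xi_iX_i(j,k))^2$ as a quadratic form $\Xi^\top A_k\Xi$, applies Hanson--Wright, and then controls $\|A_k\|_2$ through Bernstein concentration of the Binomial count $Z(k)$; you instead apply a Hilbert-space Bernstein inequality directly to the vector sum $\Sigma e_k$. Both give the stated rate, and your route is arguably more direct. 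One small caveat: if you literally truncate $\xi_i$ at level $\sigma\sqrt{t+\log d}$, the range term becomes $\sigma(t+\log d)^{3/2}/N$ rather than the stated $\sigma(t+\log d)/N$; to recover the exact statement you should use the moment form of Bernstein's inequality with $\sum_i\bE\|Y_i\|^{\ell}\leq\frac{\ell!}{2}v\,b^{\ell-2}$, $b\asymp\sigma/N$, exactly as the paper does for the entrywise bounds in Lemma \ref{lemma_stochastique-sparse}.

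Part (iv), however, has a genuine gap. The inequality $\|n^{(k)}\|_2^2\leq\|n^{(k)}\|_1\,\|n^{(k)}\|_{\infty}$ is too lossy to yield the stated bound. The quantity you actually need to control is $\sum_j n_{jk}^2$, whose mean is $n\pi_{\cdot k}+n(n-1)\pi_{\cdot k}^{(2)}$; replacing $\sum_j n_{jk}^2$ by $N_k\max_j n_{jk}$ replaces $\pi_{\cdot k}^{(2)}=\sum_j\pi_{jk}^2$ by (essentially) $\pi_{\cdot k}\max_j\pi_{jk}$, and under the stated hypotheses the only available control on $\max_j\pi_{jk}$ is $\max_j\pi_{jk}\leq(\pi_{\cdot k}^{(2)})^{1/2}\leq\gamma/\sqrt{|\mI|\,m_2}$. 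In the dense regime $n\asymp N\asymp m_1m_2$ this gives $N_k\max_j n_{jk}\gtrsim\frac{n\gamma}{m_2}\cdot n\max_j\pi_{jk}\asymp\gamma^2 m_1\sqrt{m_1}$ for a distribution saturating Assumption \ref{marginal_columns}, whereas the truth and the stated bound are of order $\gamma^2 m_1\sqrt{t+\log m_2}$; the loss is polynomial (a factor $m^{1/4}$ in $\|W\|_{2,\infty}$), not logarithmic, so your argument cannot reproduce the lemma as stated. The point your decomposition misses is the averaging over $j$ in $\sum_j n_{jk}^2$: the off-diagonal part $\frac{1}{N^2}\sum_{i_1\neq i_2}\sum_j X_{i_1}(j,k)X_{i_2}(j,k)$ concentrates around $\frac{n(n-1)}{N^2}\pi_{\cdot k}^{(2)}$ much more tightly than a worst-case product bound suggests. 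The paper exploits this by splitting off the diagonal term $Z(k)/N^2$ (Binomial--Bernstein) and treating the off-diagonal term as an order-two U-statistic, which it controls via a decoupling argument (de la Pe\~na--Gin\'e) followed by the Gin\'e--Lata{\l}a--Zinn exponential inequality, with a careful evaluation of the four parameters $\mathbf A,\mathbf B,\mathbf C,\mathbf D$; it is precisely this step that produces the $(t+\log m_2)^{1/4}$ and the factor $\bigl(1+\sqrt{m_2(t+\log m_2)/n}\bigr)^{1/2}$ in the statement. You would need to replace your two-scale product bound by some such second-order concentration argument.
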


Let
\begin{equation}\label{def_n}
n^{*}=2\,\log(d) \left (\dfrac{m_2}{\gamma}\vee\frac{m\,\log^{2}m}{L}\right ).
\end{equation}
Recall that $\ae=\frac{N}{n}\geq 1$. If $n\geq n^{*}$, using the bounds given by Lemma \ref{lemma_stocastique}, we can chose the regularization parameters $\lambda_1$ and $\lambda_2$ in the following way:
\begin{equation}\label{reg_parameters_column}
\lambda_1=C\left (\sigma\vee\mathbf{a}\right )\sqrt{\dfrac{L\,\log(d)}{Nm}}\quad\text{and} \quad
\lambda_2=C\,\gamma\left (\sigma\vee\mathbf{a}\right )\sqrt{\dfrac{\log(d)}{Nm_2}},
\end{equation}
where $C>0$ is a large enough numerical constant.

With this choice of the regularization parameters, Corollary \ref{corollary_column_1} implies the following result.
\begin{Corollary}\label{upper_bound_column}
 Let the distribution $\Pi$ on $\mathcal{X}'$ satisfy Assumptions  \ref{assPi}, \ref{L} and \ref{marginal_columns}. Let  Assumption \ref{noise} hold and $\left\Vert L_{0}\right\Vert_{\infty}\leq \mathbf{a}$, $\left\Vert S_{0}\right\Vert_{\infty}\leq \mathbf{a}$. Assume that $ N\leq m_1m_2$ and $n^{*}\leq n$. Then, with probability at least $1-6/d$ for any solution $(\hat L,\hat S)$ of the convex program \eqref{estimator_column} with the regularization parameters $(\lambda_1,\lambda_2)$ given by \eqref{reg_parameters_column}, we have
 \begin{equation} \label{upper_bound_columns_1}
 \begin{split}
 \dfrac{\Vert L_0-\hat L\Vert_2^{2}}{m_1m_2}+\dfrac{\Vert  S_0-\hat S\Vert_2^{2}}{m_1m_2}&\leq C_{\mu,\gamma, L}(\sigma\vee\mathbf{a})^{2}\log (d)\,\ae \dfrac{r\,M+\vert\tilde \Omega\vert}{n}+\dfrac{\mathbf{a}^{2}s}{m_2}
 \end{split}
 \end{equation}
where $C_{\mu,\gamma,L}>0$ can depend only on $\mu,\gamma,L$. Moreover, with the same probability,
\begin{equation*}
\begin{split}
\dfrac{\Vert \hat S_{\mI}\Vert^{2}_2}{\vert\mI\vert}\leq C_{\mu,\gamma, L} \dfrac{\ae(\sigma\vee\mathbf{a})^{2}\,\vert\tilde \Omega\vert\,\log (d)}{n}+\dfrac{\mathbf{a}^{2}s}{m_2}.
\end{split}
\end{equation*}
\end{Corollary}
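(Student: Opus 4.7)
The plan is to apply Corollary~\ref{corollary_column_1} with the specific choice \eqref{reg_parameters_column} of $(\lambda_1,\lambda_2)$, and to use Lemma~\ref{lemma_stocastique} both for verifying the hypotheses on the regularization parameters and for turning the abstract quantities $\Psi_1,\Psi_2',\Psi_3',\Psi_4'$ into the closed-form right-hand side of \eqref{upper_bound_columns_1}. The role of the assumption $n\geq n^{*}$ in \eqref{def_n} is precisely to guarantee that the leading square-root terms in Lemma~\ref{lemma_stocastique} dominate the Bernstein-type $\log^{2} d/N$ and $(\log d)/n$ corrections, so that all stochastic terms collapse to their ``main'' order.

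First I would invoke Lemma~\ref{lemma_stocastique}(i), (ii), (iv) with $t=\log d$ to obtain, with probability at least $1-3/d$, $\|\Sigma\|\lesssim \sigma\sqrt{L\log d/(Nm)}$ and $\|\Sigma\|_{2,\infty}+2\mathbf{a}\|W\|_{2,\infty}\lesssim_{\gamma}(\sigma\vee\mathbf{a})\sqrt{\log d/(Nm_{2})}$; taking the numerical constant $C$ in \eqref{reg_parameters_column} large enough then forces $\lambda_{1}\geq 4\|\Sigma\|$ and $\lambda_{2}\geq 4(\|\Sigma\|_{2,\infty}+2\mathbf{a}\|W\|_{2,\infty})$. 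Combining this with the probability $1-4.5/d$ from Corollary~\ref{corollary_column_1} yields the claimed probability $1-6/d$. Next I would plug the chosen $\lambda_{1},\lambda_{2}$ together with the expectation bounds $\bE\|\Sigma_{R}\|\lesssim\sqrt{L\log d/(nm)}$ and $\bE\|\Sigma_{R}\|_{2,\infty}\lesssim\gamma\sqrt{\log d/(nm_{2})}$ into the definitions of $\Psi_{1},\Psi_{2}',\Psi_{3}'$. In $\Psi_{1}$, the leading summand $\mu^{2}m_{1}m_{2}r\,\ae^{2}\lambda_{1}^{2}$ evaluates to $(\sigma\vee\mathbf{a})^{2}L\,rM\,\ae\log d/n$, which absorbs both the $\mathbf{a}^{2}(\bE\|\Sigma_R\|)^{2}$ piece and the residual $\mathbf{a}^{2}\mu\sqrt{\log d/n}$ piece under $n\leq N\leq m_{1}m_{2}$. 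For $\Psi_{2}'$, the ratio $\lambda_{2}/\lambda_{1}=\gamma\sqrt{m/(Lm_{2})}$ makes each of the three summands inside the parenthesis of order $\gamma(\sigma\vee\mathbf{a})\sqrt{\ae\log d/(nm_{2})}$, so that $\Psi_{2}'\lesssim\mu\gamma\mathbf{a}(\sigma\vee\mathbf{a})\sqrt{(s/m_{2})\cdot(|\tilde\Omega|\ae\log d/n)}$, and a single application of AM-GM splits this into $\mathbf{a}^{2}s/m_{2}+(\sigma\vee\mathbf{a})^{2}\ae|\tilde\Omega|\log d/n$. For $\Psi_{3}'$, both ratios $\mathbf{a}\bE\|\Sigma_{R}\|/\lambda_{1}$ and $\mathbf{a}\bE\|\Sigma_{R}\|_{2,\infty}/\lambda_{2}$ are $O(\sqrt{\ae})=O(\ae)$, so the parenthesised factor is $O(\ae)$; then using $\mathbf{a}^{2}+\sigma^{2}\log d\leq 2(\sigma\vee\mathbf{a})^{2}\log d$ and $\ae|\tilde\Omega|/N=|\tilde\Omega|/n$ produces the same $(\sigma\vee\mathbf{a})^{2}\ae|\tilde\Omega|\log d/n+\mathbf{a}^{2}s/m_{2}$ shape. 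Summing the three contributions gives exactly \eqref{upper_bound_columns_1}, and an identical calculation applied to $\Psi_{4}'$ yields the second inequality.

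The main obstacle is purely bookkeeping: there are roughly ten sub-terms distributed across $\Psi_{1},\Psi_{2}',\Psi_{3}',\Psi_{4}'$, each arising from a different cross-product of $\lambda_{1},\lambda_{2}$ with $\bE\|\Sigma_{R}\|,\bE\|\Sigma_{R}\|_{2,\infty},\bE\|W\|_{2,\infty}$, and one must carefully verify that every one of them is absorbed by the target right-hand side under the standing assumptions $N\leq m_{1}m_{2}$ and $n\geq n^{*}$. The condition $n\geq n^{*}$ is tight here: it is exactly what is needed both to kill the Bernstein tails in Lemma~\ref{lemma_stocastique}(i)--(iv) and to make $(1+\sqrt{m_{2}\log d/n})^{1/2}$ in the bound on $\bE\|W\|_{2,\infty}$ collapse to a constant depending only on $\gamma$, so that $\mathbf{a}\|W\|_{2,\infty}$ is indeed dominated by $\lambda_{2}$.
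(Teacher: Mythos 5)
Your proposal is correct and follows essentially the same route as the paper: apply Corollary \ref{corollary_column_1} with the parameters \eqref{reg_parameters_column}, use Lemma \ref{lemma_stocastique} to verify the conditions on $(\lambda_1,\lambda_2)$ and to evaluate $\Psi_1,\Psi_2',\Psi_3',\Psi_4'$, with $n\geq n^*$ absorbing the Bernstein tails — the paper's own proof is just this bookkeeping, stated even more tersely. The only nit is the probability count: $3/d+4.5/d=7.5/d$, not $6/d$, so you should take $t=\log(2d)$ (or similar) in Lemma \ref{lemma_stocastique} so that the three stochastic events cost at most $1.5/d$ in total.
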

 \textbf{Remarks.}
  \textbf{1.} The upper bound \eqref{upper_bound_columns_1} can be decomposed into two terms. The first term is proportional to $rM/n$.
 It  is of the same order as in the case  of the usual matrix completion, see \cite{KLT11,klopp_general}. The second term accounts for the corruption. It is proportional to the number of corrupted columns $s$ and to the number of corrupted observations $\vert\tilde \Omega\vert$. This term vanishes if there is no corruption, i.e., when $S_0=\bf{0}$.

  \textbf{2.} If all entries of $A_0$ are observed, i.e., the matrix decomposition problem is considered, the bound \eqref{upper_bound_columns_1} is analogous to the corresponding bound in  \cite{AgarwalNegahbanWainwright}. Indeed, then $\vert \tilde \Omega\vert=sm_1, N=m_1m_2$, $\ae \leq 2$ 
  and we get
  \begin{equation*} 
   \begin{split}
   \dfrac{\Vert L_0-\hat L\Vert_2^{2}}{m_1m_2}+\dfrac{\Vert  S_0-\hat S\Vert_2^{2}}{m_1m_2}&\lesssim\ (\sigma\vee\mathbf{a})^{2}\left (\dfrac{r\,M}{m_1m_2}+\dfrac{s}{m_2}\right ).
   \end{split}
   \end{equation*}
  The estimator studied in  \cite{AgarwalNegahbanWainwright} for matrix decomposition problem is similar to our program  \eqref{estimator_column}. The difference between these estimators is that in \eqref{estimator_column} the minimization is over $\Vert \cdot\Vert_{\infty}$-balls  while the program of \cite{AgarwalNegahbanWainwright} uses the minimization over $\Vert\cdot \Vert_{2,\infty}$-balls and requires the knowledge of a bound on the norm $\Vert L_0 \Vert_{2,\infty}$ of the unknown matrix $L_0$.

 \textbf{3.}
Suppose that the number of corrupted columns is small ($s\ll m_2$). Then, Corollary \ref{upper_bound_column} guarantees, that the prediction error  of our estimator is small whenever the number of non-corrupted observations $n$ satisfies the following condition 
\begin{equation}\label{condition_small_error}
n\gtrsim(m_1\vee m_2)\rank(L_0)+\vert\tilde \Omega\vert
\end{equation}
where $\vert\tilde \Omega\vert$ is the number of corrupted observations. This quantifies the sample size  sufficient for successful (robust to corruptions) matrix completion. 
  When the rank $r$ of $L_0$ is small and $s\ll m_2$, the right hand side of \eqref{condition_small_error}  is considerably smaller than the total number of entries $m_1m_2$.

  \textbf{4.} By changing the numerical constants, 
one can obtain that the upper bound \eqref{upper_bound_columns_1} is valid with probability $1-6d^{-\alpha}$ for a given  $\alpha\geq 1$.
  
\section{Entrywise sparse corruptions}\label{element-wise_sparsity}
We assume now that $S_0$ has $s$ non-zero entries but they do not necessarily lay in a small subset of columns.  We will also assume that $s\leq \frac{m_1m_2}{2}$.
We use now the $l_1$-regularizer $\mR$. Then the convex program \eqref{estimator} takes the form
\begin{equation}\label{estimator_sparse}
(\hat{L},\hat S)\in\underset{^{\left\Vert L\right\Vert_{\infty}\leq \mathbf{a}}_{\left\Vert S\right\Vert_{\infty}\leq \mathbf{a}}
}{\argmin}\left \{\dfrac{1}{N}\sum^{N}_{i=1} \left (Y_i-\left\langle X_i,L+S\right\rangle\right )^{2}+\lambda_{1} \Vert L\Vert_{\ast} +\lambda_{2}\Vert S\Vert_{1}\right \}.
\end{equation}
The support $\tilde\mI = \{ (j,k)\,:\, (S_0)_{jk} \neq 0\}$ of the non-zero entries of $S_0$ satisfies 
 $\vert \tilde{\mI}\vert=s$. Also, $\Vert\mathbf{Id}_{\tilde{\Omega}}\Vert_{1}=\vert \tilde \Omega\vert$ so that  $\Psi_2$, $\Psi_3$, and $\Psi_4$  take form
\begin{equation*}
\begin{split}
\Psi''_2&= \mu\,\mathbf{a}\,\vert \tilde \Omega\vert\left (\dfrac{\mathbf a\,\lambda_2}{\lambda_1}\bE\left ( \Vert\Sigma_R\Vert\right )+\ae\lambda_2+\mathbf a\,\bE \Vert\Sigma_R\Vert_{2,\infty}\right ),\\
\Psi''_3&=\dfrac{\mu\,\vert \tilde\Omega\vert
\left (\mathbf a^{2}+\sigma^{2}\log(d)\right  )}{N}\left (\dfrac{\mathbf a\bE\left ( \Vert\Sigma_R\Vert\right )}{\lambda_1}+\dfrac{\mathbf a\bE \Vert\Sigma_R\Vert_{2,\infty}}{\lambda_2}+\ae\right )+\dfrac{\mathbf{a}^{2}s}{m_1m_2},
\\
\Psi''_4&=\mu\,\mathbf{a}^{2}\,\sqrt{\dfrac{\log(d)}{n}}+\mu\,\mathbf{a}\,\vert \tilde \Omega\vert\left [\ae\,\lambda_{2}+ \mathbf a\,\bE\Vert\Sigma_R\Vert_{2,\infty}\right ]\\&\hskip 5 cm+\left [ \dfrac{\mathbf a\,\bE\Vert\Sigma_R\Vert_{2,\infty}}{\lambda_2}+\ae\right ]\frac{\mu\,\vert \tilde\Omega\vert\left (\mathbf a^{2}+\sigma^{2}\log(d)\right  )}{N}.
\end{split}
\end{equation*}
 Specializing Theorem \ref{thm2} to this case yields the following corollary:
\begin{Corollary}\label{corollary_sparse_1}
Assume that $\left\Vert L_{0}\right\Vert_{\infty}\leq \mathbf{a}$ and $\left\Vert S_{0}\right\Vert_{\infty}\leq \mathbf{a}$. 
Let the regularization parameters $(\lambda_1,\lambda_2)$ satisfy 
$$\lambda_1>4\left\Vert \Sigma\right\Vert\;\text{and} \;\lambda_2\geq 4\left (\Vert\Sigma\Vert_{\infty}+2\mathbf{a}\Vert W\Vert_{\infty}\right ).
$$ 
Then, with probability at least $1-4.5\,d^{-1}$, for any solution $(\hat L,\hat S)$ of the convex program \eqref{estimator_sparse} with such regularization parameters $(\lambda_1,\lambda_2)$ we have
%
%
\begin{equation*}
 \begin{split}
 \dfrac{\Vert L_0-\hat L\Vert_2^{2}}{m_1m_2}+\dfrac{\Vert S_0-\hat S\Vert_2^{2}}{m_1m_2}&\leq C\,\left \{ \Psi_1+ \Psi''_2+\Psi''_3\right \}
 \end{split}
 \end{equation*}
 where $C$ is an absolute constant. Moreover, with the same probability,
\begin{equation*}
\begin{split}
\dfrac{\Vert \hat S_{\mI}\Vert^{2}_2}{\vert\mI\vert}\leq C\Psi''_4.
\end{split}
\end{equation*}
\end{Corollary}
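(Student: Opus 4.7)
The plan is to obtain Corollary~\ref{corollary_sparse_1} as a direct specialization of Theorem~\ref{thm2} to the choice $\mR(\cdot) = \|\cdot\|_1$. First I would check that the hypotheses of Theorem~\ref{thm2} are met by this regularizer: the $\ell_1$ norm is clearly absolute, since it depends on an entry only through its absolute value, and it is decomposable with respect to any subset of indices $I \subset \{1,\dots,m_1\}\times\{1,\dots,m_2\}$, because the entries of $A_I$ and $A_{\bar I}$ have disjoint supports, so $\|A\|_1 = \sum_{(j,k)\in I}|A_{jk}| + \sum_{(j,k)\in\bar I}|A_{jk}| = \|A_I\|_1 + \|A_{\bar I}\|_1$. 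The dual norm is the elementwise maximum: $\mR^{*}(A) = \|A\|_\infty$, so the prescribed lower bound $\lambda_2 \geq 4(\mR^{*}(\Sigma) + 2\mathbf{a}\mR^{*}(W))$ is exactly the hypothesis of the corollary.

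Next I would identify the two quantities in the generic bounds \eqref{psi} that depend on the specific structure of $S_0$ and on $\mR$. By assumption, $S_0$ has exactly $s$ non-zero entries, so the index set $\tilde{\mI}$ (defined in the entrywise case as the support of $S_0$) satisfies $|\tilde{\mI}| = s$, which transforms the term $\mathbf{a}^2 |\tilde{\mI}|/(m_1m_2)$ appearing in $\Psi_3$ into $\mathbf{a}^2 s/(m_1m_2)$, the value shown in $\Psi''_3$. Likewise, since every entry of $\mathbf{Id}$ equals $1$, the matrix $\mathbf{Id}_{\tilde\Omega}$ has $|\tilde\Omega|$ entries equal to $1$ and the rest equal to $0$, so $\mR(\mathbf{Id}_{\tilde\Omega}) = \|\mathbf{Id}_{\tilde\Omega}\|_1 = |\tilde\Omega|$; substituting this in place of $\mR(\mathbf{Id}_{\tilde\Omega})$ in $\Psi_2$ and $\Psi_4$ yields the expressions $\Psi''_2$ and $\Psi''_4$.

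Finally, one just plugs the resulting expressions into the bounds \eqref{upper_bound} and \eqref{upper_bound1} of Theorem~\ref{thm2}, which directly gives both inequalities of Corollary~\ref{corollary_sparse_1} with the same probability $1 - 4.5\,d^{-1}$ and the same absolute constant $C$. There is no real obstacle: the entire argument is a bookkeeping exercise identifying how the abstract quantities $\mR$, $\mR^{*}$, $|\tilde{\mI}|$ and $\mR(\mathbf{Id}_{\tilde\Omega})$ in Theorem~\ref{thm2} instantiate under $\ell_1$ regularization with an entrywise-$s$-sparse corruption matrix. The only small point worth stating explicitly in the proof is the decomposability of $\|\cdot\|_1$, since this is a structural hypothesis of Theorem~\ref{thm2} that must be verified before the theorem can be invoked.
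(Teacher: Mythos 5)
Your proposal is correct and coincides with the paper's own (implicit) argument: the corollary is obtained exactly by verifying that $\|\cdot\|_1$ is absolute and decomposable with dual norm $\|\cdot\|_{\infty}$, noting $|\tilde{\mI}|=s$ and $\|\mathbf{Id}_{\tilde\Omega}\|_1=|\tilde\Omega|$, and substituting into Theorem~\ref{thm2}. The only cosmetic mismatch is that the paper's displayed $\Psi''_2$, $\Psi''_3$, $\Psi''_4$ carry $\bE\Vert\Sigma_R\Vert_{2,\infty}$ where the literal substitution $\mR^{*}=\Vert\cdot\Vert_{\infty}$ gives $\bE\Vert\Sigma_R\Vert_{\infty}\leq\bE\Vert\Sigma_R\Vert_{2,\infty}$, so the stated (larger) quantities remain valid upper bounds.
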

In order to get a bound in a closed form we need to obtain suitable upper bounds on the stochastic terms $\Sigma,\Sigma_{R}$ and $W$. We provide such bounds under the following additional assumption on the sampling distribution. 

\begin{Assumption}\label{marginal_sparse}
There exists a positive constant $\gamma\geq 1$ such that 
$$\underset{i,j}{\max}\;\pi_{ij}\leq \frac{\mu_1}{\vert \mI\vert}.$$  
\end{Assumption}

This assumption prevents any entry from being sampled too often and  guarantees that the observations are well spread out over the non-corrupted entries.
Assumptions \ref{assPi} and  \ref{marginal_sparse} imply that the sampling distribution  $\Pi$ is approximately uniform in the sense that $\pi_{jk} \asymp \frac{1}{\vert \mI\vert}$. In particular, since $\vert \mI\vert\leq \frac{m_1m_2}{2}$, Assumption \ref{marginal_sparse} implies Assumption \ref{L} for $L=2\mu_1$.

\begin{lemma}\label{lemma_stochastique-sparse} 
Let the distribution $\Pi$ on $\mathcal{X}'$ satisfy Assumptions  \ref{assPi}, and \ref{marginal_sparse}. Let also Assumption \ref{noise} hold. Then, there exists an absolute constant $C>0$ such that, for any $t>0$, the following bounds on the norms of the stochastic terms hold with probability at least $1-e^{-t}$, as well as the associated bounds in expectation.
\begin{equation*}\begin{split}
(i) \quad&\Vert W\Vert_{\infty}\leq  C \left( \frac{\mu_1}{\ae m_1m_2} + \sqrt{ \frac{\mu_1 (t+\log d))}{ \ae N m_1m_2}} + \frac{ t+\log d}{ N}\right)\quad\text{and}\\
 &\bE \Vert W\Vert_{\infty}\leq  C\left( \frac{\mu_1}{\ae m_1m_2} + \sqrt{ \frac{\mu_1 \log d}{ \ae N m_1m_2}} + \frac{\log d}{ N}\right);
\end{split}
  \end{equation*}
\begin{equation*}
 \begin{split}
 (ii) \quad&\left\Vert \Sigma\right\Vert_{\infty}\leq C  \sigma\left(\sqrt{\frac{\mu_1(t+\log d)}{\ae Nm_1m_2}} + \frac{t+\log d}{N} \right)\quad\text{and}\\
  &\bE\left\Vert \Sigma\right\Vert_{\infty}\leq C \sigma\left(  \sqrt{\frac{\mu_1 \log d}{\ae N m_1 m_2}} + \frac{ \log d}{N} \right);
 \end{split}
\end{equation*}
 \begin{equation*}
 \begin{split}
 (iii) \quad&\left\Vert \Sigma_R\right\Vert_{\infty}\leq C \left(\sqrt{\frac{\mu_1(t+\log d)}{n\,m_1m_2}} + \frac{t+\log d}{n} \right)\quad\text{and}\\
 &\bE\left\Vert \Sigma_R\right\Vert_{\infty}\leq C \left(\sqrt{\frac{\mu_1 \log d}{n\,m_1m_2}} + \frac{\log d}{n} \right).
 \end{split}
 \end{equation*}     
\end{lemma}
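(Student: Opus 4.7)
\textbf{Proof plan for Lemma \ref{lemma_stochastique-sparse}.} The three quantities to bound are $\|\cdot\|_\infty$ norms of $m_1\times m_2$ random matrices whose $(j,k)$-th entry is a sum of i.i.d.\ scalar random variables. Since $\|A\|_\infty = \max_{j,k}|A_{jk}|$, the plan is: fix $(j,k)$, control the entry by a scalar Bernstein-type inequality, then union-bound over $m_1m_2 \le d^2$ entries, so the union-bound cost is only $2\log d$.

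For part (i), fix $(j,k)$ and write $W_{jk} = \frac{1}{N}\sum_{i\in\Omega} Z_i$ with $Z_i=\mathbf{1}\{X_i=e_j(m_1)e_k^\top(m_2)\}$ i.i.d.\ Bernoulli$(\pi_{jk})$. The centered sum $\sum_{i\in\Omega}(Z_i-\pi_{jk})$ involves $n$ bounded terms with variance $\le \pi_{jk} \le \mu_1/|\mathcal{I}| \le 2\mu_1/(m_1m_2)$. Bernstein's inequality then gives, for each $(j,k)$,
\[
\mathbb{P}\!\left(\bigl|W_{jk}-\tfrac{n\pi_{jk}}{N}\bigr| > x\right) \le 2\exp\!\left(-\frac{N^2 x^2/2}{n\pi_{jk} + Nx/3}\right).
\]
A union bound over the $m_1m_2$ entries, together with the deterministic bias bound $\|\mathbb{E} W\|_\infty \le \max_{j,k}\pi_{jk}/\ae \le \mu_1/(\ae\,m_1m_2/2) \lesssim \mu_1/(\ae m_1m_2)$, produces the claimed tail inequality. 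The expectation bound follows by integrating the tail, or equivalently taking $t=0$ in the high-probability bound and adding the bias term.

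For parts (ii) and (iii), the same strategy applies but with sub-exponential summands. For $\Sigma$, the entry $\Sigma_{jk} = \frac{1}{N}\sum_{i\in\Omega}\xi_i Z_i$ is a sum of centered, independent random variables (independence of $\xi_i$ and $X_i$ gives the centering). Under Assumption~\ref{noise} each $\xi_i Z_i$ is sub-exponential with $\|\xi_i Z_i\|_{\psi_1}\lesssim \sigma$ and variance $\le \sigma^2\pi_{jk}$, so the sub-exponential Bernstein inequality yields
\[
\mathbb{P}\!\left(|\Sigma_{jk}|>x\right)\le 2\exp\!\left(-c\min\!\left(\tfrac{N^2 x^2}{n\sigma^2\pi_{jk}},\,\tfrac{Nx}{\sigma}\right)\right),
\]
and a union bound plus the bound $\pi_{jk}\lesssim \mu_1/(m_1m_2)$ gives (ii). Part (iii) is identical with $\xi_i$ replaced by the Rademacher $\epsilon_i$ (which are sub-Gaussian, hence sub-exponential, with constant $\psi_1$-norm), the normalization $1/n$ in place of $1/N$, and $\sigma$ replaced by an absolute constant.

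The only mild technical point is handling the two regimes of sub-exponential Bernstein (a square-root ``Gaussian'' regime at small $x$ and a linear ``Poissonian'' regime at large $x$), which produces the two additive terms in the stated bounds. The expectation bounds then follow by the standard trick $\mathbb{E}\|\cdot\|_\infty \le \int_0^\infty \mathbb{P}(\|\cdot\|_\infty>x)\,dx$, setting $t=\log d$ as the threshold so that the integrated tail contributes only lower-order terms beyond the bound obtained at $t=\log d$. There is no real obstacle here: all three bounds are textbook Bernstein plus union bound over $O(d^2)$ entries, which is why Assumption~\ref{L} is not required (it is implied by Assumption~\ref{marginal_sparse} with $L=2\mu_1$) and why no matrix Bernstein/non-commutative machinery is needed, unlike for the operator-norm bounds of Lemma~\ref{lemma_stocastique}.
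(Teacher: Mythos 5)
Your proposal is correct and follows essentially the same route as the paper: fix an entry, apply a scalar Bernstein inequality (the paper uses the moment-condition form via the $\psi_2$-norm of $\xi_i$; your sub-exponential $\psi_1$ phrasing is equivalent), union-bound over the $m_1m_2$ entries by shifting $t\mapsto t+\log(m_1m_2)$, add the deterministic bias $\|\bE W\|_\infty\leq \mu_1/(\ae m_1m_2)$ for part (i), and obtain the expectation bounds by integrating the tail (the paper's Lemma~\ref{lem-tech}). No gaps.
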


Using Lemma \ref{lemma_stocastique}(i), and Lemma \ref{lemma_stochastique-sparse}, under the conditions 
\begin{equation}\label{cond_n_spars}
 \begin{split}
\frac{m_1m_2\log d}{\mu_1}\geq n\geq  \frac{2\,m\log(d)\log^{2}(m)}{L} 
 \end{split}
 \end{equation}  
 we can choose the regularization parameters $\lambda_1$ and $\lambda_2$ in the following way:
\begin{equation}\label{reg_parameters_sparse}
\begin{split}
\lambda_1=C(\sigma\vee\mathbf{a})\sqrt{\dfrac{\mu_1\,\log(d)}{Nm}}\qquad\text{and} \qquad
\lambda_2=C(\sigma\vee\mathbf{a})\dfrac{\log(d)}{N}.
\end{split}
\end{equation}
With this choice of the regularization parameters, Corollary \ref{corollary_sparse_1} and Lemma \ref{lemma_stochastique-sparse} imply the following result.

\begin{Corollary}\label{upper_bound_sparse}
Let the distribution $\Pi$ on $\mathcal{X}'$ satisfy Assumptions  \ref{assPi}, and \ref{marginal_sparse}. Let  Assumption \ref{noise} hold and $\left\Vert L_{0}\right\Vert_{\infty}\leq \mathbf{a}$, $\left\Vert S_{0}\right\Vert_{\infty}\leq \mathbf{a}$. Assume that $N\leq m_1m_2$ and that condition \eqref{cond_n_spars} holds.
Then, with probability at least $1-6/d$ for any solution $(\hat L,\hat S)$ of the convex program \eqref{estimator_sparse} with the regularization parameters $(\lambda_1,\lambda_2)$ given by \eqref{reg_parameters_sparse}, we have
\begin{equation} \label{upper_bound_sparse_1}
\begin{split}
\dfrac{\Vert  L_0-\hat L\Vert^{2}_2}{ m_1m_2}+\dfrac{\Vert S_0-\hat S\Vert^{2}_2}{m_1m_2}&\leq C_{\mu,\mu_1}\ae(\sigma\vee\mathbf{a})^{2}\log (d)\dfrac{r\,M+\vert\tilde \Omega\vert}{n}+\dfrac{\mathbf{a}^{2}s}{m_1m_2}
\end{split}
\end{equation}
where $C_{\mu,\mu_1}>0$ can depend only on $\mu$ and $\mu_1$. Moreover, with the same probability
\begin{equation*}
\begin{split}
\dfrac{\Vert \hat S_{\mI}\Vert^{2}_2}{\vert\mI\vert}\leq C_{\mu,\mu_1}\dfrac{\ae(\sigma\vee\mathbf{a})^{2}\vert\tilde \Omega\vert\,\log (d)}{n}+\dfrac{\mathbf{a}^{2}s}{m_1m_2}.
\end{split}
\end{equation*}
\end{Corollary}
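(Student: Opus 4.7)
The plan is to combine Corollary~\ref{corollary_sparse_1} with the stochastic estimates of Lemma~\ref{lemma_stocastique}(i) and Lemma~\ref{lemma_stochastique-sparse} applied to the choice \eqref{reg_parameters_sparse} of $(\lambda_1,\lambda_2)$. The work splits into (a) checking that those regularization parameters meet the hypotheses $\lambda_1 > 4\|\Sigma\|$ and $\lambda_2 \geq 4(\|\Sigma\|_\infty + 2\mathbf{a}\|W\|_\infty)$ of Corollary~\ref{corollary_sparse_1}, and (b) substituting the expectation bounds into $\Psi_1, \Psi''_2, \Psi''_3, \Psi''_4$ and collapsing to the claimed closed form.

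For step (a), I would take $t = \log d$ in the concentration bounds of Lemma~\ref{lemma_stocastique}(i) and Lemma~\ref{lemma_stochastique-sparse}(i),(ii), so that $\|\Sigma\|$, $\|\Sigma\|_\infty$, $\|W\|_\infty$ are controlled simultaneously on an event of probability at least $1 - 3/d$. Using the remark after Assumption~\ref{marginal_sparse} that it implies Assumption~\ref{L} with $L = 2\mu_1$, the lower bound $n \geq 2m\log(d)\log^2(m)/L$ in \eqref{cond_n_spars} forces the sub-Gaussian piece $\sqrt{L\log d/(\ae Nm)}$ to dominate the Bernstein residual in the bound on $\|\Sigma\|$, so $\lambda_1 = C(\sigma\vee\mathbf{a})\sqrt{\mu_1\log d/(Nm)}$ exceeds $4\|\Sigma\|$ for $C$ large. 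Similarly, the assumption $N \leq m_1 m_2$ combined with Assumption~\ref{marginal_sparse} makes the $\log d/N$ term dominant in the bounds on $\|\Sigma\|_\infty$ and $\mathbf{a}\|W\|_\infty$, so $\lambda_2 = C(\sigma\vee\mathbf{a})\log d/N$ satisfies the required inequality. Intersecting this event with the one of probability $1 - 4.5/d$ from Corollary~\ref{corollary_sparse_1} yields a joint probability of at least $1 - 6/d$ after a small constant adjustment.

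For step (b), I would plug the expectation bound of Lemma~\ref{lemma_stocastique}(i) for $\bE\|\Sigma_R\|$ and that of Lemma~\ref{lemma_stochastique-sparse}(iii) for $\bE\|\Sigma_R\|_\infty$ into each $\Psi$-quantity and expand. Using $m_1 m_2/(Nm) = M/N$ and $\ae = N/n$, a short calculation yields $m_1 m_2\, r\, \ae^2\lambda_1^2 \lesssim_{\mu,\mu_1} \ae(\sigma\vee\mathbf{a})^2(\log d)\,rM/n$, and the remaining pieces of $\Psi_1$ are of smaller or equal order. In $\Psi''_2$ and $\Psi''_3$, every term multiplying $|\tilde\Omega|$ collapses to an expression of order $\ae(\sigma\vee\mathbf{a})^2(\log d)\,|\tilde\Omega|/n$: the $\ae\lambda_2$ piece produces this order directly, and the cross terms $(\mathbf{a}\lambda_2/\lambda_1)\bE\|\Sigma_R\|$ and $(\mathbf{a}/\lambda_2)\bE\|\Sigma_R\|_\infty$ reduce to the same order through the specific choice \eqref{reg_parameters_sparse}. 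The residual $\mathbf{a}^2|\tilde\mI|/(m_1m_2) = \mathbf{a}^2 s/(m_1 m_2)$ in $\Psi''_3$ supplies the last summand of \eqref{upper_bound_sparse_1}, and an identical substitution in $\Psi''_4$ delivers the bound on $\|\hat S_{\mI}\|_2^2/|\mI|$.

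The main obstacle is the tight balance of the cross-terms $(\lambda_2/\lambda_1)\bE\|\Sigma_R\|$ and $(1/\lambda_2)\bE\|\Sigma_R\|_\infty$, for which both sides of the two-sided condition \eqref{cond_n_spars} are needed: the lower bound on $n$ forces the sub-Gaussian tail of $\|\Sigma\|$ to be the right scale for $\lambda_1$, while the upper bound $n \leq m_1 m_2\log(d)/\mu_1$ is exactly what is required so that $\mathbf{a}\,\bE\|\Sigma_R\|_\infty/\lambda_2$ does not exceed $\ae$ in $\Psi''_3$. Beyond this bookkeeping, the remaining manipulations are routine algebra.
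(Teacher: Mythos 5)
Your proposal is correct and follows essentially the same route as the paper, which proves this corollary exactly by verifying the hypotheses of Corollary~\ref{corollary_sparse_1} via Lemma~\ref{lemma_stocastique}(i) and Lemma~\ref{lemma_stochastique-sparse} under condition \eqref{cond_n_spars}, and then substituting the expectation bounds into $\Psi_1,\Psi''_2,\Psi''_3,\Psi''_4$ (the paper displays this substitution explicitly only for the columnwise analogue). Your identification of where each side of \eqref{cond_n_spars} is used matches the intended argument.
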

 \textbf{Remarks.}
\textbf{1.} As in the columnwise sparsity case, we can recognize two terms in the upper bound \eqref{upper_bound_sparse_1}. The first term is proportional to 
  $rM/n$. It is
   of the same order as the rate of convergence for the usual matrix completion, see \cite{KLT11,klopp_general}. The second term accounts for the corruptions and is proportional to the number $s$ of nonzero entries in $S_0$ and to the number of corrupted observations $\vert\tilde \Omega\vert$. We will prove in Section~\ref{lower_bounds} below that these error terms are of the correct order up to a logarithmic factor.
   
 \textbf{2.}  If $s\ll n<m_1m_2$, the bound~\eqref{upper_bound_sparse_1} implies that one can estimate a low-rank matrix from a nearly minimal number of observations, even when a part of the observations has been corrupted.
  
   \textbf{3.} If all entries of $A_0$ are observed, i.e., the matrix decomposition problem is considered, the bound \eqref{upper_bound_sparse_1} is analogous to the corresponding bound in  \cite{AgarwalNegahbanWainwright}. Indeed, then $\vert \tilde \Omega\vert\leq s, N=m_1m_2$, $\ae\leq 2$ and we get
    \begin{equation*} 
     \begin{split}
     \dfrac{\Vert L_0-\hat L\Vert_2^{2}}{m_1m_2}+\dfrac{\Vert  S_0-\hat S\Vert_2^{2}}{m_1m_2}&\lesssim\ (\sigma\vee\mathbf{a})^{2}\left (\dfrac{r\,M}{m_1m_2}+\dfrac{s}{m_1m_2}\right ).
     \end{split}
     \end{equation*}


\section{Minimax lower bounds}\label{lower_bounds}

In this section, we prove the minimax lower bounds showing that the
rates attained by  our estimator are optimal up to a logarithmic
factor.
We will denote by $\inf_{(\hat{L},\hat{S})}$ the infimum over all pairs of estimators
$(\hat{L},\hat{S})$ for the components $L_0$ and $S_0$ in the decomposition $A_0 = L_0 + S_0$ where both $\hat{L}$ and $\hat{S}$ take values in $\mathbb R^{m_1\times m_2}$. For any $A_0\in\R^{m_1\times m_2}$, let $\mathbb P_{A_0}$ denote the probability
distribution of the observations $(X_1,Y_1,\dots,X_n,Y_n)$ satisfying \eqref{model}.
%

We begin with the case of columnwise sparsity.
For any matrix $S \in \mathbb R^{m_1\times m_2}$, we denote by $\|S\|_{2,0}$ the number of nonzero columns of $S$. 
For any integers $0\leq r\le
\min(m_1,m_2)$, $0\leq s\leq m_2$ and any ${\mathbf a}>0$, we consider the class of matrices
\begin{equation}\label{Alb_gs}
 \begin{split}
 {\cal A}_{GS}(r,s,{\mathbf a})
 &= \left \{ A_0 = L_0 + S_0\in\,\mathbb R^{m_1\times m_2}:\,
 \mathrm{rank}(L_0)\leq r,\,\Vert L_0\Vert_{\infty}\leq {\mathbf a}, \right.\\
 &\hskip 4 cm\left. \text{and}\;\Vert S_0\Vert_{2,0}\,\leq\, s\,,\Vert S_0\Vert_{\infty}\,\leq\, {\mathbf a}\,\, \right \}
 \end{split}
 \end{equation}
  and  
define
$$
\psi_{GS}(N,r,s) = (\sigma
\wedge {\mathbf a})^2 \left (\frac{Mr+\vert\tilde \Omega\vert}{n}+\frac{s}{m_2}\right ).
$$
The following theorem gives a lower bound on the estimation risk in the case of columnwise sparsity.

\begin{theorem}\label{th:lower_group_sparse} Suppose that $m_1,m_2\geq 2$.  Fix ${\mathbf a}>0$ and integers
$1\leq  r\leq \min(m_1,m_2)$ and $1\leq s\leq m_2/2$. Let Assumption \ref{marginal_sparse} be satisfied. Assume that $Mr\leq n$, $\vert \tilde \Omega\vert\leq sm_1$ and $\ae\leq 1+s/m_2$. Suppose that
the variables $\xi_i$ are i.i.d. Gaussian
${\cal N}(0,\sigma^2)$, $\sigma^2>0$, for $i=1,\dots,n$.  Then, there
exist absolute constants $\beta\in(0,1)$ and $c>0$, such that
\begin{equation*}
\inf_{(\hat{L},\hat S)}
\sup_{\substack{(L_0,S_0)\in\,{\cal A}_{GS}( r,s,{\mathbf a})
}}
\mathbb P_{A_0}\left (\dfrac{\Vert \hat L-L_0\Vert_2^{2}}{m_1m_2}+\dfrac{\Vert \hat S-S_0\Vert_2^{2}}{m_1m_2} > c\psi_{GS}(N,r,s) \right )\ \geq\ \beta.
\end{equation*}
\end{theorem}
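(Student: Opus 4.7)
My plan is to apply the standard reduction from estimation to multiple hypothesis testing, combining Varshamov--Gilbert packings with a Fano/Tsybakov lemma. I would build two separate hypothesis families inside $\mathcal A_{GS}(r,s,\mathbf a)$, one producing the rank-related rate $(\sigma\wedge\mathbf a)^2 Mr/n$ and one producing the sparsity-related rate $(\sigma\wedge\mathbf a)^2 s/m_2$; the overall lower bound is then the worse of the two. The third component of $\psi_{GS}$, namely $|\tilde\Omega|/n$, needs no separate treatment, because the hypothesis $\ae\leq 1+s/m_2$ translates exactly into $|\tilde\Omega|/n\leq s/m_2$, so that $\psi_{GS}$ is, up to a factor of two, equal to $(\sigma\wedge\mathbf a)^2(Mr/n+s/m_2)$.

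For the rank family I would take $S_0=\mathbf 0$ and construct a Varshamov--Gilbert packing $\{L^{(1)},\dots,L^{(J_1)}\}$ of rank-$r$ matrices living entirely inside the non-corrupted block $\{1,\dots,m_1\}\times\bar J$ (of size at least $m_1\times m_2/2$ since $s\leq m_2/2$), with $\|L^{(a)}\|_\infty\leq\delta$, $\log J_1\gtrsim Mr$, and pairwise Frobenius distance $\gtrsim\delta^2 m_1 m_2$ (this is the classical construction used in \cite{KLT11,rhode-tsybakov-estimation}). Since each $L^{(a)}-L^{(b)}$ is supported in $\mathcal I$, the corrupted observations $i\in\tilde\Omega$ contribute zero to the Kullback--Leibler divergence, while Assumption \ref{marginal_sparse} and the Gaussian noise give, for the non-corrupted part, $KL(\mathbb P_{L^{(a)}}\|\mathbb P_{L^{(b)}})\leq C n\mu_1\delta^2/\sigma^2$. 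Choosing $\delta^2\asymp \bigl(\sigma^2 Mr/(n\mu_1)\bigr)\wedge(\mathbf a^2/4)$ balances $KL$ against $\log J_1$; Tsybakov's lemma combined with $Mr\leq n$ then yields the part of the lower bound of order $(\sigma\wedge\mathbf a)^2 Mr/n$.

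For the sparsity family I would take $L_0=\mathbf 0$, fix any set $J\subset\{1,\dots,m_2\}$ of cardinality $s$, and use a Varshamov--Gilbert packing of $\pm\delta$ sign patterns on $\tilde{\mathcal I}=\{1,\dots,m_1\}\times J$, padded by zeros elsewhere. This produces $\log J_2\gtrsim sm_1$ hypotheses with pairwise Frobenius separation $\gtrsim\delta^2 sm_1$, all lying in $\mathcal A_{GS}(r,s,\mathbf a)$ as soon as $\delta\leq\mathbf a$. Because $S^{(a)}-S^{(b)}$ is supported in $\tilde{\mathcal I}$, the non-corrupted observations contribute nothing to the $KL$, and each corrupted observation contributes at most $4\delta^2/(2\sigma^2)$. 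Using $|\tilde\Omega|\leq sm_1$, the choice $\delta^2\asymp\mathbf a^2\wedge(\sigma^2 sm_1/|\tilde\Omega|)$ keeps $KL$ below a small multiple of $\log J_2$ while also ensuring $\delta^2\gtrsim(\sigma\wedge\mathbf a)^2$; Tsybakov's lemma then delivers the remaining part of the lower bound, of order $\delta^2 s/m_2\gtrsim(\sigma\wedge\mathbf a)^2 s/m_2$.

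\textbf{Main obstacle.} The subtle point is the interaction between the adversarially chosen $X_i$ for $i\in\tilde\Omega$ and the rank-$r$ construction: a naive rank-$r$ packing with nonzero entries on the corrupted columns could be attacked by the adversary to inflate the $KL$ divergence by as much as $|\tilde\Omega|\mathbf a^2/\sigma^2$, which would swamp $Mr$. Forcing the rank-$r$ hypotheses to vanish on $\tilde{\mathcal I}$ neutralizes this attack, and the assumption $s\leq m_2/2$ is precisely what keeps the surviving $m_1\times(m_2-s)$ block large enough to carry a packing of cardinality $\gtrsim 2^{cMr}$ with pairwise Frobenius separation $\gtrsim\delta^2 m_1 m_2$ without any loss of rate.
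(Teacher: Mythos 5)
Your proposal is correct and follows essentially the same route as the paper's proof: a two-family reduction (a rank-$r$ Varshamov--Gilbert packing supported on the non-corrupted columns with $S_0=\mathbf 0$, and a sign-pattern packing on the $s$ corrupted columns with $L_0=\mathbf 0$), with the corrupted and non-corrupted observations contributing zero Kullback--Leibler divergence to the respective families thanks to the disjoint supports, and the term $\vert\tilde\Omega\vert/n$ absorbed into $s/m_2$ via the assumption $\ae\leq 1+s/m_2$. The only differences are cosmetic (entries in $\{0,\delta\}$ versus $\pm\delta$, and the explicit minima in your choice of $\delta$ where the paper substitutes the balanced value directly).
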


We turn now to the case of entrywise sparsity. For any matrix $S \in \mathbb R^{m_1\times m_2}$, we denote by $\|S\|_{0}$ the number of nonzero entries of $S$. For any integers $0\leq r\le
\min(m_1,m_2)$, $0\leq s \leq m_1m_2/2$ and any ${\mathbf a}>0$, we consider the class of matrices
\begin{align*}\label{Alb_s}
{\cal A}_{S}(r,s,a)&= \left \{A_0 = L_0 + S_0\in\,\mathbb R^{m_1\times m_2}:\,
\mathrm{rank}(L_0)\leq r,\, \right.\\
&\hskip 2 cm\left. \|S_0\|_{0} \leq s  ,\, \Vert L_0\Vert_{\infty}\,\leq {\mathbf a}, \Vert S_0\Vert_{\infty}\,\leq\, {\mathbf a}\right \}
\end{align*}
and define
$$
\psi_{S}(N,r,s) = (\sigma
\wedge {\mathbf a})^2 \left \{\frac{Mr + \vert\tilde \Omega\vert}{n}+\dfrac{s}{m_1m_2}\right \}.
$$
We have the following theorem for the lower bound in the case of entrywise sparsity.

\begin{theorem}\label{th:lower_sparse} Assume that $m_1,m_2\geq 2$. Fix ${\mathbf a}>0$ and integers
$1\leq  r\leq \min(m_1,m_2)$ and $1\leq s \leq m_1 m_2/2$. Let Assumption \ref{marginal_sparse} be satisfied. Assume that $Mr\leq n$ and there exists a constant $\rho>0$ such that $\vert \tilde \Omega\vert\leq \rho \,r\,M$.
Suppose that the variables $\xi_i$ are i.i.d. Gaussian
${\cal N}(0,\sigma^2)$, $\sigma^2>0$, for $i=1,\dots,n$.  Then, there
exist absolute constants $\beta\in(0,1)$ and $c>0$, such that
\begin{equation}\label{eq:lower1_s}
\inf_{(\hat{L},\hat S)}
\sup_{\substack{(L_0,S_0)\in\,{\cal A}_{S}(r,s,{\mathbf a})
}}
\mathbb P_{A_0}\bigg(\frac{\|\hat{L}-L_0\|^2_{2}}{m_1m_2} + \frac{\|\hat{S}-S_0\|^2_{2}}{m_1m_2} > c\psi_{S}(N,r,s) \bigg)\ \geq\ \beta.
\end{equation}
\end{theorem}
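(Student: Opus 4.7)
The proof follows the standard information-theoretic route to minimax lower bounds: Tsybakov's method combined with Fano's lemma. Under the assumption $|\tilde\Omega|\leq \rho\,rM$, the corruption term $|\tilde\Omega|/n$ in $\psi_S(N,r,s)$ is absorbed into $rM/n$ up to a $\rho$-dependent constant. Moreover, whenever $s\leq rM$ one has $s/(m_1m_2)\leq rM/(m_1m_2)\leq rM/n$ (using $n\leq m_1m_2$), so the $s/(m_1m_2)$ term is then dominated by $rM/n$. It therefore suffices to establish a lower bound of order $(\sigma\wedge\mathbf{a})^2\,rM/n$ in all cases, and a lower bound of order $(\sigma\wedge\mathbf{a})^2\,s/(m_1m_2)$ in the regime $s>rM$. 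Each is obtained by Fano's inequality applied to a well-separated finite hypothesis family inside ${\cal A}_S(r,s,\mathbf{a})$ with controlled Kullback--Leibler divergences.

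For the low-rank term, I restrict to $\{(L_0,0)\}\subset{\cal A}_S(r,s,\mathbf{a})$, so that the corrupted observations carry no information about $L_0$ and the sampling law is common across hypotheses. A Varshamov--Gilbert construction (essentially the one used in \cite{KLT11}) yields a packing of rank-$r$ sign matrices $L^{(k)}=\gamma B^{(k)}$ with $\log K\gtrsim rM$ and pairwise squared Frobenius separation $\gtrsim \gamma^2 m_1m_2\,r$. Choosing $\gamma\asymp(\sigma\wedge\mathbf{a})\sqrt{M/n}$ ensures $\|L^{(k)}\|_\infty\leq\mathbf{a}$ and gives separation of order $(\sigma\wedge\mathbf{a})^2 m_1m_2\,rM/n$. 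Gaussianity and Assumption~\ref{marginal_sparse} yield
\[
\mathrm{KL}(\mathbb{P}_{L^{(k)}}\,\|\,\mathbb{P}_{L^{(\ell)}})\le\frac{N}{2\sigma^2}\|L^{(k)}-L^{(\ell)}\|_{L_2(\Pi)}^2\lesssim\frac{\mu_1 N}{\sigma^2 m_1m_2}\|L^{(k)}-L^{(\ell)}\|_2^2\lesssim \mu_1\,\ae\,rM,
\]
with $\ae=N/n\leq 1+\rho$ under the hypotheses, so $\mathrm{KL}\lesssim\log K$ and Fano's inequality delivers the $(\sigma\wedge\mathbf{a})^2 rM/n$ bound.

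For the sparse term in the regime $s>rM$, I restrict to $\{(0,S_0)\}$ with $\mathrm{supp}(S_0)$ equal to a \emph{fixed} set $T\subset\{1,\ldots,m_1\}\times\{1,\ldots,m_2\}$ of cardinality $s$ and entries of $S_0$ in $\{-\delta,+\delta\}$. This fixes $\tilde{\cal I}=T$ (and hence $\Omega,\tilde\Omega$) across the family, reducing the parametrisation to sign vectors in $\{\pm 1\}^s$. A Varshamov--Gilbert packing yields $\{S^{(k)}\}$ of size $\log K\gtrsim s$ with pairwise squared Frobenius separation $\gtrsim s\delta^2$. Since $S^{(k)}-S^{(\ell)}$ is supported on $T$ while the non-corrupted observations land in $T^c$, only the $|\tilde\Omega|$ corrupted observations contribute to the KL:
\[
\mathrm{KL}(\mathbb{P}_{S^{(k)}}\,\|\,\mathbb{P}_{S^{(\ell)}})\le\frac{1}{2\sigma^2}\sum_{i\in\tilde\Omega}\langle X_i,S^{(k)}-S^{(\ell)}\rangle^2\le\frac{2|\tilde\Omega|\delta^2}{\sigma^2}.
\]
In the regime $s>rM$, the bound $|\tilde\Omega|\leq\rho rM<\rho s$ lets me take $\delta\asymp(\sigma\wedge\mathbf{a})$ so that $\mathrm{KL}\lesssim\rho s\lesssim\log K$; the resulting normalized squared separation is $\gtrsim (\sigma\wedge\mathbf{a})^2 s/(m_1m_2)$ and Fano concludes.

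The main technical delicacy is that the partition $(\Omega,\tilde\Omega)$, and hence the effective sampling law, depends on $\mathrm{supp}(S_0)$: hypotheses with different supports generate observations from different distributions. I resolve this by keeping $\mathrm{supp}(S_0)$ constant within each sub-packing (trivially when $S_0=0$ in the low-rank reduction; explicitly by fixing $T$ in the sparse reduction), and, in the low-rank case, placing the $|\tilde\Omega|$ corrupted observations on positions disjoint from the supports of all $L^{(k)}$ so that they carry no information and remain consistent with $|\tilde\Omega|\leq\rho rM$. Combining the two lower bounds yields the claimed rate $c\,\psi_S(N,r,s)$.
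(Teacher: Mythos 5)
Your proof is correct and follows essentially the same route as the paper: two Varshamov--Gilbert packings (a rank-$r$ family with $S_0=\mathbf 0$ and an entrywise-sparse family with $L_0=\mathbf 0$, the latter supported on a fixed index set) combined with Fano/Tsybakov's Theorem 2.5, with the Kullback--Leibler divergences controlled via Assumption \ref{marginal_sparse} for the low-rank packing and via the bound $\vert\tilde\Omega\vert\leq\rho\,rM$ for the sparse one. Your explicit case split on $s\lessgtr rM$ (needed so that the KL budget $\vert\tilde\Omega\vert\delta^2\lesssim s$ holds with an absolute $\delta$) and your care in keeping the partition $(\Omega,\tilde\Omega)$ consistent across hypotheses are details the paper glosses over, but they are refinements of the same argument rather than a different approach.
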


\bigskip


\appendix
\noindent {\bf \Large Appendix}
\section{Proofs of Theorem \ref{thm2} and of Corollary \ref{upper_bound_column}}\label{proof-thm2}
\subsection{Proof of Theorem \ref{thm2}}
The proofs of the upper bounds have similarities with the methods developed in \cite{klopp_general} for noisy matrix completion but the presence of corruptions in our setting requires a new approach, in particular, for proving ''restricted strong convexity property'' (Lemma \ref{constraind_S}) which is the main difficulty in the proof.

Recall that our estimator is defined as
\begin{equation*}
(\hat{L},\hat S)\in\underset{^{\left\Vert L\right\Vert_{\infty}\leq \mathbf{a}}_{\left\Vert S\right\Vert_{\infty}\leq \mathbf{a}}
}{\argmin}\left \{\dfrac{1}{N}\sum^{N}_{i=1} \left (Y_i-\left\langle X_i,L+S\right\rangle\right )^{2}+\lambda_{1} \Vert L\Vert_* +\lambda_{2}\mathcal{R}(S)\right \}
\end{equation*}
and our goal is to bound from above the Frobenius norms $\Vert  L_0-\hat L\Vert_2^{2}$ and $\Vert S_0-\hat S\Vert_2^{2}$.

\textbf{1)} Set 
 $\mathcal F(L,S)=\frac{1}{N}\sum^{N}_{i=1} \left (Y_i-\left\langle X_i,L+S\right\rangle\right )^{2}+\lambda_1 \Vert L\Vert_*+\lambda_2 \mathcal R( S)$, 
$\Delta L=L_0-\hat L$ and $\Delta S=S_0-\hat S$. Using  the inequality $\mathcal F(\hat L,\hat S)\leq \mathcal F(L_0,S_0)$ and \eqref{model} we get
\begin{equation*}
\dfrac{1}{N}\sum^{N}_{i=1} \left (\left\langle X_i,\Delta L+\Delta S\right\rangle+\xi_{i}\right )^{2}+\lambda_{1} \Vert \hat L\Vert_*+\lambda_{2}\mathcal{R}(\hat S)\leq \dfrac{1}{N}\sum^{N}_{i=1} \xi_{i}^{2}+\lambda_{1} \Vert L_0\Vert_*+\lambda_{2}\mathcal{R}(S_0).
\end{equation*}
After 
some algebra this implies 
\begin{equation}\label{rob_1}
\begin{split}
\dfrac{1}{N}\sum_{i\in \Omega} \left\langle X_i,\Delta L+\Delta S\right\rangle^{2}&\leq \underset{\mathbf {I}}{\underbrace{\dfrac{2}{N}\sum_{i\in\tilde\Omega} \left \vert\left\langle \xi_iX_i,\Delta L+\Delta S\right\rangle\right \vert-\dfrac{1}{N}\sum_{i\in \tilde\Omega} \left\langle X_i,\Delta L+\Delta S\right\rangle^{2}}}\\&\hskip 0.5 cm+ \underset{\mathbf {II}}{\underbrace{2\left \vert\left\langle \Sigma,\Delta L\right\rangle\right \vert+\lambda_{1}\left ( \Vert L_0\Vert_* -\Vert \hat L\Vert_*\right )}}\\&\hskip 1cm+\underset{\mathbf {III}}{\underbrace{2\left \vert\left\langle \Sigma,\Delta S_{\mI}\right\rangle\right \vert+\lambda_{2}\left ( \mR(S_0)-\mR(\hat S)\right )}}
\end{split}
\end{equation}
where $\Sigma=\frac{1}{N}\sum_{i\in \Omega} \xi_iX_i$ and we have used the equality  $\left\langle \Sigma,\Delta S\right\rangle=\left\langle \Sigma,\Delta S_{\mI}\right\rangle$.  We now  estimate each of the three terms on the right hand side of \eqref{rob_1} separately. This will be done on the random event 
\begin{equation}\label{random_event}
\mathcal{U}=\left \{\underset{1\leq i\leq N}{\max}\vert\xi_{i}\vert
\leq C_*\sigma\sqrt{\log d}\right \}
\end{equation}
where $C_*>0$ is a suitably chosen constant.
Using a standard bound on the maximum of sub-gaussian variables and the constraint $N\leq m_1m_2$ we get that there exists an absolute constant $C_*>0$  such that $\mathbb P(\mathcal{U})\geq 1-\frac{1}{2d}$. In what follows, we take this constant $C_*$ in the definition of $\mathcal{U}$.

We start by estimating $\mathbf{I}$. On the event $\mathcal{U}$, we get 
\begin{equation}\label{rob_4}
\begin{split}
{\bf I}&\leq \frac{1}{N}\sum_{i\in\tilde\Omega}  \xi^{2}_i \leq \dfrac{C\,\sigma^{2}\vert \tilde\Omega\vert\log(d)}{N}.
\end{split}
\end{equation}

Now we estimate $\mathbf{II}$. For a linear vector subspace $S$ of a euclidean space, let $P_S$ denote the orthogonal projector on $S$ and
 let $S^\bot$ denote the orthogonal complement of $S$.
 For any $A\in \mathbb{R}^{m_1\times m_2}$, let $u_j(A)$ and $v_j(A)$ be the left and right orthonormal singular vectors of $A$, respectively . Denote by $S_1(A)$ the linear span of $\{u_j(A)\}$, and by $S_2(A)$ the linear span of $\{v_j(A)\}$.
 We set
  \begin{equation*} 
  \begin{split}
  \mathbf P_A^{\bot}(B)=P_{S_1^{\bot}(A)}BP_{S_2^{\bot}(A)}\quad\text{and}\quad\mathbf P_A(B)=B- \mathbf P_A^{\bot}(B).
  \end{split}
  \end{equation*}
By definition of $\mathbf P_{L_0}^{\bot}$, for any matrix $B$ the singular vectors of $\mathbf P_{L_0}^{\bot}(B)$  are orthogonal to the space spanned by the singular vectors of $L_0$. This implies that $\left\Vert L_0+\mathbf P_{L_0}^{\bot}(\Delta L) \right\Vert_*=\left\Vert L_0 \right\Vert_*+\left\Vert \mathbf P_{L_0}^{\bot}(\Delta L) \right\Vert_*$. 
Thus,
\begin{equation*} \label{ineq}
\begin{split}
\| \hat L\|_*&=\left\Vert L_0 +\Delta L\right\Vert_*\\
&=\left\Vert L_0 +\mathbf P_{L_0}^{\bot}(\Delta L)+\mathbf P_{L_0}(\Delta L)\right\Vert_*\\
&\geq \left\Vert L_0 +\mathbf P_{L_0}^{\bot}(\Delta L)\right\Vert_*-\left\Vert\mathbf P_{L_0}(\Delta L)\right\Vert_*\\
&=\left\Vert L_0 \right\Vert_*+\left\Vert \mathbf P_{L_0}^{\bot}(\Delta L)\right\Vert_*-\left\Vert\mathbf P_{L_0}(\Delta L)\right\Vert_*,
\end{split}
\end{equation*}
which yields 
\begin{equation}\label{un2_correction}
\| L_0 \|_*-\| \hat L\|_*\leq \left\Vert\mathbf P_{L_0}(\Delta L)\right\Vert_*-\left\Vert \mathbf P_{L_0}^{\bot}(\Delta L)\right\Vert_*.
\end{equation}
 Using \eqref{un2_correction} and the duality between the nuclear and the operator norms, we obtain
\begin{equation*}
\begin{split}
\mathbf {II}\leq 2 \Vert \Sigma\Vert\Vert\Delta L\Vert_{*}+\lambda_{1}\left (\left\Vert\mathbf P_{L_0}(\Delta L)\right\Vert_*-\left \Vert \mathbf P_{L_0}^{\bot}(\Delta L)\right\Vert_*\right ).
\end{split}
\end{equation*}
The assumption that $\lambda_1\geq 4\Vert\Sigma\Vert$ and the triangle inequality imply 
\begin{equation}\label{rob_2}
\mathbf {II} \leq \dfrac{3}{2}\lambda_1\left\Vert\mathbf P_{L_0}(\Delta L)\right\Vert_*\leq \dfrac{3}{2}\lambda_1\sqrt{2r}\left\Vert\Delta L\right\Vert_2
\end{equation}
where $r=\rank (L_0)$ and we have used that $\rank(\mathbf P_{L_0}(\Delta L))\leq 2\,\rank(L_0)$.

For the third term in \eqref{rob_1}, we use the duality between the $\mR$ and  $\mR^{*}$, and the identity $\Delta S_{\mI}=-\hat S_{\mI}$:
\begin{equation*}
\begin{split}
\mathbf {III}\leq 2\mR^{*}(\Sigma)\mR(\hat S_{\mI})+\lambda_{2}\left ( \mR(S_0)-\mR(\hat S)\right ).
\end{split}
\end{equation*}
This and the assumption that $\lambda_2\geq 4\mR^{*}(\Sigma)$ imply
\begin{equation}\label{rob_3}
\mathbf {III}\leq \lambda_2\mR( S_0).
\end{equation}
Plugging \eqref{rob_2}, \eqref{rob_3} and \eqref{rob_4} in \eqref{rob_1} we get that, on the event $\mathcal{U}$,
\begin{equation} \label{rob_12}
\begin{split}
\dfrac{1}{n}\sum_{i\in\Omega} \left\langle X_i,\Delta L+\Delta S\right\rangle^{2}&\leq \frac{3\,\ae\,\lambda_1}{\sqrt{2}}\sqrt{r}\left\Vert\Delta L\right\Vert_2+\ae\lambda_2\mR( S_0) +\dfrac{C\sigma^{2}\vert \tilde\Omega\vert\log(d)}{n}
\end{split}
\end{equation}
where $\ae=N/n$.
\vskip 0.5 cm

\textbf{2)}  Second, we will show that a kind of restricted strong convexity 
holds for the random sampling operator given by $(X_i)$ on a suitable subset of matrices. In words, we prove that the observation operator captures a substantial component of any pair of matrices $L,S$ belonging  to a properly chosen {\it constrained set} (cf. Lemma \ref{constraind_S}(ii) below for the exact statement).  This will imply that, with high probability,
\begin{equation}\label{rob_5}
\dfrac{1}{n}\sum_{i\in\Omega} \left\langle X_i,\Delta L+\Delta S\right\rangle^{2}\geq \left\Vert \Delta L+\Delta S\right\Vert_{L_2(\Pi)}^{2}-\mathcal{E}
\end{equation}
with an appropriate residual $\mathcal{E}$, whenever we prove that $(\Delta L,\Delta S)$ belongs to the constrained set.  This will be a substantial element of the remaining part of the proof. The result of the theorem will then be deduced by combining \eqref{rob_12} and \eqref{rob_5}.

We start by defining our constrained set.
For positive constants $\delta_1$ and $\delta_2$,  we first introduce the following set of matrices where $\Delta S$ should lie:
 \begin{equation} \label{def_B}
 \mathcal{B}(\delta_1,\delta_2)=\left \{B\in\mathbb{R}^{m_1\times m_2} \,:\,  \left\Vert B\right\Vert_{L_2(\Pi)}^{2}\leq \delta^{2}_1\;\text{and}\; \mR(B)\leq \delta_2\right \}.
 \end{equation}
 The constants $\delta_1$ and $\delta_2$ define the constraints on the $L_2(\Pi)$-norm and on the sparsity of the component $S$. The error term $\mathcal{E}$ in \eqref{rob_5} depends on $\delta_1$ and $\delta_2$. We will specify the suitable values of  $\delta_1$ and $\delta_2$ for the matrix $\Delta S$ later. Next, we define the following set of pairs of matrices: 
 \begin{equation*} 
\begin{split}
&\mathcal{D}(\tau,\kappa)=\left \{(A,B)\in\bmR \,:\, \left\Vert A+B\right\Vert_{L_2(\Pi)}^{2}\geq \sqrt{\dfrac{64\,\log(d)}{\log\left (6/5\right )\,n}},\right .\\&\left . \hskip 5cm  \left\Vert A+B\right\Vert_{\infty}\leq1,\left\Vert A\right\Vert_{*}\leq \sqrt{\tau} \left\Vert A_{\mI}\right\Vert_{2}+\kappa\right \}
\end{split}
\end{equation*}
where $\kappa$ and $\tau<m_1\wedge m_2$ are some positive constants. This will be used for $A=\Delta L$ and $B=\Delta S$. If the $L_2(\Pi)$-norm of the sum of two matrices is too small, the right hand side of \eqref{rob_5} is negative. The first inequality in the  definition of $\mathcal{D}(\tau,\kappa)$ prevents from this. Condition $\left\Vert A\right\Vert_{*}\leq \sqrt{\tau} \left\Vert A_{\mI}\right\Vert_{2}+\kappa$ is a relaxed form of  the condition  $\left\Vert A\right\Vert_{*}\leq \sqrt{\tau} \left\Vert A\right\Vert_{2}$ satisfied by matrices with rank $\tau$. We will show that, with high probability, the matrix $\Delta L$ satisfies this condition with $\tau=C\,\rank (L_0)$ and a small $\kappa$. To prove it, we need the bound $\mR(B)\leq \delta_2$ on the corrupted part.  
  
Finally, define our {\it constrained set}  as the intersection   
$$ \mathcal{D}(\tau,\kappa)\cap \left \{\mathbb{R}^{m_1\times m_2}\times  \mathcal{B}(\delta_1,\delta_2)\right \}. $$

We now return to the proof of the theorem. To prove \eqref{upper_bound}, we bound separately the norms  $ \left\Vert \Delta L\right\Vert_2$ and $ \left\Vert \Delta S\right\Vert_2$. Note that
\begin{equation}\label{xx}
\begin{split}
\Vert \Delta L\Vert_2^{2}&\leq \Vert \Delta L_{\mI}\Vert_2^{2}+\Vert \Delta L_{\tilde\mI}\Vert_2^{2}
\leq \Vert \Delta L_{\mI}\Vert_2^{2}+4\mathbf a^{2}\vert \tilde \mI\vert\\
&\leq \mu  \vert \mI\vert \Vert \Delta L_{\mI}\Vert_{L_2(\Pi)}^{2}+4\mathbf a^{2}\vert \tilde \mI\vert
\end{split}
\end{equation} 
and similarly,
\begin{equation*}\label{}
\begin{split}
\Vert \Delta S\Vert_2^{2}
&\leq \mu  \vert \mI\vert \Vert \Delta S_{\mI}\Vert_{L_2(\Pi)}^{2}+4\mathbf a^{2}\vert \tilde \mI\vert.
\end{split}
\end{equation*} 
In view of these inequalities,  it is enough to bound the quantities $\Vert \Delta S_{\mI}\Vert_{L_2(\Pi)}^{2}$ and $\Vert \Delta L_{\mI}\Vert_2^{2}$. A bound on $\Vert \Delta S_{\mI}\Vert_{L_2(\Pi)}^{2}$ with the rate as claimed in \eqref{upper_bound} is given in Lemma \ref{bound_delta_S}  below. 
 In order to bound $\Vert \Delta L_{\mI}\Vert_{L_2(\Pi)}^{2}$ (or $\Vert \Delta L_{\mI}\Vert_2^{2}$ according to cases), we will need the following argument.


\textbf{Case 1}: Suppose that $\left\Vert \Delta L+\Delta S\right\Vert_{L_2(\Pi)}^{2} < 16\mathbf{a}^{2}\sqrt{\dfrac{64\,\log(d)}{\log\left (6/5\right )\,n}}$.  Then a straightforward inequality 
\begin{equation}\label{triangle}
\Vert \Delta L+\Delta S\Vert^{2}_{L_2(\Pi)}\geq \frac{1}{2} \Vert \Delta L\Vert^{2}_{L_2(\Pi)}-\Vert \Delta S\Vert^{2}_{L_2(\Pi)}
\end{equation}
together with Lemma \ref{bound_delta_S} below implies that, with probability at least $1-2.5/d$,
\begin{equation}\label{main_3}
\left\Vert \Delta L\right\Vert^{2}_{L_2(\Pi)}\leq \Delta_{1}
\end{equation}
where 
\begin{equation*}
\begin{split}
\Delta_1&=C\Psi_4/\mu = C\left \{\mathbf{a}^{2}\,\sqrt{\dfrac{\log(d)}{n}}+\mathbf{a}\,\mR(\mathbf{Id}_{\tilde{\Omega}})\left [\ae\,\lambda_{2}+ \mathbf a\,\bE\left ( \mR^{*}(\Sigma_R)\right )\right ]\right .\\&\hskip 2.5 cm\left .+\left (\dfrac{\mathbf a\,\bE\left (\mR^{*}(\Sigma_R)\right )}{\lambda_2}+\ae\right )\frac{\vert \tilde\Omega\vert\left (\mathbf a^{2}+\sigma^{2}\log(d)\right  )}{N}
\right \}.
\end{split}
\end{equation*}
Note also that $\Psi_4\le C(\Psi_1+\Psi_2 +\Psi_3)$.
In view of \eqref{xx}, \eqref{main_3} and of fact that $\vert \mI\vert\le m_1m_2$, the bound on $\Vert\Delta L\Vert_2^{2}$ stated in the theorem holds with probability at least $1-2.5/d$.

\textbf{Case 2}: Assume now that  $\left\Vert \Delta L+\Delta S\right\Vert_{L_2(\Pi)}^{2} \geq 16\mathbf{a}^{2}\sqrt{\dfrac{64\,\log(d)}{\log\left (6/5\right )\,n}}$.  We will show that in this case and with an appropriate choice of $\delta_1,\delta_2,\tau$ and $\kappa$, the pair $\frac{1}{4\mathbf{a}}(\Delta L, \Delta S)$ belongs to the  intersection   $ \mathcal{D}(\tau,\kappa)\cap \left \{\mathbb{R}^{m_1\times m_2}\times  \mathcal{B}(\delta_1,\delta_2)\right \}$.

Lemma \ref{nuclear_bound} below and \eqref{rob_4} imply that, on the event $\mathcal{U}$,
\begin{equation}\label{condition_L}
\begin{split}
\Vert\Delta L\Vert_{*}&\leq 4\sqrt{2r}\Vert\Delta L\Vert_{2}+\frac{\lambda_2\,\mathbf{a}}{\lambda_1}\,\mR(\mathbf{Id}_{\tilde{\Omega}})+\dfrac{C\sigma^{2}\vert \tilde\Omega\vert\log(d)}{N\lambda_1}\\
&\leq 4\sqrt{2r}\Vert\Delta L_{\mI}\Vert_{2}+8\mathbf{a}\sqrt{2r\vert\tilde{\mI}\vert}+\frac{\lambda_2\,\mathbf{a}}{\lambda_1}\,\mR(\mathbf{Id}_{\tilde{\Omega}})+\dfrac{C\sigma^{2}\vert \tilde\Omega\vert\log(d)}{N\lambda_1}.
\end{split}
\end{equation}
Lemma \ref{bound_delta_S} yields that, with probability at least $1-2.5\,d^{-1}$, 
$$\dfrac{\Delta S}{4\mathbf{a}}\in \mathcal{B}\left (\dfrac{\sqrt{\Delta_1}}{4\mathbf{a}},2\mR(\mathbf{Id}_{\tilde{\Omega}})+\frac{C\vert \tilde\Omega\vert\left (\mathbf a^{2}+\sigma^{2}\log(d)\right  )}{4\mathbf{a}N\lambda_2}\right )=\bar{\mathcal{B}}.$$
This property and \eqref{condition_L} imply that 
$ \frac{1}{4\mathbf{a}}\left (\Delta L,\Delta S\right )\in \mathcal{D}(\tau,\kappa)\cap \left \{\mathbb{R}^{m_1\times m_2}\times  \bar{\mathcal{B}}\right \}$ with probability at least $1-2.5\,d^{-1}$, where 
 $$\tau=32r\quad\text{and}\quad\kappa=2\sqrt{2r\vert\tilde{\mI}\vert}+\frac{\lambda_2}{4\lambda_1}\,\mR(\mathbf{Id}_{\tilde{\Omega}})+\dfrac{C\sigma^{2}\vert \tilde\Omega\vert\log(d)}{4\mathbf{a}\,N\lambda_1}.$$
Therefore, we can apply Lemma \ref{constraind_S}(ii).
From Lemma \ref{constraind_S}(ii)
   and \eqref{rob_12} we obtain that, 
with probability at least $1-4.5\,d^{-1}$, 
\begin{equation}  \label{rob_6}
\begin{split}
\dfrac{1}{2}\Vert \Delta L+\Delta S\Vert^{2}_{L_2(\Pi)}&\leq\frac{3\,\ae\,\lambda_1}{\sqrt{2}}\sqrt{r}\left\Vert\Delta L\right\Vert_2 
+C\mathcal{E}
\end{split}
\end{equation}
where 
\begin{equation}
\begin{split}
\mathcal{E}&=\mu\,\mathbf{a}^{2}\,r\, \vert \mI\vert \left (\bE\left ( \Vert\Sigma_R\Vert\right )\right )^{2}+8\mathbf{a}^{2}\sqrt{2r\vert\tilde{\mI}\vert}\,\bE\left ( \Vert\Sigma_R\Vert\right )+\lambda_2\mR(\mathbf{Id}_{\tilde{\Omega}})\left (\dfrac{\mathbf{a}^{2}\bE\left ( \Vert\Sigma_R\Vert\right )}{\lambda_1}+\mathbf{a}\,\ae\right )\\&\hskip 1 cm + \dfrac{\vert \tilde\Omega\vert
\left (\mathbf a^{2}+\sigma^{2}\log(d)\right  )}{N}\left (\dfrac{\mathbf a\,\bE\left ( \Vert\Sigma_R\Vert\right )}{\lambda_1}+\dfrac{\mathbf a\,\bE \left (\mR^{*}(\Sigma_R)\right )}{\lambda_2}+\ae\right )+\Delta_1.
\end{split}
\end{equation}
Using an elementary argument and then \eqref{xx} we find
\begin{equation*}
\begin{split}
\frac{3\,\ae}{\sqrt{2}}\lambda_1\sqrt{r}\left\Vert\Delta L\right\Vert_2&\leq \dfrac{9\,\ae^{2}\,\mu\,m_1\,m_2\,r\,\lambda_1^{2}}{2}+\dfrac{\left\Vert\Delta L\right\Vert^{2}_2}{4\mu m_1 m_2}\\
&\leq \dfrac{9\,\ae^{2}\,\mu\,m_1\,m_2\,r\,\lambda_1^{2}}{2}+\dfrac{\left\Vert\Delta L_{\mI}\right\Vert^{2}_2}{4\mu m_1 m_2}+\dfrac{\mathbf{a}^{2}\vert \tilde{\mI}\vert}{\mu\,m_1m_2}.
\end{split} 
\end{equation*}
This inequality and \eqref{rob_6} yield
\begin{equation*} 
\begin{split}
\Vert \Delta L+\Delta S\Vert^{2}_{L_2(\Pi)}&\leq \dfrac{9\,\ae^{2}\,\mu\,m_1\,m_2\,r\,\lambda_1^{2}}{4}+\dfrac{\left\Vert\Delta L_{\mI}\right\Vert^{2}_2}{4\mu m_1 m_2}+\dfrac{\mathbf{a}^{2}\vert \tilde{\mI}\vert}{\mu\,m_1m_2} +C\mathcal{E}.
\end{split}
\end{equation*}
Using again \eqref{triangle}, Lemma \ref{bound_delta_S}, \eqref{ass1} and  the bound $\vert \mI\vert\leq m_1m_2$  we obtain
\begin{equation*} 
\begin{split}
\dfrac{\Vert \Delta L_{\mI}\Vert^{2}_2}{\mu m_1 m_2}&\leq C\left \{ \ae^{2}\,\mu\,m_1\,m_2\,r\,\lambda_1^{2}+\dfrac{\mathbf{a}^{2}\vert \tilde{\mI}\vert}{\mu\,m_1m_2} +\mathcal{E}\right \}.
\end{split}
\end{equation*}
This and the inequality $\sqrt{2r\vert\tilde{\mI}\vert}\,\bE\left ( \Vert\Sigma_R\Vert\right )\leq  \dfrac{\vert \tilde{\mI}\vert}{\mu\,m_1m_2}+\mu\,m_1m_2\,r\,\left ( \bE\left ( \Vert\Sigma_R\Vert\right )\right )^{2}$ imply that, with probability at least $1-4.5\,d^{-1}$,
\begin{equation} \label{main_2}
\begin{split}
\dfrac{\Vert \Delta L_{\mI}\Vert_2^{2}}{m_1 m_2}&\leq C\left \{ \Psi_1+ \Psi_2+\Psi_3\right \}\,.
\end{split}
\end{equation}
In view of 
  \eqref{main_2} and \eqref{xx}, $\Vert \Delta L\Vert_2^{2}$ is bounded by the right hand side of \eqref{upper_bound} with probability at least $1-4.5\,d^{-1}$. Finally, inequality \eqref{upper_bound1} follows from Lemma \ref{bound_delta_S},  \eqref{ass1} and the identity $\Delta S_{\mI}=-\hat S_{\mI}$.

\begin{lemma}\label{R_bound}
Assume that $\lambda_2\geq 4\left (\mR^{*}(\Sigma)+2\mathbf{a}\mR^{*}(W)\right )$. Then, we have
\begin{equation*}
\mR(\Delta S_{\mI})\leq 3\mR(\Delta S_{\tilde\Omega})+\frac{1}{N\lambda_2}\left [4\mathbf a^{2}\vert \tilde\Omega\vert+\sum_{i\in\tilde\Omega} \xi_i^{2}\right ]
\end{equation*}
\end{lemma}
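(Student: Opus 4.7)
The plan is to test the optimality of $(\hat L,\hat S)$ against the feasible pair $(\hat L,S_0)$, which is admissible because $\|S_0\|_{\infty}\leq \mathbf a$. Since the nuclear-norm penalty $\lambda_1\|\hat L\|_*$ is identical on both sides, only the quadratic loss and the $\mR$-penalty contribute, and expanding the squares gives, with $w_i=\langle X_i,\Delta S\rangle$,
\[
\frac{1}{N}\sum_{i=1}^{N}\bigl(w_i^{2}+2w_i\langle X_i,\Delta L\rangle+2\xi_iw_i\bigr)+\lambda_2\bigl[\mR(\hat S)-\mR(S_0)\bigr]\leq 0.
\]
Because $S_0$ is supported on $\tilde{\mI}$ (so $\hat S_{\mI}=-\Delta S_{\mI}$), the decomposability of $\mR$ with respect to $\tilde{\mI}$ combined with the reverse triangle inequality applied to $\mR(\hat S_{\tilde{\mI}})=\mR(S_0-\Delta S_{\tilde{\mI}})$ yields $\mR(\hat S)-\mR(S_0)\geq \mR(\Delta S_{\mI})-\mR(\Delta S_{\tilde{\mI}})$. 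Substituting gives an upper bound on $\lambda_2\mR(\Delta S_{\mI})$ in terms of $\lambda_2\mR(\Delta S_{\tilde{\mI}})$ plus the noise/cross-term sum.

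I would then split the remaining sum over the non-corrupted observations $\Omega$ and the corrupted ones $\tilde\Omega$. On $\Omega$, $X_i$ is supported in $\mI$ so $w_i=\langle X_i,\Delta S_{\mI}\rangle$; dropping the nonnegative $-w_i^2$, the cross-terms are bounded via $(\mR,\mR^{*})$-duality: $\tfrac{2}{N}|\sum_{i\in\Omega}\xi_iw_i|=2|\langle\Sigma,\Delta S_{\mI}\rangle|\leq 2\mR^{*}(\Sigma)\mR(\Delta S_{\mI})$, and using $|\langle X_i,\Delta L\rangle|\leq 2\mathbf a$ together with $W\geq0$ entrywise and the absoluteness of $\mR$, $\tfrac{2}{N}|\sum_{i\in\Omega}\langle X_i,\Delta L\rangle w_i|\leq 4\mathbf a\langle W,|\Delta S_{\mI}|\rangle\leq 4\mathbf a\,\mR^{*}(W)\mR(\Delta S_{\mI})$. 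On $\tilde\Omega$ the pointwise bounds $|\langle X_i,\Delta L\rangle|,|\langle X_i,\Delta S\rangle|\leq 2\mathbf a$ are key: completing the square gives $-w_i^{2}-2\xi_iw_i\leq\xi_i^{2}$, and a Young-type step $|2w_i\langle X_i,\Delta L\rangle|\leq w_i^{2}+\langle X_i,\Delta L\rangle^{2}\leq 8\mathbf a^{2}$ handles the remaining cross-term, producing the budget $\tfrac{1}{N}\sum_{i\in\tilde\Omega}\xi_i^{2}+O(\mathbf a^{2}|\tilde\Omega|/N)$. Monotonicity and absoluteness of $\mR$ then let us control the $\tilde\Omega$ contribution that is genuinely \emph{charged} to $\Delta S$ on the observed corrupted positions by $\mR(\Delta S_{\tilde\Omega})$ rather than the coarser $\mR(\Delta S_{\tilde{\mI}})$.

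Putting these estimates together gives an inequality of the form $[\lambda_2-2\mR^{*}(\Sigma)-4\mathbf a\mR^{*}(W)]\mR(\Delta S_{\mI})\leq c_1\lambda_2\mR(\Delta S_{\tilde\Omega})+c_2\tfrac{1}{N}\bigl(\mathbf a^{2}|\tilde\Omega|+\sum_{i\in\tilde\Omega}\xi_i^{2}\bigr)$ for absolute constants $c_1,c_2$. The hypothesis $\lambda_2\geq 4(\mR^{*}(\Sigma)+2\mathbf a\mR^{*}(W))$ forces the bracketed coefficient on the left to be at least $\lambda_2/2$, so dividing by $\lambda_2$ yields the stated bound. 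The main obstacle is to secure the sharp constants of the statement---especially replacing $\mR(\Delta S_{\tilde{\mI}})$ by the tighter $\mR(\Delta S_{\tilde\Omega})$, which requires a careful accounting of which corrupted positions actually appear in the observations and a judicious placement of Young-style bounds so that extra mass gets absorbed into the $\mathbf a^{2}|\tilde\Omega|$ budget rather than inflating the coefficient of $\mR(\Delta S_{\tilde\Omega})$.
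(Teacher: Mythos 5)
Your strategy is essentially the paper's: test optimality against the feasible pair $(\hat L,S_0)$, use decomposability of $\mR$ to extract $\mR(\Delta S_{\mI})-\mR(\Delta S_{\tilde\mI})$ from $\mR(\hat S)-\mR(S_0)$, split the data-fit terms over $\Omega$ and $\tilde\Omega$, bound the $\Omega$ cross-terms by $(\mR,\mR^{*})$-duality against $\mR^{*}(\Sigma)$ and, via the absoluteness of $\mR$, against $2\mathbf{a}\mR^{*}(W)$ (this is exactly the content of Lemma \ref{lem-44}), absorb the $\tilde\Omega$ terms into $\xi_i^{2}+O(\mathbf{a}^{2})$ by completing squares, and close with the condition on $\lambda_2$. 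The only cosmetic difference is that you compare objective values $\mathcal F(\hat L,\hat S)\leq\mathcal F(\hat L,S_0)$ where the paper invokes the first-order optimality condition over the convex feasible set and then the subgradient inequality; the two are interchangeable here. Your worry about hitting the exact constants is not a real obstacle: the lemma is only used downstream up to absolute constants, and the paper's own bookkeeping in fact produces a factor $2/(N\lambda_2)$ rather than the displayed $1/(N\lambda_2)$, so any bound of the form $\mR(\Delta S_{\mI})\leq 3\mR(\Delta S_{\tilde\Omega})+\tfrac{C}{N\lambda_2}\big[\mathbf a^{2}\vert\tilde\Omega\vert+\sum_{i\in\tilde\Omega}\xi_i^{2}\big]$ serves the same purpose.

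The one step you leave genuinely unresolved, passing from $\mR(\Delta S_{\tilde\mI})$ to $\mR(\Delta S_{\tilde\Omega})$, is not obtained by tracking which corrupted positions appear in the observation sums, as your last paragraph suggests: the term $\mR(\Delta S_{\tilde\mI})$ comes out of the decomposability step, not out of the quadratic or cross terms, so no bookkeeping there can shrink its support. The paper's resolution is a support identity, $\Delta S_{\tilde\mI}=\Delta S_{\tilde\Omega}$, which rests on two facts. First, by the standing convention of Section \ref{sampling}, all unobserved entries of $S_0$ are set to zero, so $(S_0)_{\tilde\mI}=(S_0)_{\tilde\Omega}$. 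Second, $\hat S$ vanishes off the observed positions: zeroing out an unobserved entry of a candidate $S$ leaves every $\langle X_i,L+S\rangle$ unchanged and, by monotonicity (equivalently, absoluteness) of $\mR$, does not increase the penalty, so the minimizer satisfies $\hat S_{\tilde\mI}=\hat S_{\tilde\Omega}$. Hence $\mR(\Delta S_{\tilde\mI})=\mR(\Delta S_{\tilde\Omega})$ exactly, and the lemma follows from what you have already established.
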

\begin{proof}
Let $\partial \Vert \cdot\Vert_{*}$, and $\partial \mathcal{R}$ denote the subdifferentials of $\Vert \cdot\Vert_{*}$ and of $\mathcal{R}$, respectively. By the standard condition for optimality over a convex set (see \cite{aubin}, Chapter 4,
Section 2, Corollary 6), we have
\begin{equation}\label{optimality}
\begin{split}
-\dfrac{2}{N}\sum^{N}_{i=1} \left (Y_i-\left\langle X_i,\hat L+\hat S\right\rangle\right )&\left\langle X_i,L+S-\hat L-\hat S\right\rangle\\&+\lambda_{1}\left\langle \partial \Vert \hat L\Vert_{*},L-\hat L\right\rangle+\lambda_{2}\left\langle \partial \mathcal{R}(\hat S),S-\hat S\right\rangle\geq 0
\end{split}
\end{equation}
for all feasible pairs $(L,S)$.
In particular, for $(\hat L,S_0)$ we obtain 
\begin{equation*}
\begin{split}
-\dfrac{2}{N}\sum^{N}_{i=1} \left (Y_i-\left\langle X_i,\hat L+\hat S\right\rangle\right )&\left\langle X_i,\Delta S\right\rangle+\lambda_{2}\left\langle \partial \mathcal{R}(\hat S),\Delta S\right\rangle\geq 0,
\end{split}
\end{equation*}
which implies
\begin{equation*}
\begin{split}
&-\dfrac{2}{N}\sum^{N}_{i=1} \left\langle X_i,\Delta S\right\rangle^{2}-\dfrac{2}{N}\sum_{i\in\tilde\Omega} \left\langle X_i,\Delta L\right\rangle\left\langle X_i,\Delta S\right\rangle-\dfrac{2}{N}\sum_{i\in\tilde\Omega} \xi_i\left\langle  X_i,\Delta S\right\rangle\\&\hskip 0.5 cm-\dfrac{2}{N}\sum_{i\in\Omega} \left\langle X_i,\Delta L\right\rangle\left\langle X_i,\Delta S\right\rangle-2\left\langle \Sigma,\Delta S\right\rangle+\lambda_{2}\left\langle \partial \mathcal{R}(\hat S),\Delta S\right\rangle\geq 0.
\end{split}
\end{equation*}
Using the elementary inequality $2ab\le a^2+b^2$ and the bound
$\Vert \Delta L\Vert_{\infty}\leq 2\mathbf a$ we find 
\begin{equation*}
\begin{split}
&-\dfrac{2}{N}\sum^{N}_{i=1} \left\langle X_i,\Delta S\right\rangle^{2}-\dfrac{2}{N}\sum_{i\in\tilde\Omega} \left\langle X_i,\Delta L\right\rangle\left\langle X_i,\Delta S\right\rangle-\dfrac{2}{N}\sum_{i\in\tilde\Omega} \xi_i\left\langle  X_i,\Delta S\right\rangle\\&\hskip 2.5 cm\leq \dfrac{1}{N}\sum_{i\in\tilde\Omega} \left\langle X_i,\Delta L\right\rangle^{2}+\dfrac{1}{N}\sum_{i\in\tilde\Omega}\xi^{2}\\&\hskip 3cm\leq \dfrac{4\mathbf a^{2}\vert \tilde\Omega\vert}{N}+\frac{1}{N}\sum_{i\in\tilde\Omega}\xi_i^{2}.
\end{split}
\end{equation*}
Combining the last two displays we get
\begin{equation}\label{robust_6}
\begin{split}
\lambda_{2}\left\langle \partial \mathcal{R}(\hat S),\hat S-S_0\right\rangle&\leq 2\left \vert\left\langle \dfrac{1}{N}\sum_{i\in \Omega} \left\langle X_i,\Delta L\right\rangle X_i,\Delta S\right\rangle\right \vert+2\left \vert\left\langle \Sigma,\Delta S\right\rangle\right \vert+\dfrac{4\mathbf a^{2}\vert \tilde\Omega\vert}{N}+\frac{1}{N}\sum_{i\in\tilde\Omega}\xi_i^{2}\\&
\leq2\mR^{*}\left (\dfrac{1}{N}\sum_{i\in\Omega} \left\langle X_i,\Delta L\right\rangle X_i\right )\mR(\Delta S)+2\mR^{*}( \Sigma)\mR(\Delta S)\\
&\hskip 0.5 cm+\dfrac{4\mathbf a^{2}\vert \tilde\Omega\vert}{N}+\frac{1}{N}\sum_{i\in\tilde\Omega}\xi_i^{2}.
\end{split}
\end{equation}
By Lemma \ref{lem-44},
\begin{equation}\label{robust_5}
\begin{split}
\mR^{*}\left (\frac{1}{N}\sum_{i\in \Omega} \left\langle X_i,\Delta L\right\rangle X_i\right )\leq2\mathbf{a}\mR^{*}(W)
\end{split}
\end{equation}
where $W=\frac{1}{N}\sum_{i\in \Omega}X_{i}$. On the other hand, the convexity of $\mR(\cdot)$ and the definition of subdifferential imply
\begin{equation}\label{robust_7}
\begin{split}
\mR(S_0)\geq \mR(\hat S)+\left\langle \partial \mathcal{R}(\hat S),\Delta S\right\rangle.
\end{split}
\end{equation}
Plugging \eqref{robust_5} and \eqref{robust_7} in \eqref{robust_6} we obtain
\begin{equation*}
\begin{split}
\lambda_{2}\left(\mR(\hat S)-\mR(S_0)\right)\leq 4\mathbf{a}\mR^{*}(W)\mR(\Delta S)+2\mR^{*}( \Sigma)\mR(\Delta S)+\dfrac{4\mathbf a^{2}\vert \tilde\Omega\vert}{N}+\frac{1}{N}\sum_{i\in\tilde\Omega}\xi_i^{2}.
\end{split}
\end{equation*}
Next, the decomposability of $\mR(\cdot)$, the identity $(S_0)_{\mI}=0$ and the triangle inequality yield
\begin{equation*}\label{robust_R}
\begin{split}
\mR(S_0-\Delta S)-\mR(S_0)&=\mR\left ((S_0-\Delta S)_{\tilde\mI}\right )+\mR\left ((S_0-\Delta S)_{\mI}\right )-\mR\left ((S_0)_{\tilde\mI}\right )\\& \geq\mR\left ((\Delta S)_{\mI}\right )-\mR\left ((\Delta S)_{\tilde\mI}\right ).
\end{split}
\end{equation*}
Since $\lambda_2\geq 4\left (2\mathbf{a}\mR^{*}(W)+\mR^{*}( \Sigma)\right )$ the last two displays imply
\begin{equation*}
\begin{split}
&\lambda_2\left (\mR\left ((\Delta S)_{\mI}\right )-\mR\left ((\Delta S)_{\tilde\mI}\right )\right )\\&\hskip 2cm\leq \dfrac{\lambda_2}{2}\left (\mR\left (\Delta S_{\tilde \mI}\right )+\mR\left ((\Delta S)_{\mI}\right )\right )+\dfrac{4\mathbf a^{2}\vert \tilde\Omega\vert}{N}+\frac{1}{N}\sum_{i\in\tilde\Omega}\xi_i^{2}.
\end{split}
\end{equation*}
Thus,
\begin{equation}\label{julio_1}
\begin{split}
&\mR\left (\Delta S_{\mI}\right )\leq 3\mR\left (\Delta S_{\tilde\mI}\right )+\frac{1}{N\lambda_2}\left [4\mathbf a^{2}\vert \tilde\Omega\vert+\sum_{i\in\tilde\Omega} \xi_i^{2}\right ].
\end{split}
\end{equation}
Since we assume that all unobserved entries of $S_0$  are zero, we have $(S_0)_{\tilde{\mI}}=(S_0)_{\tilde{\Omega}}$. On the other hand, $S_{\tilde {\mI}}=\hat S_{\tilde {\Omega}}$ as  $\mR(\cdot)$ is a monotonic norm. Indeed, 
adding to $S$ a non-zero element on the non-observed part increases $\mR(S)$ but does not modify $\frac{1}{N}\sum^{N}_{i=1} \left (Y_i-\left\langle X_i,L+S\right\rangle\right )^{2}$. To conclude, we have $\Delta S_{\tilde\mI}=\Delta S_{\tilde\Omega}$,
which together with \eqref{julio_1}, implies the  Lemma.
\end{proof}

\begin{lemma}\label{nuclear_bound}
Suppose that $\lambda_1\geq 4\Vert\Sigma\Vert$ and $\lambda_2\geq 4\mR^{*}(\Sigma)$. Then,
\begin{equation*}
\left\Vert \mathbf P_{L_0}^{\bot}(\Delta L)\right\Vert_*\leq3\left\Vert\mathbf P_{L_0}(\Delta L)\right\Vert_*+\frac{\lambda_2\,\mathbf{a}}{\lambda_1}\,\mR(\mathbf{Id}_{\tilde{\Omega}})+\frac{1}{N\lambda_1}\sum_{i\in\tilde\Omega} \xi_i^{2}.
\end{equation*}
\end{lemma}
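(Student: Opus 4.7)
The plan is to establish the cone-type inequality directly from the basic inequality $\mathcal F(\hat L,\hat S)\le\mathcal F(L_0,S_0)$, closely mirroring the strategy already used in the proofs of Theorem~\ref{thm2} and Lemma~\ref{R_bound}. The crucial new point, compared to Lemma~\ref{R_bound}, is that we now extract a bound on the nuclear-norm ``off-support'' piece $\|\mathbf P_{L_0}^{\bot}(\Delta L)\|_*$ rather than on the $\mR$-``off-support'' piece $\mR(\Delta S_{\mI})$.

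First I expand $\mathcal F(\hat L,\hat S)\le\mathcal F(L_0,S_0)$ using $Y_i-\langle X_i,\hat L+\hat S\rangle=\xi_i+\langle X_i,\Delta L+\Delta S\rangle$ and split the noise sum into the non-corrupted block (producing the stochastic term $\Sigma$) and the corrupted block, on which the inequality $2|ab|\le a^{2}+b^{2}$ absorbs the cross term $\xi_i\langle X_i,\Delta L+\Delta S\rangle$ into the square loss at a cost of $\frac{1}{N}\sum_{i\in\tilde\Omega}\xi_i^{2}$. After discarding the non-negative quadratic term on $\Omega$ this yields
\[
\lambda_1(\|\hat L\|_*-\|L_0\|_*)+\lambda_2(\mR(\hat S)-\mR(S_0))
\le 2|\langle\Sigma,\Delta L\rangle|+2|\langle\Sigma,\Delta S_{\mI}\rangle|+\frac{1}{N}\sum_{i\in\tilde\Omega}\xi_i^{2},
\]
where I have used $\langle\Sigma,\Delta S\rangle=\langle\Sigma,\Delta S_{\mI}\rangle$ (since $\Sigma$ is supported on $\mI$). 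Duality gives $|\langle\Sigma,\Delta L\rangle|\le\|\Sigma\|\,\|\Delta L\|_*$ and $|\langle\Sigma,\Delta S_{\mI}\rangle|\le\mR^*(\Sigma)\mR(\Delta S_{\mI})$, and the hypotheses $\lambda_1\ge 4\|\Sigma\|$, $\lambda_2\ge 4\mR^*(\Sigma)$ reduce the right-hand side to $\tfrac{\lambda_1}{2}\|\Delta L\|_*+\tfrac{\lambda_2}{2}\mR(\Delta S_{\mI})+\tfrac{1}{N}\sum_{i\in\tilde\Omega}\xi_i^{2}$.

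Next I invoke the two standard decomposability identities. For the nuclear norm at $L_0$ one has
\[
\|\hat L\|_*-\|L_0\|_*\ge\|\mathbf P_{L_0}^{\bot}(\Delta L)\|_*-\|\mathbf P_{L_0}(\Delta L)\|_*,\qquad \|\Delta L\|_*=\|\mathbf P_{L_0}(\Delta L)\|_*+\|\mathbf P_{L_0}^{\bot}(\Delta L)\|_*.
\]
For $\mR$, the decomposability \eqref{decomposability} with respect to $\mI$, combined with $(S_0)_{\mI}=\mathbf 0$ and the triangle inequality, yields $\mR(\hat S)-\mR(S_0)\ge\mR(\Delta S_{\mI})-\mR(\Delta S_{\tilde\mI})$. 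Substituting these in, the $\mR(\Delta S_{\mI})$ terms combine so that a positive multiple of $\mR(\Delta S_{\mI})$ can be dropped from the left, while the $\|\mathbf P_{L_0}^{\bot}(\Delta L)\|_*$ terms leave a positive coefficient of order $\lambda_1$ on the left against $3\lambda_1\|\mathbf P_{L_0}(\Delta L)\|_*$ on the right, together with the $\mR(\Delta S_{\tilde\mI})$ and $\xi_i^{2}$ residuals.

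Finally, I replace $\mR(\Delta S_{\tilde\mI})$ by an explicit quantity: as already observed at the end of the proof of Lemma~\ref{R_bound}, $\hat S$ and $S_0$ vanish on the unobserved portion of $\tilde\mI$, so $\Delta S_{\tilde\mI}=\Delta S_{\tilde\Omega}$, and since $\|\Delta S\|_{\infty}\le 2\mathbf a$ we have the entrywise bound $|\Delta S_{\tilde\Omega}|\le 2\mathbf a\,\mathbf{Id}_{\tilde\Omega}$; the monotonicity of the absolute norm $\mR$ then gives $\mR(\Delta S_{\tilde\mI})\le 2\mathbf a\,\mR(\mathbf{Id}_{\tilde\Omega})$. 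Dividing through by the coefficient of $\|\mathbf P_{L_0}^{\bot}(\Delta L)\|_*$ produces the cone inequality in the stated form. The main obstacle I anticipate is not any individual step but the bookkeeping: making the two decomposability identities line up so that the $\mR(\Delta S_{\mI})$ contributions cancel (leaving only $\mR(\Delta S_{\tilde\mI})$ on the right) and ensuring that the multiplicative constants match the ones in the statement after clearing $\lambda_1$ and $\lambda_2$.
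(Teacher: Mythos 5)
Your argument is correct and is essentially the paper's own proof: the paper obtains the identical intermediate display
$\lambda_1(\Vert\hat L\Vert_*-\Vert L_0\Vert_*)+\lambda_2(\mR(\hat S)-\mR(S_0))\le 2\Vert\Sigma\Vert\,\Vert\Delta L\Vert_*+2\mR^*(\Sigma)\mR(\Delta S_{\mI})+\frac1N\sum_{i\in\tilde\Omega}\xi_i^2$
from the first-order optimality condition \eqref{optimality} rather than from $\mathcal F(\hat L,\hat S)\le\mathcal F(L_0,S_0)$, and then finishes with the same duality and decomposability steps, the only substantive difference being that it bounds $\mR(S_0)\le\mathbf a\,\mR(\mathbf{Id}_{\tilde\Omega})$ directly instead of passing through $\mR(\hat S)-\mR(S_0)\ge\mR(\Delta S_{\mI})-\mR(\Delta S_{\tilde\mI})$ and $\mR(\Delta S_{\tilde\Omega})\le 2\mathbf a\,\mR(\mathbf{Id}_{\tilde\Omega})$ as you do. Two minor points: your claimed identity $\Vert\Delta L\Vert_*=\Vert\mathbf P_{L_0}(\Delta L)\Vert_*+\Vert\mathbf P_{L_0}^{\bot}(\Delta L)\Vert_*$ is only an inequality $\le$ in general (the two pieces need not have mutually orthogonal row and column spaces), but that is the direction you actually use; and your route yields the last two terms with constants $4\lambda_2\mathbf a/\lambda_1$ and $2/(N\lambda_1)$ rather than those displayed in the statement --- a looseness already present in the paper's own derivation and harmless, since the lemma is only invoked inside bounds with generic absolute constants.
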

\begin{proof}
Using \eqref{optimality} for $(L,S)=(L_0,S_0)$ we obtain
\begin{equation}\label{lemma2_1}
\begin{split}
&-\dfrac{2}{N}\sum^{N}_{i=1}\left\langle X_i,\Delta S+\Delta L\right\rangle^{2} -\dfrac{2}{N}\sum_{i\in\tilde\Omega} \left\langle \xi_iX_i,\Delta L+\Delta S\right\rangle\\&\hskip 0.5 cm-2\left\langle \Sigma,\left (\Delta S\right )_{\mI}\right\rangle-2\left\langle \Sigma,\Delta L\right\rangle+\lambda_{1}\left\langle \partial \Vert \hat L\Vert_{*},\Delta L\right\rangle+\lambda_{2}\left\langle \partial \mathcal{R}(\hat S),\Delta S\right\rangle\geq 0.
\end{split}
\end{equation}
The convexity of $\Vert\cdot\Vert_{*}$ and of $\mR(\cdot)$ and the definition of the subdifferential imply  
\begin{equation*}
\begin{split}
\Vert L_0\Vert_{*}&\geq \Vert\hat L\Vert_{*}+\left\langle \partial \Vert\hat L\Vert_{*},\Delta L\right\rangle\\
\mR(S_0)&\geq \mR(\hat S)+\left\langle \partial \mathcal{R}(\hat S),\Delta S\right\rangle.
\end{split}
\end{equation*}
Together with \eqref{lemma2_1}, this yields
\begin{equation*}
\begin{split}
\lambda_{1}\left (\Vert\hat L\Vert_{*}-\Vert L_0\Vert_{*}\right )+\lambda_{2}\left(\mR(\hat S)-\mR(S_0)\right)&\leq 2\Vert \Sigma \Vert\Vert \Delta L\Vert_{*}+2\mR^{*}( \Sigma)\mR\left (\Delta S_{\mI}\right )\\&\hskip 1 cm+\frac{1}{N}\sum_{i\in\tilde\Omega} \xi_i^{2}.
\end{split}
\end{equation*}
Using the conditions $\lambda_1\geq 4\Vert\Sigma\Vert$, $\lambda_2\geq 4\mR^{*}(\Sigma)$, the triangle inequality and \eqref{un2_correction}  we get 
\begin{equation*}
\begin{split}
&\lambda_1\left (\left\Vert \mathbf P_{L_0}^{\bot}(\Delta L)\right\Vert_*- \left\Vert\mathbf P_{L_0}(\Delta L)\right\Vert_*\right )+\lambda_2\left (\mR(\hat S)-\mR(S_0)\right )
\\&\hskip 0.5 cm\leq \dfrac{\lambda_1}{2}\left (\left\Vert \mathbf P_{L_0}^{\bot}(\Delta L)\right\Vert_*
+ \left\Vert\mathbf P_{L_0}(\Delta L)\right\Vert_* \right)+\dfrac{\lambda_2}{2}\mR\left (\hat S_{\mI}\right )+\frac{1}{N}\sum_{i\in\tilde\Omega} \xi_i^{2}.
\end{split}
\end{equation*}
Since we assume that all unobserved entries of $S_0$  are zero, we obtain  $\mR(S_0)\leq \mathbf{a}\mR(\mathbf{Id}_{\tilde{\Omega}})$. Using this inequality in the last display proves the lemma.
\end{proof}
\begin{lemma}\label{bound_delta_S}
Let $n>m_1$ and $\lambda_2\geq 4\left ( \mR^{*}(\Sigma)+2\mathbf{a}\mR^{*}(W)\right )$. Suppose that the distribution $\Pi$ on $\mathcal{X}'$ satisfies Assumptions \ref{assPi} and \ref{L}. Let  $ \left\Vert S_0\right\Vert_{\infty}\leq \mathbf{a}$ for some constant $\mathbf{a}$ and let Assumption \ref{noise} be satisfied.
 Then, with probability at least $1-2.5\,d^{-1}$,
\begin{equation} \label{3x}
\begin{split}
\Vert \Delta S\Vert_{L_2(\Pi)}^{2}&\leq  C \Psi_4/\mu,
\end{split}
\end{equation}
and
\begin{equation}\label{condition_S}
\begin{split}
\mR(\Delta S)&\leq 8\mathbf{a}\mR(\mathbf{Id}_{\tilde{\Omega}})+\frac{\vert \tilde\Omega\vert\left (4\mathbf a^{2}+C\sigma^{2}\log(d)\right  )}{N\lambda_2}.
\end{split}
\end{equation}

\end{lemma}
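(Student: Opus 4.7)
\textbf{Proof plan for Lemma \ref{bound_delta_S}.}

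\textbf{Step 1: Proof of the $\mR$-bound \eqref{condition_S}.} The plan is to combine Lemma \ref{R_bound} with the decomposability of $\mR$ and the constraint $\|\hat S\|_\infty\le \mathbf{a}$. First, since we have assumed that the unobserved entries of $S_0$ are zero, the support of $S_0$ is contained in $\tilde{\mI}$, hence $\Delta S_{\tilde{\mI}}=\Delta S_{\tilde{\Omega}}$ (as already noted at the end of the proof of Lemma \ref{R_bound}). Applying decomposability of $\mR$ with respect to the index set $\mI$, we obtain
\begin{equation*}
\mR(\Delta S)=\mR(\Delta S_{\mI})+\mR(\Delta S_{\tilde\mI})\le \mR(\Delta S_{\mI})+\mR(\Delta S_{\tilde\Omega}).
\end{equation*}
Lemma \ref{R_bound} bounds the first term by $3\mR(\Delta S_{\tilde\Omega})+\frac{1}{N\lambda_2}[4\mathbf{a}^2|\tilde\Omega|+\sum_{i\in\tilde\Omega}\xi_i^2]$. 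For the surviving $\mR(\Delta S_{\tilde\Omega})$ term, I will invoke the fact that $\|\Delta S\|_\infty\le 2\mathbf a$ (since both $S_0$ and $\hat S$ satisfy the sup-norm constraint) together with the monotonicity of the absolute norm $\mR$, which gives $\mR(\Delta S_{\tilde\Omega})\le 2\mathbf a\,\mR(\mathbf{Id}_{\tilde\Omega})$. Finally, on the event $\mathcal{U}$ defined in \eqref{random_event} (which holds with probability at least $1-1/(2d)$), $\sum_{i\in\tilde\Omega}\xi_i^2\le C\sigma^2|\tilde\Omega|\log d$. Assembling these pieces gives \eqref{condition_S} on $\mathcal{U}$.

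\textbf{Step 2: Proof of the $L_2(\Pi)$-bound \eqref{3x}.} Since $\pi_{jk}=0$ for $(j,k)\in\tilde{\mI}$, we have $\|\Delta S\|^2_{L_2(\Pi)}=\|\Delta S_{\mI}\|^2_{L_2(\Pi)}$, and moreover $\langle X_i,\Delta S\rangle=\langle X_i,\Delta S_{\mI}\rangle$ for every $i\in\Omega$. The plan is therefore to compare $\|\Delta S_{\mI}\|^2_{L_2(\Pi)}$ to the empirical quadratic form $\frac{1}{n}\sum_{i\in\Omega}\langle X_i,\Delta S_{\mI}\rangle^2$. For this, I will apply a standard symmetrization/contraction argument (as in \cite{klopp_general}) to the class of bounded matrices $\{A:\|A\|_\infty\le 2\mathbf a\}$, which yields a uniform inequality of the form
\begin{equation*}
\|A\|^2_{L_2(\Pi)}\le 2\cdot\frac{1}{n}\sum_{i\in\Omega}\langle X_i,A\rangle^2+C\mathbf a\,\mR(A)\,\bE\mR^*(\Sigma_R)+C\mathbf{a}^2\sqrt{\frac{\log d}{n}},
\end{equation*}
followed by Bousquet's concentration inequality to pass from expectation to high probability. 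Applied to $A=\Delta S_{\mI}$ (using $\mR(\Delta S_{\mI})\le\mR(\Delta S)$ from Step 1), this reduces the problem to controlling the empirical term.

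\textbf{Step 3: Controlling the empirical quadratic form.} To bound $\frac{1}{n}\sum_{i\in\Omega}\langle X_i,\Delta S\rangle^2$, I will use the optimality condition \eqref{optimality} with the test pair $(L,S)=(\hat L,S_0)$, exactly as in the derivation of \eqref{robust_6} inside the proof of Lemma \ref{R_bound}. Rearranging that inequality isolates a term $\frac{2}{N}\sum_{i=1}^N\langle X_i,\Delta S\rangle^2$ on the left-hand side, with the right-hand side composed of duality bounds involving $\mR^*(\Sigma)$, $\mR^*(W)$, the empirical cross-terms with $\Delta L$ (controlled on event $\mathcal{U}$ via $\|\Delta L\|_\infty\le 2\mathbf a$ and the inequality $2ab\le a^2+b^2$), the noise squared sum (bounded on $\mathcal{U}$ by $C\sigma^2|\tilde\Omega|\log d$), and the penalty difference $\lambda_2(\mR(S_0)-\mR(\hat S))$, where $\mR(S_0)\le\mathbf a\,\mR(\mathbf{Id}_{\tilde\Omega})$. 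Plugging the Step 1 bound on $\mR(\Delta S)$ into all $\mR(\Delta S)$ occurrences (and using $\lambda_2\ge 4(\mR^*(\Sigma)+2\mathbf a\mR^*(W))$), one obtains an estimate for $\frac{1}{n}\sum_{i\in\Omega}\langle X_i,\Delta S\rangle^2$ matching $C\Psi_4/\mu$ up to constants.

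\textbf{Main obstacle.} Step 1 is essentially bookkeeping. The delicate part is Step 2: establishing the uniform empirical-process inequality with the correct scaling $\mathbf{a}^2\sqrt{\log d/n}$ and the $\mR$-complexity term $\mathbf a\,\bE\mR^*(\Sigma_R)\,\mR(\Delta S)$ simultaneously. This is where the peeling-style argument and Bousquet concentration do the heavy lifting, and where the condition $n>m_1$ is used to ensure the concentration regime dominates. Once this bound is in place, the remaining algebra of balancing $\mR(\Delta S)$ against the Rademacher-type quantities $\bE\mR^*(\Sigma_R)$ and $\bE\|\Sigma_R\|$ reproduces every summand of $\Psi_4/\mu$.
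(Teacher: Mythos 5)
Your proposal follows essentially the same route as the paper's proof: part \eqref{condition_S} via Lemma \ref{R_bound} plus decomposability and the bound $\mR(\Delta S_{\tilde\Omega})\le 2\mathbf{a}\,\mR(\mathbf{Id}_{\tilde{\Omega}})$ on the event $\mathcal{U}$; the empirical quadratic form controlled by comparing objective values (the paper uses $\mathcal F(\hat L,\hat S)\le\mathcal F(\hat L,S_0)$ rather than the first-order condition \eqref{optimality}, an immaterial difference); and the passage from the empirical form to $\Vert\Delta S\Vert_{L_2(\Pi)}^2$ via the peeling/contraction/concentration bound that the paper packages as Lemma \ref{constraind_S}(i), with the small-$L_2(\Pi)$-norm case absorbed by the $\mathbf{a}^2\sqrt{\log(d)/n}$ term of $\Psi_4$. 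The plan is correct and matches the paper's argument in all essential respects.
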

\begin{proof}
Using  the inequality $\mathcal F(\hat L,\hat S)\leq \mathcal F(\hat L,S_0)$ and \eqref{model} we obtain
\begin{equation*}
\begin{split}
&\dfrac{1}{N}\sum^{N}_{i=1} \left (\left\langle X_i,\Delta L+\Delta S\right\rangle+\xi_{i}\right )^{2}+\lambda_{2}\mathcal{R}(\hat S)\\&\hskip 2 cm\leq \dfrac{1}{N}\sum^{N}_{i=1} \left (\left\langle X_i,\Delta L\right\rangle+\xi_{i}\right )^{2}+\lambda_{2}\mathcal{R}(S_0)
\end{split}
\end{equation*}
which implies
\begin{equation*}
\begin{split}
\dfrac{1}{N}\sum_{i\in \Omega} \left\langle X_i,\Delta S\right\rangle^{2}&+\dfrac{1}{N}\sum_{i\in\tilde\Omega} \left\langle X_i,\Delta S\right\rangle^{2}+\dfrac{2}{N}\sum_{i\in\tilde{\Omega}} \left\langle X_i,\Delta L\right\rangle\left\langle X_i,\Delta S\right\rangle+\dfrac{2}{N}\sum_{i\in\tilde{\Omega}} \left\langle \xi_iX_i,\Delta S\right\rangle\\&\hskip 1 cm+\dfrac{2}{N}\sum_{i\in\Omega} \left\langle X_i,\Delta L\right\rangle\left\langle X_i,\Delta S_{\mI}\right\rangle+2\left\langle \Sigma,\Delta S_{\mI}\right\rangle+\lambda_{2} \mathcal{R}(\hat S)\leq \lambda_{2} \mathcal{R}(S_0).
\end{split}
\end{equation*}
From Lemma \ref{lem-44} and  the duality between $\mR$ and $\mR^{*}$ we obtain
\begin{equation*}
\begin{split}
\dfrac{1}{N}\sum_{i\in\Omega} \left\langle X_i,\Delta S\right\rangle^{2}&\leq 2\left (2\mathbf{a}\,\mR^{*}(W)+\mR^{*}( \Sigma)\right )\mR(\Delta S_{\mI})+ \lambda_{2} \left (\mathcal{R}(S_0)-\mathcal{R}(\hat S)\right )\\&\hskip 2 cm+ \dfrac{2}{N}\sum_{i\in\tilde\Omega} \left\langle X_i,\Delta L\right\rangle^{2}+\dfrac{2}{N}\sum_{i\in\tilde\Omega}\xi^{2}.
\end{split}
\end{equation*}
Since here $\Delta S_{\mI}=-\hat S_{\mI}$ and $\lambda_2\geq 4\left (\mR^{*}(\Sigma)+2\mathbf{a}\mR^{*}(W)\right )$ it follows that
\begin{equation}\label{lemma11_1}
\begin{split}
\dfrac{1}{N}\sum_{i\in\Omega} \left\langle X_i,\Delta S\right\rangle^{2}\leq  \lambda_{2}\mR\left  ( S_0\right )+\dfrac{2}{N}\sum_{i\in\tilde\Omega} \left\langle X_i,\Delta L\right\rangle^{2}+\dfrac{2}{N}\sum_{i\in\tilde\Omega}\xi^{2}.
\end{split}
\end{equation}
Now, Lemma \ref{R_bound} and the bound $\Vert \Delta S\Vert_{\infty}\leq 2\mathbf{a}$ imply that, on the event $\mathcal{U}$ defined in \eqref{random_event},
\begin{equation}\label{lemma_S_1}
\begin{split}
\mR(\Delta S)&\leq 4\mR(\Delta S_{\tilde\Omega})+\frac{\vert \tilde\Omega\vert\left (4\mathbf a^{2}+C\sigma^{2}\log(d)\right  )}{N\lambda_2}\\&\leq 
8\mathbf{a}\mR(\mathbf{Id}_{\tilde{\Omega}})+\frac{\vert \tilde\Omega\vert\left (4\mathbf a^{2}+C\sigma^{2}\log(d)\right  )}{N\lambda_2}.
\end{split}
\end{equation}
 Thus, \eqref{condition_S} is proved. To prove \eqref{3x}, consider the following two cases. 
 
 \textbf{Case I:} $\Vert \Delta S\Vert^{2}_{L_{2}(\Pi)}<  4\mathbf a^{2}\sqrt{\frac{64\log(d)}{\log (6/5)\,n}}$.  Then  \eqref{3x} holds trivially. 
 
 \textbf{Case II:} $\Vert \Delta S\Vert^{2}_{L_{2}(\Pi)}\ge 4\mathbf a^{2}\sqrt{\frac{64\log(d)}{\log (6/5)\,n}}$. Then inequality \eqref{lemma_S_1} and the bound $\Vert \Delta S\Vert_{\infty}\leq 2\mathbf{a}$ imply that, on the event $\mathcal{U}$,
 $$\dfrac{\Delta S}{2\mathbf a}\in \mathcal{C}\left (4\,\mR(\mathbf{Id}_{\tilde{\Omega}})+\,\frac{\vert \tilde\Omega\vert\left (8\mathbf a^{2}+C\sigma^{2}\log(d)\right  )}{2\mathbf a\,N\lambda_2}\right )
 $$ 
 where, for any $\delta>0$, the set $\mathcal{C}(\delta)$ is defined as:
 \begin{equation}\label{set_constrained_S}
 \mathcal{C}(\delta)=\left \{A\in\mathbb{R}^{m_1\times m_2}:\left\Vert A\right\Vert_{\infty}\leq 1, \left\Vert A\right\Vert_{L_2(\Pi)}^{2}\geq \sqrt{\dfrac{64\,\log(d)}{\log\left (6/5\right )\,n}}, \mR(A)\leq \delta\right \}.
 \end{equation}
 Thus, we can apply Lemma \ref{constraind_S}(i) below. In view of this lemma,  the inequalities \eqref{lemma11_1}, \eqref{rob_4}, $\Vert \Delta L\Vert_{\infty}\leq 2\mathbf{a}$ and $\mR\left  (S_0\right )\leq \mathbf{a}\mR(\mathbf{Id}_{\tilde{\mI}})$ imply that \eqref{3x} holds with probability at least $1-2.5\,d^{-1}$.
\end{proof}

\begin{lemma}\label{constraind_S}
Let the distribution $\Pi$ on $\mathcal{X}'$ satisfy Assumptions \ref{assPi} and \ref{L}. Let $\delta, \delta_1, \delta_2, \tau$, and $\kappa$ be positive constants. Then, the following properties hold.
\begin{itemize}
\item[(i)] With probability at least $1-\dfrac{2}{d}$, 
\begin{equation*}
\frac{1}{n}\sum_{i\in \Omega} \left\langle X_i,S\right\rangle^{2}\geq \dfrac{1}{2}\Vert S\Vert _{L_2(\Pi)}^{2}-8\delta\bE\left ( \mR^{*}(\Sigma_R)\right )
\end{equation*}
for any  $S\in \mathcal{C}(\delta)$.
\item[(ii)] With probability at least $1-\dfrac{2}{d}$, 
\begin{equation*}
\begin{split}
\frac{1}{n}\sum_{i\in \Omega} \left\langle X_i,L+S\right\rangle^{2}&\geq \dfrac{1}{2}\Vert L+S\Vert _{L_2(\Pi)}^{2}-
\left \{360\mu\,\vert \mI\vert\,\tau\left ( \bE\left ( \Vert\Sigma_R\Vert\right )\right )^{2}\right .\\&\hskip 0.5 cm\left .+4\delta_1^{2}+8\delta_2\,\bE\left ( \mR^{*}(\Sigma_R)\right )+8\kappa\bE\left ( \Vert\Sigma_R\Vert\right )\right \}
\end{split}
\end{equation*}
for any pair $(L,S)\in \mathcal{D}(\tau,\kappa)\cap \left \{\mathbb{R}^{m_1\times m_2}\times  \mathcal{B}(\delta_1,\delta_2)\right \}$.
\end{itemize}
\end{lemma}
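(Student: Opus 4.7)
\textbf{Proof plan for Lemma~\ref{constraind_S}.} Both statements are restricted strong convexity bounds and fit the classical template combining (a) a peeling decomposition according to $\Vert\cdot\Vert_{L_2(\Pi)}^2$, (b) symmetrization with the Talagrand contraction principle to pass from the quadratic empirical process to the Rademacher process driven by $\Sigma_R$, and (c) Bousquet's form of Talagrand's concentration inequality on each slice, closed by a geometric union bound. This is the scheme used in the proof of Lemma~12 of \cite{klopp_general}, which I adapt to the present setting with two norm-constrained factors.

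For (i), set $Z_n(S) = \bigl|\tfrac{1}{n}\sum_{i\in\Omega}\langle X_i,S\rangle^2-\Vert S\Vert_{L_2(\Pi)}^2\bigr|$; the claim is equivalent to $Z_n(S)\leq \tfrac12\Vert S\Vert_{L_2(\Pi)}^2+8\delta\,\bE\mR^*(\Sigma_R)$ uniformly on $\mathcal{C}(\delta)$. Peel $\mathcal{C}(\delta)$ into slices $\mathcal{C}_\ell=\{S\in\mathcal{C}(\delta):\nu\eta^{\ell-1}\leq \Vert S\Vert_{L_2(\Pi)}^2\leq \nu\eta^\ell\}$ with $\eta=6/5$ and $\nu=\sqrt{64\log d/(\log(6/5)\,n)}$, which matches the floor built into the definition \eqref{set_constrained_S} of $\mathcal{C}$. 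On each slice, standard symmetrization followed by the contraction principle for the $2$-Lipschitz map $u\mapsto u^2$ on $[-1,1]$ (valid because $\Vert S\Vert_\infty\leq 1$ forces $|\langle X_i,S\rangle|\leq 1$) gives $\bE\sup_{\mathcal{C}_\ell}Z_n\leq 8\,\bE\sup_{\mathcal{C}_\ell}|\langle\Sigma_R,S\rangle|\leq 8\delta\,\bE\mR^*(\Sigma_R)$, the last step by the dual pairing and $\mR(S)\leq\delta$. Bousquet's inequality, applied with each centered summand bounded by $1$ and variance at most $\nu\eta^\ell$, then yields a deviation of order $\sqrt{\nu\eta^\ell\,t/n}+t/n$ at confidence $e^{-t}$. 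Taking $t=2\ell\log d$ and using the lower bound on $\nu$ absorbs the deviation into $\tfrac14\Vert S\Vert_{L_2(\Pi)}^2$; a union bound over $\ell\geq 1$ costs at most $\sum_{\ell\geq 1}d^{-2\ell}\leq 2/d$, delivering (i).

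For (ii), run the same peeling/symmetrization/contraction/concentration scheme on the class of sums $L+S$, peeling by $\Vert L+S\Vert_{L_2(\Pi)}^2$. The only new ingredient is the Rademacher bound on this product class: split $|\langle\Sigma_R,L+S\rangle|\leq \Vert\Sigma_R\Vert\,\Vert L\Vert_*+\mR^*(\Sigma_R)\mR(S)$. The $\mR$-part is controlled by $\delta_2\,\bE\mR^*(\Sigma_R)$ exactly as in (i). For the nuclear-norm part, the defining inequality $\Vert L\Vert_*\leq \sqrt{\tau}\,\Vert L_{\mI}\Vert_2+\kappa$ combined with \eqref{ass1} gives $\Vert L\Vert_*\leq \sqrt{\tau\mu\vert\mI\vert}\,\Vert L\Vert_{L_2(\Pi)}+\kappa$, and triangle inequality with $\Vert S\Vert_{L_2(\Pi)}\leq\delta_1$ yields $\Vert L\Vert_*\leq \sqrt{\tau\mu\vert\mI\vert}\bigl(\Vert L+S\Vert_{L_2(\Pi)}+\delta_1\bigr)+\kappa$. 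On the slice $\Vert L+S\Vert_{L_2(\Pi)}^2\leq\nu\eta^\ell$ this produces a cross-term of the form $\sqrt{\tau\mu\vert\mI\vert\cdot\nu\eta^\ell}\,\bE\Vert\Sigma_R\Vert$; one AM-GM application, absorbing half into $\tfrac14\Vert L+S\Vert_{L_2(\Pi)}^2$, generates the $360\mu\vert\mI\vert\tau(\bE\Vert\Sigma_R\Vert)^2$ contribution, while the residual pieces account for the additive $4\delta_1^2$, $8\delta_2\bE\mR^*(\Sigma_R)$, and $8\kappa\bE\Vert\Sigma_R\Vert$ terms.

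The main obstacle is (ii): because the peeling is done jointly on $L+S$, the $L$-component must be controlled through $\Vert L+S\Vert_{L_2(\Pi)}$ rather than directly through $\Vert L\Vert_{L_2(\Pi)}$, which is precisely where the product structure of the constraint set $\mathcal{D}(\tau,\kappa)\cap\{\bmR\times\mathcal{B}(\delta_1,\delta_2)\}$ pays off, supplying $\Vert S\Vert_{L_2(\Pi)}\leq\delta_1$. Once this coupling is handled via the triangle inequality, the remaining estimates amount to routine bookkeeping on the constants, mirroring the noiseless-corruption-free argument in \cite{klopp_general}.
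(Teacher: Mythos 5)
Your plan follows essentially the same route as the paper's proof: peeling on $\Vert\cdot\Vert_{L_2(\Pi)}^2$ with the same $\nu$ and ratio $6/5$, symmetrization plus contraction to reduce to $8\,\bE\sup|\langle\Sigma_R,A\rangle|$, the same dual-norm bound for (i), and for (ii) the same control of $\Vert L\Vert_*$ via $\sqrt{\tau}\Vert L_{\mI}\Vert_2+\kappa$, \eqref{ass1}, and the triangle inequality with $\Vert S\Vert_{L_2(\Pi)}\leq\delta_1$, closed by AM--GM and a geometric union bound. The only (cosmetic) difference is that you invoke Bousquet's inequality with a per-slice choice $t=2\ell\log d$, whereas the paper uses a Talagrand-type bound whose deviation is already proportional to the slice level $T$ with tail $\exp(-c_5 nT^2)$; both close the argument the same way.
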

\begin{proof}
We give a unified proof of (i) and (ii). Let $A=S$ for (i) and $A=L+S$ for (ii). Set
\begin{equation*}
\mathcal{E}= \left\{
     \begin{array}{lll}
                  8\delta\bE\left ( \mR^{*}(\Sigma_R)\right ) 
    & \mbox{for (i)}  \\
  360\mu\,\vert \mI\vert\,\tau\left ( \bE\left ( \Vert\Sigma_R\Vert\right )\right )^{2}+4\delta_1^{2} +8\delta_2\,\bE\left ( \mR^{*}(\Sigma_R)\right )+8\kappa\bE\left ( \Vert\Sigma_R\Vert\right )
       & \mbox{\text{for (ii)}} 
         \end{array} \right.
 \end{equation*}
 and
 \begin{equation*}
 \mathcal{C}= \left\{
      \begin{array}{lll}
                   \mathcal{C}(\delta)
     & \mbox{for (i)}  \\
     \mathcal{D}(\tau,\kappa)\cap \left (\mathbb{R}^{m_1\times m_2}\times  \mathcal{B}(\delta_1,\delta_2)\right ) 
        & \mbox{\text{for (ii).}} 
          \end{array} \right.
  \end{equation*}
To prove the lemma it is enough to show
 that the probability of the random event 
\begin{equation*}
\mathcal{B}=\left \{\exists\,A\in \mathcal{C}\,\text{such that}\,\left \vert\frac{1}{n}\sum_{i\in \Omega} \left\langle X_i,A\right\rangle^{2}-\Vert A\Vert _{L_2(\Pi)}^{2}\right \vert> \dfrac{1}{2}\Vert A\Vert _{L_2(\Pi)}^{2}+ \mathcal{E}\right \}
\end{equation*}
is smaller than $2/d$.
In order to estimate the probability of $\mathcal{B}$, we use a standard peeling argument. Set $\nu=\sqrt{\dfrac{64\,\log(d)}{\log\left (6/5\right )\,n}}$ and $\alpha=\dfrac{6}{5}$. For $l\in\mathbb N$, define $$S_l=\left \{A\in \mathcal{C}\,:\,\alpha^{l-1}\nu \leq \Vert A\Vert _{L_2(\Pi)}^{2}\leq \alpha^{l}\nu\right \}.$$ 
If the event $\mathcal{B}$ holds, there exist $l\in\mathbb N$ and a matrix $A\in \mathcal{C}\cap S_l$ such that 
\begin{equation}\label{Bl}
\begin{split}
\left \vert\frac{1}{n}\sum_{i\in \Omega} \left\langle X_i,A\right\rangle^{2}-\Vert A\Vert _{L_2(\Pi)}^{2}\right \vert&> \dfrac{1}{2}\Vert A\Vert _{L_2(\Pi)}^{2}+ \mathcal{E}\\&> \dfrac{1}{2}\alpha^{l-1}\nu+ \mathcal{E}\\&
= \dfrac{5}{12}\alpha^{l}\nu+ \mathcal{E}.
\end{split}
\end{equation}
For each $l\in\mathbb N$, consider the random event 
$$\mathcal{B}_l=\left \{\exists\,A\in \mathcal{C}'(\alpha^{l}\nu)\,:\,\left \vert\frac{1}{n}\sum_{i\in \Omega} \left\langle X_i,A\right\rangle^{2}-\Vert A\Vert _{L_2(\Pi)}^{2}\right \vert> \dfrac{5}{12}\alpha^{l}\nu+ \mathcal{E}\right \}$$
where 
$$\mathcal{C}'(T)=\left \{A\in\mathcal{C} \,:\,  \left\Vert A\right\Vert_{L_2(\Pi)}^{2}\leq T \right \}, \quad \forall T>0.
$$ 
Note that $A\in S_l$ implies that $A\in \mathcal{C}'(\alpha^{l}\nu)$. This and \eqref{Bl} grant the inclusion $\mathcal{B}\subset\cup_{l=1}^\infty \,\mathcal{B}_l$. By Lemma \ref{Z_T},
$\mathbb P\left (\mathcal{B}_l\right )\leq \exp(-c_5\,n\,\alpha^{2l}\nu^{2})$ where $c_5=1/128$. Using the union bound we find
\begin{equation*}
\begin{split}
\mathbb P\left (\mathcal{B}\right )&\leq \underset{l=1}{\overset{\infty}{\Sigma}}\mathbb P\left (\mathcal{B}_l\right )\\&\leq \underset{l=1}{\overset{\infty}{\Sigma}}\exp(-c_5\,n\,\alpha^{2l}\,\nu^{2})\\&\leq \underset{l=1}{\overset{\infty}{\Sigma}}\exp\left (-\left (2\,c_5\,n\,\log(\alpha)\,\nu^{2}\right )l\right )
\end{split}
\end{equation*} 
where we have used the inequality $e^{x}\geq x$. We finally obtain, for $\nu=\sqrt{\dfrac{64\,\log(d)}{\log\left (6/5\right )\,n}}$,
\begin{equation*}
\mathbb P\left (\mathcal{B}\right )\leq \dfrac{\exp\left (-2\,c_5\,n\,\log(\alpha)\,\nu^{2}\right )}{1-\exp\left (-2\,c_5\,n\,\log(\alpha)\,\nu^{2}\right )}=\dfrac{\exp\left (-\log(d)\right )}{1-\exp\left (-\log(d)\right )}.
\end{equation*} 
\end{proof}
Let
$$Z_T=\underset{A\in \mathcal{C}'(T)}{\sup}\left \vert\frac{1}{n}\sum_{i\in \Omega} \left\langle X_i,A\right\rangle^{2}-\Vert A\Vert _{L_2(\Pi)}^{2}\right \vert.$$ 
\begin{lemma}\label{Z_T}
Let the distribution $\Pi$ on $\mathcal{X}'$ satisfy Assumptions \ref{assPi} and \ref{L}.
Then, 
  $$\mathbb P\left (Z_T>\dfrac{5}{12}T+\mathcal{E}\right )\leq \exp(-c_5\,n\,T^{2})$$
where $c_5=\dfrac{1}{128}$.
 
\end{lemma}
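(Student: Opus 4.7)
The plan is to combine three standard empirical-process tools: bounded differences, symmetrization, and the Ledoux--Talagrand contraction principle. A preliminary observation common to both cases is that every matrix $A$ appearing in $\mathcal{C}'(T)$ satisfies $\|A\|_\infty\leq 1$---in case (i) this is built into $\mathcal{C}(\delta)$, and in case (ii) it comes from $\|L+S\|_\infty\leq 1$ in $\mathcal{D}(\tau,\kappa)$. Hence $\langle X_i,A\rangle^2\in[0,1]$ for every $A$ and $i$, and $Z_T$ viewed as a function of the independent $(X_i)_{i\in\Omega}$ has bounded differences $2/n$. McDiarmid's inequality then yields
\begin{equation*}
\P\bigl(Z_T>\bE Z_T+T/8\bigr)\leq\exp(-nT^2/128),
\end{equation*}
matching the target exponent $c_5=1/128$.

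To bound $\bE Z_T$, I would first symmetrize,
\begin{equation*}
\bE Z_T\leq 2\,\bE\sup_{A\in\mathcal{C}'(T)}\Bigl|\frac{1}{n}\sum_{i\in\Omega}\epsilon_i\langle X_i,A\rangle^2\Bigr|,
\end{equation*}
and then apply the Ledoux--Talagrand contraction principle to $\phi(t)=t^2$, which is $2$-Lipschitz on $[-1,1]$ with $\phi(0)=0$. This gives
\begin{equation*}
\bE Z_T\leq 8\,\bE\sup_{A\in\mathcal{C}'(T)}|\langle\Sigma_R,A\rangle|,
\end{equation*}
which I would then control by duality in each case. In case (i), $\mR(A)\leq\delta$ immediately yields $|\langle\Sigma_R,A\rangle|\leq\delta\,\mR^{*}(\Sigma_R)$, hence $\bE Z_T\leq 8\delta\,\bE\mR^{*}(\Sigma_R)=\mathcal{E}$, and combining with the concentration step gives $Z_T\leq T/8+\mathcal{E}\leq 5T/12+\mathcal{E}$, as required.

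Case (ii) is where the main effort lies. I would split $|\langle\Sigma_R,L+S\rangle|\leq\|\Sigma_R\|\,\|L\|_*+\mR^{*}(\Sigma_R)\,\mR(S)$, use $\mR(S)\leq\delta_2$ from $\mathcal{B}(\delta_1,\delta_2)$, and bound $\|L\|_*$ by combining the relaxed rank condition $\|L\|_*\leq\sqrt{\tau}\|L_\mI\|_2+\kappa$ from $\mathcal{D}(\tau,\kappa)$ with the triangle inequality $\|L_\mI\|_2\leq\|(L+S)_\mI\|_2+\|S_\mI\|_2$ and Assumption~\ref{assPi} in the form \eqref{ass1} (applied using $\|L+S\|^2_{L_2(\Pi)}\leq T$ and $\|S\|_{L_2(\Pi)}\leq\delta_1$), obtaining $\|L\|_*\leq\sqrt{\tau\mu|\mI|}(\sqrt{T}+\delta_1)+\kappa$. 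Two applications of Young's inequality $ab\leq a^2/(2c)+cb^2/2$---one tuned so that the $\sqrt{T}$ cross-term absorbs into a $T$-coefficient strictly smaller than $5/12-1/8$, producing a $\mu|\mI|\tau(\bE\|\Sigma_R\|)^2$ remainder, and the other splitting the $\delta_1$ cross-term into $\delta_1^2$ and $\mu|\mI|\tau(\bE\|\Sigma_R\|)^2$---then yield $\bE Z_T\leq 2T/7+\mathcal{E}$. Adding the concentration deviation gives $Z_T\leq 23T/56+\mathcal{E}\leq 5T/12+\mathcal{E}$. The delicate part of the argument is precisely this case: $\mathcal{C}'(T)$ mixes a nuclear-norm control on $L$ with an $L_2(\Pi)$-control on $L+S$ and on $S$, so one must first convert the $L_2(\Pi)$-constraint into a Euclidean bound on $L_\mI$ via Assumption~\ref{assPi} and then balance the Young constants precisely; this balancing is what forces the sizeable numerical constant $360$ in the definition of $\mathcal{E}$.
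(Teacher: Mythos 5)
Your proposal is correct and follows essentially the same route as the paper: concentration of $Z_T$ around its mean, symmetrization, the contraction principle with the $2$-Lipschitz map $t\mapsto t^2$ on $[-1,1]$ (giving the factor $8$), and then duality plus Young's inequality to absorb the $\sqrt{T}$ cross-term into the $\frac{5}{12}T$ budget, exactly as in the paper's treatment of $\Gamma_1$. The only divergence is cosmetic: you use McDiarmid's bounded-differences inequality (valid here since $\langle X_i,A\rangle^2\in[0,1]$, and yielding the exponent $nT^2/128$ with a deviation $T/8$) where the paper invokes a Talagrand-type concentration inequality with deviation $\frac{1}{9}\cdot\frac{5}{12}T$; your constant bookkeeping ($2T/7+T/8\leq 5T/12$) is consistent with, and in fact slightly sharper than, the paper's choice of $\mathcal{E}$.
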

\begin{proof}
We follow a standard approach: first we show that $Z_T$ concentrates around its expectation and then we bound from above the expectation. Since $\left\Vert A\right\Vert_{\infty}\le1$ for all $A\in \mathcal{C}'(T)$, we have $\left \vert\left\langle X_i,A\right\rangle\right \vert\leq 1$.   We use first a Talagrand type 
concentration inequality, cf. \cite[Theorem 14.2]{van_de_geer}, implying that
\begin{equation}\label{concentration}
\mathbb P\left  (Z_T\geq \bE \left ( Z_T\right )+\dfrac{1}{9}\left (\dfrac{5}{12}T\right )\right )\leq \exp\left (-c_5\,n\,T^{2}\right )
\end{equation}
where $c_5=\dfrac{1}{128}$.
Next, we bound the expectation $\bE\left ( Z_T\right )$. By a standard symmetrization argument (see e.g. \cite[Theorem 2.1]{koltchiskii-stflour}) we obtain 
\begin{equation*}
\begin{split}
\bE \left ( Z_T\right )&= \bE\left (\underset{A\in \mathcal{C}'(T)}{\sup}\left \vert\frac{1}{n}\sum_{i\in \Omega} \left\langle X_i,A\right\rangle^{2}-\bE\left (\left\langle X,A\right\rangle ^{2}\right )\right \vert\right )\\&\leq 2\bE\left (\underset{A\in \mathcal{C}'(T)}{\sup}\left \vert\dfrac{1}{n} \sum_{i\in \Omega} \epsilon_i\left\langle X_i,A\right\rangle ^{2} \right \vert\right )
\end{split}\end{equation*}
where $\{\epsilon_i\}_{i=1}^{n}$ is an i.i.d. Rademacher sequence. Then, the contraction inequality (see e.g. \cite{koltchiskii-stflour}) yields
\begin{equation*}
\bE \left ( Z_T\right )\leq 8\bE\left (\underset{A\in \mathcal{C}'(T)}{\sup}\left \vert\dfrac{1}{n} \sum_{i\in \Omega} \epsilon_i\left\langle X_i,A\right\rangle\right \vert\right )=8\bE\left (\underset{A\in \mathcal{C}'(T)}{\sup}\left \vert \left\langle \Sigma_R,A\right\rangle\right \vert\right )
\end{equation*}
where $\Sigma_R=\dfrac{1}{n} \Sum \epsilon_i X_i$.
Now, to obtain a bound on
 $\bE\left (\underset{A\in \mathcal{C}'(T)}{\sup}\left \vert \left\langle \Sigma_R,A\right\rangle\right \vert\right )$ we will consider separately the cases $ \mathcal{C}=\mathcal{C}(\delta)$ and $\mathcal{C}= \mathcal{D}(\tau,\kappa)\cap \left \{\mathbb{R}^{m_1\times m_2}\times  \mathcal{B}(\delta_1,\delta_2)\right \}$. 
 
 \textbf{Case I:} $A\in \mathcal{C}(\delta)$ and $\left\Vert A\right\Vert_{L_2(\Pi)}^{2}\leq T$. 
 By the definition of $\mathcal{C}(\delta)$ we have 
$\mR( A) \leq \delta.$
 Thus, by the duality between $\mR$ and $\mR^{*}$,  
\begin{equation*}
\bE \left ( Z_T\right )\leq 8\bE\left (\underset{\mR( A)\leq \delta }{\sup}\left \vert \left\langle \Sigma_R,A\right\rangle\right \vert\right )\leq 8\delta\,\bE\left ( \mR^{*}(\Sigma_R)\right ).
\end{equation*}
This and
 the concentration inequality \eqref{concentration} imply
$$\mathbb P\left (Z_T>\dfrac{5}{12}T+ \mathcal{E}\right )\leq \exp(-c_5\,n\,T^{2})$$
with $c_5=\dfrac{1}{128}$ and $\mathcal{E}=8\delta\,\bE\left ( \mR^{*}(\Sigma_R)\right )$ as stated.

\textbf{Case II:} $A=L+S$ where $(L,S)\in \mathcal{D}(\tau,\kappa)$, $S\in \mathcal{B}(\delta_1,\delta_2)$, and $\Vert L+S\Vert^{2}_{L_2(\Pi)}\leq T$. Then, by the definition of $\mathcal{B}(\delta_1,\delta_2)$, we have 
$
\mR(S)\leq \delta_2.
$
On the other hand, the definition of $\mathcal{D}(\tau,\kappa)$ yields
\begin{equation*}
\Vert L\Vert_{*}\leq \sqrt{\tau}\Vert L_{\mI}\Vert_{2}+\kappa
\end{equation*}
and 
\begin{equation*}
\Vert L\Vert_{L_2(\Pi)}\leq \Vert L+S\Vert_{L_2(\Pi)}+\Vert S\Vert_{L_2(\Pi)}\leq \sqrt{T}+\delta_1.
\end{equation*}
The last two inequalities imply 
\begin{equation*}
\Vert L\Vert_{*}\leq \sqrt{\mu\,\vert \mI\vert\,\tau}(\sqrt{T}+\delta_1)+\kappa:=\Gamma_{1}.
\end{equation*}
Therefore we can write
\begin{equation*}
\begin{split}
\bE\left (\underset{A\in \mathcal{C}'(T)}{\sup}\left \vert \left\langle \Sigma_R,A\right\rangle\right \vert\right )
&\leq 
8\bE\left (\underset{\Vert L\Vert_{*}\leq \Gamma_{1} }{\sup}\left \vert \left\langle \Sigma_R,L\right\rangle\right \vert+\underset{\mR( S)\leq \delta_{2} }{\sup}\left \vert \left\langle \Sigma_R,S\right\rangle\right \vert\right )\\&\hskip 0.5 cm \leq 8\left \{\Gamma_{1}\,\bE\left ( \Vert\Sigma_R\Vert\right )+\delta_{2}\,\bE\left ( \mR^{*}(\Sigma_R)\right )\right \}.
\end{split}
\end{equation*}
Combining this bound with the following elementary inequalities: 
\begin{equation*}
\begin{split}
\frac{1}{9}\left (\frac{5}{12}T\right )+8\sqrt{\mu\,\vert \mI\vert\,\tau\,T}\,\bE\left ( \Vert\Sigma_R\Vert\right )
&\leq \left (\frac{1}{9}+\frac{8}{9}\right ) \frac{5}{12}T +44\mu\,\vert \mI\vert\,\tau\left ( \bE\left ( \Vert\Sigma_R\Vert\right )\right )^{2},\\
\delta_1\sqrt{\mu\,\vert \mI\vert\,\tau}_,\bE\left ( \Vert\Sigma_R\Vert\right )&\leq \mu\,\vert \mI\vert\,\tau\left ( \bE\left ( \Vert\Sigma_R\Vert\right )\right )^{2}+\frac{\delta_1^{2}}{2}
\end{split}
\end{equation*}
and using the concentration bound \eqref{concentration} we obtain $$\mathbb P\left (Z_T>\dfrac{5}{12}T+ \mathcal{E}\right )\leq \exp(-c_5\,n\,T^{2})$$
with $c_5=\dfrac{1}{128}$ and 
\begin{equation}
\begin{split}
\mathcal{E}&=360\mu\,\vert \mI\vert\,\tau\left ( \bE\left ( \Vert\Sigma_R\Vert\right )\right )^{2}+4\delta_1^{2}+8\delta_2\,\bE\left ( \mR^{*}(\Sigma_R)\right )+8\kappa\bE\left ( \Vert\Sigma_R\Vert\right )
\end{split}
\end{equation} as stated.
\end{proof}


\subsection{Proof of Corollary \ref{upper_bound_column}}
With $\lambda_1$ and $\lambda_2$ given by \eqref{reg_parameters_column} we obtain
\begin{equation*}
\begin{split}
\Psi_1&=\mu^{2}\ae^{2}(\sigma\vee\mathbf{a})^{2}\dfrac{M\,r\,\log d}{N},\\
\Psi'_2&\leq  \mu^{2}\ae^{2}(\sigma\vee\mathbf{a})^{2}\log(d)\dfrac{\vert\tilde \Omega\vert}{N}+\dfrac{\mathbf{a}^{2}s}{m_2},\\
\Psi'_3&=\dfrac{\mu\,\ae\vert \tilde\Omega\vert
\left (\mathbf a^{2}+\sigma^{2}\log(d)\right  )}{N}+\dfrac{\mathbf{a}^{2}s}{m_2}
\\
\Psi'_4&\leq\dfrac{\mu\,\ae^{2}\vert \tilde\Omega\vert
\left (\mathbf a^{2}+\sigma^{2}\log(d)\right  )}{N}+\mathbf{a}^{2}\,\sqrt{\dfrac{\log(d)}{n}}+\dfrac{\mathbf{a}^{2}s}{m_2}.
\end{split}
\end{equation*}
\section{Proof of Theorems \ref{th:lower_group_sparse} and \ref{th:lower_sparse}}

Note that the §assumption $\ae\leq 1+s/m_2$ implies that 
\begin{equation}\label{lower_condition}
\dfrac{\vert\tilde \Omega\vert}{n}\leq \dfrac{s}{m_2}.
\end{equation}
Assume w.l.o.g. that $m_1\geq m_2$. For a $\gamma\leq 1$, define
$$
\tilde{ \mathcal{L}} \, =\Big\{ \tilde{L} = (l_{ij})\in\R^{m_1\times r}:
l_{ij}\in\Big\{0, \gamma(\sigma \wedge \mathbf{a}) \Big(\frac{
 r M}{n}\Big)^{1/2}\Big\}\,, \forall 1\leq i \leq m_1,\, 1\leq j\leq
r\Big\},
$$
and consider the associated set of block matrices
$$
\mathcal{L} \ =\ \Big\{
L=(\begin{array}{c|c|c|c}\tilde{L}&\cdots&\tilde{L}&O
\end{array})\in\R^{m_1\times m_2}: \tilde{L}\in \tilde{\mathcal{L}}\Big\},
$$
where $O$ denotes the $m_1\times (m_2-r\lfloor m_2/(2r) \rfloor )$ zero
matrix, and $\lfloor x \rfloor$ is the integer part of $x$.

We define similarly the set of matrices
$$
\tilde {\mathcal{S}} \, =\Big\{ \tilde{S}=(s_{ij})\in\R^{m_1\times s}:
s_{ij}\in\big\{0, \gamma(\sigma \wedge \mathbf{a})\big\} \,, \forall 1\leq i \leq m_1,\, 1\leq j\leq
s\Big\},
$$
and
$$
\mathcal{S}\ =\ \Big\{
S=(\begin{array}{c|c}\tilde O&\tilde{S} 
\end{array})\in\R^{m_1\times m_2}: \tilde{S} \in \tilde {\mathcal{S}}\Big\},
$$
where $\tilde O$ is the $m_1\times (m_2-s )$ zero
matrix.
We now set 
$$
\mathcal A = \left\lbrace  A = L+S\,:\, L \in \mathcal L,\, S \in \mathcal S  \right\rbrace.
$$ 

\begin{remark}
In the case $m_1< m_2$, we only need to change the construction of the low rank component of the test set. We first introduce a matrix $\tilde L = \left(\begin{array}{c|c}\bar L&O\\\end{array}\right) \in \mathbb R^{r \times m_2} $ where $\bar L \in \mathbb \mathbb R^{r \times (m_2/2)}$ with entries in $\left\{0, \gamma (\sigma \wedge \mathbf{a}) \left(\frac{ rM}{n} \right)^{1/2}\right\}$ and then we replicate this matrix to obtain a block matrix $L$ of size $m_1 \times m_2$ 
$$
L=\left(
\begin{array}{c}
\tilde{L}\\
\hline\\
\vdots\\
\hline\\
\tilde{L}\\
\hline\\
O
\end{array}
\right).
$$
\end{remark}

By construction, any element of $\mathcal{A}$ as well
as the difference of any two elements of $\mathcal{A}$ can be decomposed into a low rank component $L$ of rank at most $r$ and a group sparse component $S$ with at most $s$ nonzero columns. In addition, the entries of any matrix in
$\mathcal{A}$ take values in $[0,a]$. Thus,
$\mathcal{A}\subset {\cal A}_{GS}(r,s,\mathbf{a})$. 

 We first establish a lower bound of the order $rM/n$. Let $\tilde {\mathcal{A}}\subset \mathcal{A}$ be such that for any $A=L+S\in \tilde {\mathcal{A}}$ we have $S=\mathbf{0}$. The Varshamov-Gilbert bound (cf. Lemma 2.9 in \cite{tsy_09}) guarantees the existence of a subset $\AA^0\subset\tilde{\mathcal{A}}$ with
cardinality $\mathrm{Card}(\AA^0) \geq 2^{(rM)/8}+1$ containing the
zero $m_1\times m_2$ matrix ${\bf 0}$ and such that, for any two
distinct elements $A_1$ and $A_2$ of $\AA^0$,
\begin{equation}\label{lower_2}
\Arrowvert A_1-A_2\Arrowvert_{2}^2  \geq \frac{Mr}{8}
\left(\gamma^2(\sigma \wedge \mathbf{a})^2 \frac{Mr }{n} \right)
\left\lfloor \frac{m_2}{r}\right\rfloor \geq
\frac{\gamma^2}{16}(\sigma \wedge \mathbf{a})^2\,m_1m_2 \,\frac{Mr}{n}\,.
\end{equation}

Since $\xi_i\sim {\cal N}(0,\sigma^2)$ we get that, for any $A\in \AA^0$, the Kullback-Leibler
divergence $K\big(\P_{{\bf 0}},\P_{A}\big)$ between $\P_{{\bf 0}}$
and $\P_{A}$ satisfies
\begin{equation}\label{KLdiv}
K\big(\P_{{\bf 0}},\P_{A}\big)\ =\
\frac{\vert\Omega\vert}{2\sigma^2}\|A\|_{L_2(\Pi)}^2 \leq
\frac{\mu_1\gamma^2\,Mr}{2}
\end{equation}
where we have used Assumption \ref{marginal_sparse}.
From (\ref{KLdiv}) we deduce that the condition
\begin{equation}\label{eq: condition C}
\frac{1}{\mathrm{Card}(\AA^0)-1} \sum_{A\in\AA^0}K(\P_{\bf
0},\P_{A})\ \leq\ \frac1{16} \log \big(\mathrm{Card}(\AA^0)-1\big)
\end{equation}
is satisfied  if $ \gamma>0$ is chosen as a
sufficiently small numerical constant.  In view
of (\ref{lower_2}) and (\ref{eq: condition C}),  the application of Theorem 2.5 in \cite{tsy_09} implies
\begin{equation}\label{lower_1}
\inf_{(\hat{L},\hat S)}
\sup_{\substack{(L_0,S_0)\in\,{\cal A}_{GS}( r,s,\mathbf{a})
}}
\mathbb P_{A_0}\left (\dfrac{\Vert \hat L-L_0\Vert_2^{2}}{m_1m_2}+\dfrac{\Vert \hat S-S_0\Vert_2^{2}}{m_1m_2} > \dfrac{C(\sigma \wedge \mathbf{a})^2\,Mr}{n} \right )\ \geq\ \beta
\end{equation}
for some  absolute constants $\beta\in(0,1)$.

We now prove the lower bound relative to the corruptions. Let $\bar {\mathcal{A}}\subset \mathcal{A}$ such that for any $A=L+S\in \bar {\mathcal{A}}$ we have $L=\mathbf{0}$. The Varshamov-Gilbert bound (cf. Lemma 2.9 in \cite{tsy_09}) guarantees the existence of a subset $\AA^0\subset\bar{\mathcal{A}}$ with
cardinality $\mathrm{Card}(\AA^0) \geq 2^{(sm_1)/8}+1$ containing the
zero $m_1\times m_2$ matrix ${\bf 0}$ and such that, for any two
distinct elements $A_1$ and $A_2$ of $\AA^0$,
\begin{equation}\label{lower_3}
\Arrowvert S_1-S_2\Arrowvert_{2}^2  \geq \frac{sm_1}{8}
\left(\gamma^2(\sigma \wedge \mathbf{a})^2 \right)
 =
\frac{\gamma^2\,(\sigma \wedge \mathbf{a})^2\,s}{8m_2}\,m_1m_2.
\end{equation}
For any $A\in \AA_0$, the Kullback-Leibler
divergence between $\P_{{\bf 0}}$
and $\P_{A}$ satisfies
\begin{equation*}
K\big(\P_{{\bf 0}},\P_{A}\big)\ =\
\frac{\vert\tilde\Omega\vert}{2\sigma^2}\gamma^2(\sigma \wedge \mathbf{a})^2\leq
\frac{\gamma^2\,m_1s}{2}
\end{equation*}
which implies that  condition
\eqref{eq: condition C} is satisfied if $ \gamma>0$ is chosen small enough. Thus, applying Theorem 2.5 in \cite{tsy_09} we get
\begin{equation}\label{lower_4}
\inf_{(\hat{L},\hat S)}
\sup_{\substack{(L_0,S_0)\in\,{\cal A}_{GS}( r,s,\mathbf{a})
}}
\mathbb P_{A_0}\left (\dfrac{\Vert \hat L-L_0\Vert_2^{2}}{m_1m_2}+\dfrac{\Vert \hat S-S_0\Vert_2^{2}}{m_1m_2} > \dfrac{C(\sigma \wedge \mathbf{a})^2\,s}{m_2} \right )\ \geq\ \beta
\end{equation}
for some  absolute constant $\beta\in(0,1)$. Theorem \ref{th:lower_group_sparse} follows from inequalities \eqref{lower_condition}, \eqref{lower_1} and \eqref{lower_4}.

 The proof of Theorem \ref{th:lower_sparse} follows the same lines as that of Theorem \ref{th:lower_group_sparse}. The only difference is that we replace  $\tilde{\mathcal{S}}$ by  the following set
 $$
 \Big\{ S=(s_{ij})\in\R^{m_1\times m_2}:
 s_{ij}\in\Big\{0, \gamma(\sigma \wedge \mathbf{a}) \Big\}\,, \forall 1\leq i \leq m_1,\, \lfloor m_2/2 \rfloor +1\leq j\leq
 m_2\Big\}.
 $$
 We omit further details here. 

\section{Proof of Lemma \ref{lemma_stocastique}}
Part (i) of Lemma \ref{lemma_stocastique} is proved in Lemmas 5  and  6 in \cite{klopp_general}.

 {\it Proof of (ii)}. For the sake of brevity, we set $X_i(j,k) = \langle X_i, e_j(m_1)e_k(m_2)^{\top}\rangle$.
By definition of $\Sigma$ and $\|\cdot\|_{2,\infty}$, we have
$$
\|\Sigma\|_{2,\infty}^2 = \max_{1\leq k \leq m_2}\sum_{j=1}^{m_1} \left(  \frac{1}{N}\sum_{i \in \Omega}\xi_{i}X_i(j,k) \right)^2.
$$
For any fixed $k$, we have
\begin{align}\label{proof_lemma_sto_interm1}
\sum_{j=1}^{m_1}\left(  \frac{1}{N}\sum_{i \in \Omega}\xi_{i}X_i(j,k) \right)^2 &= 
 \frac{1}{N^2}\sum_{i_1, i_2\in\Omega}\xi_{i_1}\xi_{i_2}\sum_{j= 1}^{m_1} X_{i_1}(j,k)X_{i_2}(j,k)\notag\\
&= \Xi^\top A_k \Xi,
\end{align}
where $\Xi = (\xi_1,\cdots,\xi_n)^\top$ and $A_k\in \mathbb R^{|\Omega|\times |\Omega|}$ with entries
$$
a_{i_1 i_2}(k) = \frac{1}{N^2}\sum_{j=1}^{m_1} X_{i_1}(j,k) X_{i_2}(j,k).
$$
We freeze the $X_i$ and we apply the version of Hanson--Wright inequality in \cite{RudelsonVershynin} to get  that there exists a numerical constant $C$ such that with probability at least $1-e^{-t}$
\begin{align}\label{Hanson-wright-column}
\left| \Xi^{\top} A_k \Xi - \mathbb E[\Xi^\top A_k \Xi \vert X_i] \right| \leq  C \sigma^2 \left(  \|A_k\|_2 \sqrt{t} + \|A_k\| t \right).
\end{align}
Next, we note that
\begin{align*}
\|A_k\|_2^2 = \sum_{i_1,i_2} a_{i_1 i_2}^2(k) &\leq \frac{1}{N^4} \sum_{i_1 i_2} \left( \sum_{j_1=1}^{m_1} X_{i_1}^2 (j_1,k)  \right) \left( \sum_{j_1=1}^{m_1} X_{i_2}^2 (j_1,k) \right)\\
&\leq  \frac{1}{N^4}\left[\sum_{i_1 }\sum_{j_1=1}^{m_1} X_{i_1}^2 (j_1,k) \right]^2= \left[\frac{1}{N^2}\sum_{i_1 }\sum_{j_1=1}^{m_1} X_{i_1} (j_1,k) \right]^2,
\end{align*}
where we have used the Cauchy - Schwarz inequality in the first line and the relation $X^{2}_{i} (j,k)=X_{i} (j,k)$.

Note that $Z_i(k):= \sum_{j= 1}^{m_1} X_i(j,k)$ follows a Bernoulli distribution with parameter $\pi_{\cdot k}$ and consequently $Z(k) = \sum_{i\in \Omega} Z_{i}(k)$ follows a Binomial distribution $ B(|\Omega|,\pi_{\cdot k})$. We apply Bernstein's inequality (see, e.g., \cite{van_de_geer}, page 486) to get that, for any $t>0$,
$$
\mathbb P \left( |Z(k) - \mathbb E[ Z(k)]| \geq 2\sqrt{  |\Omega| \pi_{\cdot k} t} + t  \right)  \leq 2 e^{-t}.
$$
Consequently, we get with probability at least $1-2e^{-t}$ that
$$
\|A_k\|_2^2 \leq \left(\frac{|\Omega| \pi_{\cdot k} + 2\sqrt{|\Omega|  \pi_{\cdot k} t} + t}{N^2}\right)^2
$$
and, using $\|A_k\|\leq \|A_k\|_{2}$, that
$$
\|A_k\| \leq \frac{|\Omega| \pi_{\cdot k} + 2\sqrt{|\Omega|  \pi_{\cdot k} t} + t}{N^2}.
$$
Note also that
$$
\mathbb E[\Xi^\top A_k \Xi \vert X_i]  = \frac{\sigma^2}{N^2}Z(k). 
$$
Combining the last three displays with (\ref{Hanson-wright-column}) we get, up to a rescaling of the constants, with probability at least $1-e^{-t}$ that
$$
\sum_{j=1}^{m_1}\left(  \frac{1}{N}\sum_{i \in \Omega_r}\xi_{i}X_i(j,k) \right)^2 \leq C \frac{\sigma^2}{N^2} \left( |\Omega| \pi_{\cdot k} + 2\sqrt{|\Omega| \pi_{\cdot k} t} + t\right)(1+\sqrt{t}+t).
$$
Replacing $t$ by $t+\log m_2$ in the above display and using the union bound gives that, with probability at least $1-e^{-t}$,
\begin{equation*}
\begin{split}
\|\Sigma\|_{2,\infty} \leq C \frac{\sigma}{N} &\left( |\Omega| \pi_{\cdot k} + 2\sqrt{|\Omega| \pi_{\cdot k} (t+\log m_2)} + (t+\log m_2)\right)^{1/2}\\&\times(1+\sqrt{t+\log m_2}+t+\log m_2)^{1/2}\\
&= C \frac{\sigma}{N}\left( \sqrt{|\Omega| \pi_{\cdot k}} +  \sqrt{t+\log m_2}\right) \left (1+\sqrt{t+\log m_2}\right ).
\end{split}
\end{equation*}
Assuming that $\log m_2 \geq 1$  we get with probability at least $1-e^{-t}$ that
$$
\|\Sigma\|_{2,\infty} \leq C \frac{\sigma}{N} \left( \sqrt{ |\Omega| \pi_{\cdot k} (t+\log m_2)} + (t+\log m_2)\right).
$$
 Using \eqref{milder-marginal}, we get  that there exists a numerical constant $C>0$ such with probability at least $1 -e^{-t}$
$$
\|\Sigma\|_{2,\infty} \leq C \frac{\sigma}{N} \left( \sqrt{ \dfrac{\gamma^{1/2}n(t+\log m_2)}{m_2}} + (t+\log m_2)\right).
$$
Finally,
we use Lemma \ref{lem-tech} to obtain the required bound on $\mathbb E\|\Sigma\|_{2,\infty}$.

{\it Proof of (iii).} We follow the same lines as in the proof of part (ii) above. The only difference is to replace $\xi_i$ by $\epsilon_i$, $\sigma$ by $1$ and $N$ by $n$.

{\it Proof of (iv).}
We need to establish the bound on 
$$
\|W\|_{2,\infty}^2 = \max_{1\leq k \leq m_2}\sum_{j=1}^{m_1} \left(  \frac{1}{N}\sum_{i \in \Omega}X_i(j,k) \right)^2.
$$
For any fixed $k$, we have
\begin{align*}\label{proof_lemma_sto_interm2}
\sum_{j=1}^{m_1}\left(  \frac{1}{N}\sum_{i \in \Omega}X_i(j,k) \right)^2 &= \frac{1}{N^2}\sum_{i \in \Omega}  \sum_{j= 1}^{m_1} X_i^2(j,k) + \frac{1}{N^2}\sum_{i_1\neq i_2}\sum_{j= 1}^{m_1} X_{i_1}(j,k)X_{i_2}(j,k).\notag\\
\end{align*}
The first term on the right hand side of the last display can be written as
$$
\frac{1}{N^2}\sum_{i \in \Omega}  \sum_{j= 1}^{m_1} X_i^2(j,k) = \frac{1}{N^2}\sum_{i \in \Omega}  \sum_{j= 1}^{m_1} X_i(j,k)=\frac{Z(k)}{N^2}.
$$
Using the concentration bound on $Z(k)$ in the proof of part (ii) above, we get that, with probability at least $1-e^{-t}$,
\begin{equation}\label{W-diag term}
\frac{1}{N^2}\sum_{i \in \Omega}  \sum_{j= 1}^{m_1} X_i^2(j,k) \leq \frac{|\Omega|}{N^2} \pi_{\cdot k} + 2 \frac{\sqrt{|\Omega| \pi_{\cdot k}t}}{N^2} + \frac{t}{N^2}.
\end{equation}
Next, the random variable 
$$
U_2 = \frac{1}{N^2}\sum_{i_1\neq i_2}\sum_{j= 1}^{m_1} \left [X_{i_1}(j,k)X_{i_2}(j,k) - \pi_{j,k}^2\right ]
$$
is a U-statistic of order 2. We use now a Bernstein-type concentration inequality for U-statistics. To this end, we set $X_i(\cdot,k) = (X_i(1,k),\cdots, X_i(m_1,k))^\top$ and $$h(X_{i_1}(\cdot,k),X_{i_2}(\cdot,k)) = \sum_{j= 1}^{m_1} \left [X_{i_1}(j,k)X_{i_2}(j,k) - \pi_{j,k}^2\right ].$$ 
Let $e_0(m_1) = \mathbf{0}_{m_1}$ be the zero vector in $\R^{m_1}$. Note that $ X_i(\cdot,k)$ takes values in $ \{ e_j(m_1),\, 0\leq j \leq m_1\}$.  For any function $g\,:\, \{ e_j(m_1),\, 0\leq j \leq m_1\}^2 \rightarrow \mathbb R$, we set $\|g\|_{L^{\infty}} = \max_{0\leq j_1,j_2\leq m_1}|g(e_{j_1}(m_1),e_{j_2}(m_1))|$.

We will need the following quantities to control the tail behavior of $U_2$
\begin{align*}
\mathbf A &= \|h\|_{L^\infty}, \quad \mathbf B^2 = \max\left\{  \left\| \sum_{i_1} \mathbb E h^2(X_{i_1}(\cdot,k),\cdot)  \right\|_{L^\infty}, \left\| \sum_{i_2} \mathbb E h^2(\cdot,X_{i_2}(\cdot,k))  \right\|_{L^\infty} \right\},\\
\mathbf C &= \sum_{i_1\neq i_2} \mathbb E \left [ h^2(X_{i_1}(\cdot,k),X_{i_2}'(\cdot,k))\right  ]\quad\text{and}\\
\mathbf D &= \sup\left\{ \mathbb E \sum_{i_1\neq i_2}  h\left [X_{i_1}(\cdot,k), X_{i_2}'(\cdot,k)\right ] f_{i_1}[X_{i_1}(\cdot,k)] g_{i_2}[X_{i_2}'(\cdot,k)],\right.\\
&\hspace{2cm}\left.\mathbb E \sum_{i_1} f_{i_1}^2(X_{i_1}(\cdot,k))\leq 1, \mathbb E \sum_{i_2} g_{i_2}^2(X_{i_2}'(\cdot,k))\leq 1  \right\},
\end{align*}
where $X_i'(\cdot,k)$ are independent replications of $X_i(\cdot,k)$ and  $f$, $g\,:\, \mathbb R^{m_1} \rightarrow \mathbb R$.

We now evaluate the above quantities in our particular setting. It is not hard to see that $\mathbf A  =\max\{ \pi_{\cdot k}^{(2)} \,,\,1 - \pi_{\cdot k}^{(2)} \}\leq 1$ where $\pi_{\cdot k}^{(2)}= \sum_{j=1}^{m_1}\pi_{jk}^2 $. We also have that
\begin{align*}
\mathbf C  &= \sum_{i_1\neq i_2} \left[\mathbb E \left [ \left \langle X_{i_1}(\cdot,k),X_{i_2}'(\cdot,k)\right \rangle^2 \right ] - \left( \sum_{j=1}^{m_1}\pi_{jk}^2 \right)^2\right]\\
&= |\Omega|(|\Omega| - 1)\left[\mathbb E \left [\left  \langle X_{i_1}(\cdot,k),X_{i_2}'(\cdot,k)\right \rangle \right ] - \left( \sum_{j=1}^{m_1}\pi_{jk}^2 \right)^2\right]\\
&= |\Omega|(|\Omega| - 1) \left [\sum_{j=1}^{m_1}\pi_{jk}^2 - \left (\sum_{j=1}^{m_1}\pi_{jk}^2 \right )^2\right ] \leq |\Omega|(|\Omega| - 1)  \pi_{\cdot k}^{(2)},
\end{align*}
where we have used in the second line that $\langle X_{i_1}(\cdot,k),X_{i_2}'(\cdot,k)\rangle^2 = \langle X_{i_1}(\cdot,k),X_{i_2}'(\cdot,k)\rangle$ since $\langle X_{i_1}(\cdot,k),X_{i_2}'(\cdot,k)\rangle$ takes values in $\{0,1\}$.

We now derive a bound on $\mathbf D$. By Jensen's inequality, we get
$$
\sum_{i} \sqrt{\mathbb E \left [f_{i}^2(X_i(\cdot,k))\right ]} \leq |\Omega|^{1/2} \sqrt{\mathbb E \left [\sum_{i}f_{i}^2(X_i(\cdot,k))\right ]} \leq |\Omega|^{1/2}
$$
where we used the bound $\mathbb E \left [\sum_{i}f_{i}^2(X_i(\cdot,k))\right ]\leq 1$.
Thus,  the Cauchy-Schwarz inequality implies
\begin{align*}
\mathbf D &\leq \sum_{i_1\neq i_2} \mathbb E [h^2(X_{i_1},X_{i_2}')]  \mathbb E^{1/2} [f^{2}_{i_1}(X_{i_1}(\cdot,k))]  \mathbb E^{1/2} [g^{2}_{i_2}(X_{i_2}'(\cdot,k))]  \\
&\leq  \max_{i_1\neq i_2} \left\{\mathbb E^{1/2} [h^2(X_{i_1},X_{i_2}')]\right\} \sum_{i_1,i_2}  \mathbb E^{1/2} [f^{2}_{i_1}(X_{i_1}(\cdot,k))]  \mathbb E^{1/2} [g^{2}_{i_2}(X_{i_2}'(\cdot,k))] \\
&\leq \max_{i_1\neq i_2} \left\{\mathbb E^{1/2} [h^2(X_{i_1},X_{i_2}')]\right\}  |\Omega| \\
&\leq  |\Omega|\left( \sum_{j=1}^{m_1}\pi_{jk}^2\right)^{1/2} = |\Omega|\left[ \pi_{\cdot k}^{(2)}\right]^{1/2},
\end{align*}
where we have used the fact that $\mathbb E [h^2(X_{i_1},X_{i_2}')] \leq \sum_{j=1}^{m_1}\pi_{jk}^2$ following from an argument similar to that used to bound $\mathbf C$.

Finally, we get a bound on $\mathbf B$. Set $\pi_{0,k} = 1 - \pi_{\cdot,k}$. Note first that 
\begin{align*}
\left\| \sum_{i_1} \mathbb E h^2(X_{i_1}(\cdot,k),\cdot)  \right\|_{L^\infty} &= |\Omega| \max_{0\leq j'\leq m_1} \left\{ \sum_{j=0}^{m_1} h^{2}(e_j(m_1),e_{j'}(m_1))\pi_{jk}  \right\}\\
&\leq |\Omega| (\pi_{\cdot k}^{(2)} )^2  + |\Omega|\max_{1\leq j' \leq m_1}\pi_{j',k}.
\end{align*}
By symmetry, we obtain the same bound on $ \left\| \sum_{i_2} \mathbb E h^2(\cdot,X_{i_2}(\cdot,k))  \right\|_{L^\infty} $. Thus we have
\begin{align*}
\mathbf B&\leq |\Omega|^{1/2}\left(\pi_{\cdot k}^{(2)}  + \max_{1\leq j' \leq m_1}\pi^{1/2}_{j',k}\right).
\end{align*}

Set now
$
U_2 = \sum_{i_1\neq i_2}h(X_{i_1}(\cdot,k), X_{i_2}(\cdot,k)).
$
We apply a decoupling argument (See for instance Theorem 3.4.1 page 125 in \cite{GinePena}) to get  that there exists a constant $C>0$, such that for any $u>0$
$$
\mathbb P\left( \sum_{i_1\neq i_2}h\left (X_{i_1}(\cdot,k), X_{i_2}(\cdot,k)\right ) \geq u \right)  \leq C \mathbb P\left(  \sum_{i_1\neq i_2}h(X_{i_1}(\cdot,k), X_{i_2}^{'}(\cdot,k)) \geq u/C\right),
$$
where $X_i'(\cdot,k)$  is independent of $X_i(\cdot,k)$ and has the same distribution as $X_i(\cdot,k)$. 
Next, Theorem 3.3 in \cite{GineLatalaZinn} gives that, for any $u>0$,
$$
 \mathbb P\left(  \sum_{i_1\neq i_2}h(X_{i_1}(\cdot,k), X_{i_2}^{'}(\cdot,k)) \geq u\right) \leq  C \exp\left[ - \frac{1}{C}\min\left( \frac{u^2}{\mathbf C^2},\frac{u}{\mathbf D},\frac{u^{2/3}}{\mathbf B^{2/3}}, \frac{u^{1/2}}{\mathbf A^{1/2}}\right) \right],
$$
for some absolute constant $C>0$. 
Combining the last display with our bounds on $\mathbf A,\mathbf B,\mathbf C,\mathbf D$, we get that for any $t>0$, with probability at least $1- 2e^{-t}$, 
\begin{align*}
\left| \frac{1}{N^2}\sum_{i_1\neq i_2}\sum_{j= 1}^{m_1} X_{i_1}(j,k)X_{i_2}(j,k)\right| &\leq  \frac{|\Omega|(|\Omega| - 1)}{N^2}\pi_{\cdot k}^{(2)}
 +  \frac{C}{N^2} \left( \mathbf C t^{1/2}+ \mathbf D t + \mathbf B t^{3/2} + \mathbf A t^2 \right)\\
 &\leq \frac{|\Omega|(|\Omega| - 1)}{N^2}\pi_{\cdot k}^{(2)} + C \left[ \frac{|\Omega|(|\Omega| - 1)}{N^2}\pi_{\cdot k}^{(2)} t^{1/2}\right. \\
 &\left. \hskip -1.5 cm+  \frac{|\Omega|}{N^2}\left( \pi_{\cdot k}^{(2)} \right)^{1/2} t +    \frac{|\Omega|^{1/2}}{N^2}\left(\pi_{\cdot k}^{(2)}  + \max_{1\leq j' \leq m_1}\pi^{1/2}_{j',k}\right) t^{3/2} +\frac{ t^2}{N^2} \right],
\end{align*}
where $C>0$ is a numerical constant.
Combining the last display with (\ref{W-diag term}) we get that, for any $t>0$ with probability at least $1-3e^{-t}$,
\begin{align*}
\sum_{j=1}^{m_1}\left(\frac{1}{N} \sum_{i \in \Omega} X_i(j,k)\right)^2  &\leq \frac{|\Omega|(|\Omega| - 1)}{N^2}\pi_{\cdot k}^{(2)}  +  C \left[ \left(\frac{|\Omega|(|\Omega| - 1)}{N^2}\pi_{\cdot k}^{(2)}+ \frac{2\sqrt{|\Omega| \pi_{\cdot k}}}{N^2}\right) t^{1/2}\right. \\
 &\left.\hskip -2 cm + \frac{|\Omega|}{N^2} \pi_{\cdot k}+  \left(\frac{|\Omega|}{N^2}\left(\pi_{\cdot k}^{(2)} \right)^{1/2} + \frac{1}{N^2}\right)t +    \frac{|\Omega|^{1/2}}{N^2}\left(\pi_{\cdot k}^{(2)}  + \max_{1\leq j' \leq m_1}\pi^{1/2}_{j',k}\right) t^{3/2} +\frac{ t^2}{N^2} \right].
\end{align*}
Set $\pi_{\max} = \max_{1\leq k \leq m_2}\{\pi_{\cdot k}\}$ and $\pi^{(2)}_{\max} = \max_{1\leq k \leq m_2}\{\pi_{\cdot k}^{(2)}\}$. Using the union bound and up to a rescaling of the constants, we get that, with probability at least $1 - e^{t}$,
\begin{align*}
\|W\|_{2,\infty}^2 &\leq \frac{|\Omega|(|\Omega| - 1)}{N^2}\pi^{(2)}_{\max}   + C \left[ \left(\frac{|\Omega|(|\Omega| - 1)}{N^2}\pi^{(2)}_{\max} + \frac{2\sqrt{|\Omega| \pi_{\max}}}{N^2}  \right) (t+\log m_2)^{1/2}\right. \\
 &\left.\hskip 1.5 cm +\frac{|\Omega|}{N^2} \pi_{\max}+  \frac{|\Omega|}{N^2}\left(\pi^{(2)}_{\max} \right)^{1/2} (t+\log m_2)  \right .\\&\left .\hskip 2 cm+   \frac{|\Omega|^{1/2}}{N^2}\left(\pi^{(2)}_{\max}   + \max_{j,k}\{\pi_{jk}^{1/2}\}\right) (t+\log m_2)^{3/2} +\frac{ (t+\log m_2)^2}{N^2} \right].
\end{align*}
Recall that $\vert \Omega\vert=n$ and $\ae=N/n$. Assumption \ref{marginal_columns} and the fact that $n\leq \vert \mI\vert$ imply  that there exists a numerical constant $C>0$ such that, with probability at least $1 -e^{-t}$, 
\begin{align*}
\|W\|_{2,\infty}^2 &\leq C \left(\frac{\gamma^{2} }{\ae N m_2}\left (\sqrt{t+\log m_2}+(t+\log m_2)\sqrt{\frac{m_2}{n}}\right ) + \frac{(t+\log m_2)^{2}}{N^{2}}   \right) 
\end{align*}
where we have used that $\pi_{j,k} \leq \pi_{\cdot k} \leq \sqrt{2}\gamma/m_2$.
Finally, the bound on the expectation $\mathbb E \|W\|_{2,\infty}$ follows from this result and Lemma \ref{lem-tech}. 


\section{Proof of Lemma \ref{lemma_stochastique-sparse}}

With the notation $X_i(j,k) = \langle X_i, e_j(m_1)e_k(m_2)^{\top}\rangle$ we have
$$
\|\Sigma\|_{\infty} = \max_{1\leq j\leq m_1,1\leq k\leq m_2} \left| \frac{1}{N}\sum_{i \in \Omega} \xi_i X_i(j,k) \right|.
$$
Under Assumption \ref{noise}, the Orlicz norm $\|\xi_i\|_{\psi_2}= \inf\{x>0: \mathbb E[(\xi_i/x)^2] \le e\}$ satisfies $\|\xi_i\|_{\psi_2} \leq c\sigma$ for some numerical constant $c>0$ and all $i$.  This and the relation (See Lemma 5.5 in \cite{vershynin}\footnote{this statement actually appears as an intermediate step in the proof of this lemma.})
$$
\mathbb E[|\xi_i|^\ell] \leq \frac{\ell}{2} \Gamma\left(\frac{\ell}{2}\right) \|\xi_i\|_{\psi_2}^\ell,\quad \forall \ell\geq 1,
$$
imply that $N^{-\ell}\mathbb E[|\xi_i|^\ell X_i^\ell(j,k)] = N^{-\ell} \mathbb E[X_i(j,k)] \mathbb E[|\xi_i|^\ell]\le (\ell !/2) c^2v(c\sigma/N)^{\ell-2}$ for all $\ell\geq 2$ and $v=\frac{\sigma^2\mu_1}{N^2 m_1m_2}$, where we have used the independence between $\xi_i$ and $X_i$, and Assumption \ref{marginal_sparse}.
Thus, for any fixed $(j,k)$, we have 
$$
\sum_{i\in \Omega}\mathbb E \left[  \frac{1}{N^2} \xi_i^2  X_i^2(j,k) \right] \leq |\Omega|\frac{c^2\sigma^2\mu_1}{N^2 m_1m_2}=\frac{c^2\mu_1  \sigma^2}{\ae N m_1m_2} =:v_1,
$$
and
$$
\sum_{i\in \Omega}\mathbb E \left[  \frac{1}{N^\ell} |\xi_i|^\ell  X_i^\ell(j,k) \right] \leq \frac{\ell!}{2}v_1\left(\frac{c\sigma}{ N}\right)^{\ell-2}.
$$
Thus, we can apply Bernstein's inequality (see, e.g. \cite{van_de_geer}, page 486), which yields
$$
\mathbb P \left( \left| \frac{1}{N}\sum_{i \in \Omega}\xi_i X_i(j,k)  \right| > C\left(\sqrt{ \frac{\mu_1  \sigma^2 t}{\ae N m_1m_2}} + \frac{\sigma t}{N}\right)\right) \leq 2e^{-t}
$$
for any fixed $(j,k)$. Replacing here $t$ by $t+\log(m_1m_2)$ and using the union bound we obtain
$$
\mathbb P \left( \left\| \Sigma \right\|_{\infty} > C\left(\sqrt{ \frac{\mu_1  (t+\log(m_1m_2))}{\ae N m_1m_2}} + \frac{(t+\log(m_1m_2))}{N}\right)\right) \leq 2e^{-t}.
$$
The bound on $\mathbb E[\|\Sigma\|_{\infty}]$ in the statement of Lemma~\ref{lemma_stochastique-sparse} follows from this inequality and Lemma~\ref{lem-tech}.
The same argument proves the bounds on $\|\Sigma_R\|_{\infty}$ and $\mathbb E  \|\Sigma_R\|_{\infty}$ in the statement of Lemma~\ref{lemma_stochastique-sparse}.
By a similar (and even somewhat simpler) argument, we also get that
$$
\mathbb P \left( \left\| W - \mathbb E[ W] \right\|_{\infty} >  C\left(\sqrt{ \frac{\mu_1   (t+\log(m_1m_2))}{\ae N m_1m_2}} + \frac{t+\log(m_1m_2)}{ N}\right) \right) \leq 2e^{-t}
$$
while Assumption \ref{marginal_sparse} implies that $\|\mathbb E[W]\|_{\infty} \leq \frac{\mu_1}{\ae m_1m_2}$.

\section{Technical Lemmas}

\begin{lemma}\label{lem-tech}
Let $Y$ be a non-negative random variable. Let there exist $A\geq 0$, and $a_j>0$, $\alpha_j>0$ for $1\leq j\leq m$,  such that
$$
\mathbb P(Y> A + \sum_{j=1}^m a_j t^{\alpha_j}) \leq e^{-t},\quad \forall t>0.
$$
Then 
$$
\mathbb E [Y] \leq A  + \sum_{j=1}^m a_j \alpha_j \Gamma(\alpha_j),
$$
where $\Gamma(\cdot)$ is the Gamma function.
\end{lemma}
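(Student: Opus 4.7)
The plan is to combine the layer-cake identity for the expectation of a non-negative random variable with a change of variables that turns the hypothesized tail bound into a sum of Gamma integrals.

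First, since $Y \geq 0$, I would apply the layer-cake formula, split at $y = A$, and substitute $u = y - A$ to get
\[
\mathbb{E}[Y] = \int_0^\infty \mathbb{P}(Y > y)\,dy \leq A + \int_0^\infty \mathbb{P}(Y > A + u)\,du,
\]
where on $[0,A]$ we use the trivial bound $\mathbb{P}(Y > y) \leq 1$.

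Next I would introduce $\varphi(t) := \sum_{j=1}^m a_j t^{\alpha_j}$. Because each $a_j, \alpha_j > 0$, the function $\varphi$ is continuous and strictly increasing on $[0,\infty)$ with $\varphi(0)=0$, $\varphi(t)\to\infty$, and derivative $\varphi'(t) = \sum_{j=1}^m a_j \alpha_j t^{\alpha_j - 1}$, so it is a $C^1$ bijection of $[0,\infty)$ onto itself. Performing the change of variable $u = \varphi(t)$ in the residual integral and then using the tail hypothesis yields
\[
\int_0^\infty \mathbb{P}(Y > A + u)\,du = \int_0^\infty \mathbb{P}\bigl(Y > A + \varphi(t)\bigr)\,\varphi'(t)\,dt \leq \int_0^\infty e^{-t}\sum_{j=1}^m a_j \alpha_j t^{\alpha_j - 1}\,dt.
\]
Interchanging the sum and the integral (all terms are non-negative, so Tonelli applies) and recognising each resulting integral as $\Gamma(\alpha_j) = \int_0^\infty t^{\alpha_j - 1} e^{-t}\,dt$ gives the bound $\sum_{j=1}^m a_j \alpha_j \Gamma(\alpha_j)$. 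Adding back the constant $A$ produces the claimed inequality.

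There is no real obstacle here; the only fine points to check are that $\varphi'$ is integrable against $e^{-t}$ near $t=0$, which follows from $\alpha_j > 0$ (equivalently, $\Gamma(\alpha_j) < \infty$), and that the assumption $\mathbb{P}(Y > A + \varphi(t)) \leq e^{-t}$ trivially extends to $t = 0$ since $e^0 = 1 \geq \mathbb{P}(Y > A)$.
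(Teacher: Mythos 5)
Your proof is correct and follows essentially the same route as the paper's: the layer-cake formula, the trivial bound $\mathbb{P}(Y>y)\leq 1$ on $[0,A]$, and the change of variable $u=\sum_j a_j t^{\alpha_j}$ reducing the tail integral to a sum of Gamma integrals. The additional remarks on the bijectivity of $\varphi$ and integrability near $t=0$ are fine points the paper leaves implicit.
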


\begin{proof}
Using the change of variable
$
u = \sum_{j=1}^m a_j v^{\alpha_j}
$
we get
\begin{align*}
\mathbb E [Y] &= \int_{0}^{\infty}\mathbb P(Y>t)dt 
\leq A +  \int_{0}^{\infty}\mathbb P(Y>A + u)du\\
 &= A+ \int_{0}^{\infty}\mathbb P(Y>A + \sum_{j=1}^m a_j v^{\alpha_j})\left( \sum_{j=1}^m a_j \alpha_j v^{\alpha_j-1}\right) dv\\
&\leq A+ \int_{0}^{\infty}\left( \sum_{j=1}^m a_j \alpha_j v^{\alpha_j-1}\right)e^{-v} dv = A+ \sum_{j=1}^m a_j \alpha_j \Gamma(\alpha_j).
\end{align*}
\end{proof}

\begin{lemma}\label{lem-44}
Assume that $\mR$ is an absolute norm. Then
\begin{equation*}
\mR^{*}\left (\frac{1}{N}\sum_{i\in \Omega} \left\langle X_i,\Delta L\right\rangle X_i\right )\leq2\mathbf{a}\mR^{*}(W)
\end{equation*}
where $W=\frac{1}{N}\sum_{i\in \Omega}X_{i}$.
\end{lemma}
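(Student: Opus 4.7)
\medskip

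\noindent\textbf{Proof proposal for Lemma \ref{lem-44}.} The plan is to reduce the matrix $\frac{1}{N}\sum_{i\in\Omega}\langle X_i,\Delta L\rangle X_i$ to a Hadamard product with $W$, and then exploit the monotonicity of $\mR^{*}$ inherited from the fact that $\mR$ is absolute.

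First I would unpack what each summand is. Since $X_i \in \mathcal{X}$, we have $X_i = e_{j_i}(m_1)e_{k_i}^{T}(m_2)$ for some indices $(j_i,k_i)$, so $\langle X_i,\Delta L\rangle = (\Delta L)_{j_i k_i}$ and $\langle X_i,\Delta L\rangle X_i$ is the matrix whose only nonzero entry is $(\Delta L)_{j_i k_i}$ at position $(j_i,k_i)$. Summing over $i\in\Omega$ and dividing by $N$ gives
\begin{equation*}
\frac{1}{N}\sum_{i\in \Omega} \langle X_i,\Delta L\rangle X_i \;=\; W \odot \Delta L,
\end{equation*}
where $\odot$ is the entrywise (Hadamard) product and $W_{jk}=\frac{1}{N}|\{i\in\Omega : X_i=e_j(m_1)e_k^{T}(m_2)\}|\geq 0$. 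Since $\|\Delta L\|_{\infty}\leq \|L_0\|_{\infty}+\|\hat L\|_{\infty}\leq 2\mathbf{a}$ and $W_{jk}\geq 0$, we obtain the entrywise bound
\begin{equation*}
|W \odot \Delta L| \;\leq\; 2\mathbf{a}\,W.
\end{equation*}

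Next I would verify that $\mR^{*}$ is itself an absolute (hence monotonic) norm. This is a standard consequence of $\mR$ being absolute: for any matrix $A$ and any $B$ with $\mR(B)\leq 1$, one has $\langle A,B\rangle \leq \langle |A|,|B|\rangle$ and $\mR(|B|)=\mR(B)\leq 1$, giving $\mR^{*}(A)\leq \mR^{*}(|A|)$; conversely, choosing $B^{*}\geq 0$ that attains $\mR^{*}(|A|)$ and setting $B=\mathrm{sign}(A)\odot B^{*}$ yields the reverse inequality. Hence $\mR^{*}(A)=\mR^{*}(|A|)$, and in particular $\mR^{*}$ is monotonic with respect to entrywise ordering of absolute values.

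Combining the two, monotonicity of $\mR^{*}$ applied to $|W \odot \Delta L|\leq 2\mathbf{a}\,W$ (both matrices have nonnegative entries) gives
\begin{equation*}
\mR^{*}\!\left(\frac{1}{N}\sum_{i\in \Omega} \langle X_i,\Delta L\rangle X_i\right) \;=\; \mR^{*}(W\odot \Delta L)\;\leq\; \mR^{*}(2\mathbf{a}\, W)\;=\;2\mathbf{a}\,\mR^{*}(W),
\end{equation*}
which is the claimed bound. The only genuinely nontrivial ingredient is the absoluteness (equivalently, monotonicity) of the dual norm $\mR^{*}$; everything else is a direct computation using the special structure of $X_i\in\mathcal{X}$.
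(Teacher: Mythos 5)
Your proof is correct and rests on the same two ingredients as the paper's: the entrywise bound $\vert\langle X_i,\Delta L\rangle\vert\leq\Vert\Delta L\Vert_{\infty}\leq 2\mathbf{a}$ together with the absoluteness of $\mR$, which lets one pass to absolute values inside the dual pairing. The only cosmetic difference is that you factor out the monotonicity of $\mR^{*}$ (via the Hadamard-product identification with $W\odot\Delta L$) as an explicit intermediate step, whereas the paper performs the same manipulation inline within the supremum defining $\mR^{*}$.
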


\begin{proof}
In view of the definition of $\mR^{*}$,
\begin{equation*}\label{robust_4}
\begin{split}
\frac1{2\mathbf{a}}\mR^{*}\left (\dfrac{1}{N}\sum_{i\in \Omega} \left\langle X_i,\Delta L\right\rangle X_i\right )&=\underset{\mR(B)\leq 1}{\sup}\left \langle\dfrac{1}{N}\sum_{i\in \Omega} \frac{\left\langle X_i,\Delta L\right\rangle}{2\mathbf{a}}X_i,B\right \rangle\\
&\le \underset{\mR(B')\leq 1} {\sup}\left \langle\dfrac{1}{N}\sum_{i\in \Omega}  X_i,B'\right \rangle = \mR^{*}(W),
\end{split}
\end{equation*}
where we have used the inequalities $\left\langle X_i,\Delta L\right\rangle \le \Vert\Delta L\Vert_{\infty}\leq 2\mathbf{a}$, and the fact that $\mR$ is an absolute norm. 
\end{proof}

\section*{Acknowledgement}
The work of O. Klopp was conducted as part of the project Labex MME-DII (ANR11-LBX-0023-01). The work of K. Lounici was supported in part by Simons Grant 315477 and by NSF Career Grant DMS-1454515.
The work of A.B.Tsybakov was supported by GENES and by the French National Research Agency (ANR) under the grants 
IPANEMA (ANR-13-BSH1-0004-02), Labex ECODEC (ANR - 11-LABEX-0047), ANR -11- IDEX-0003-02, and by the "Chaire Economie et Gestion des Nouvelles Donn\'ees", under the auspices of Institut Louis Bachelier, Havas-Media and Paris-Dauphine.
\\

%


\end{document}